\newtheorem{theorem}{Theorem}[section]
\newtheorem{lemma}[theorem]{Lemma}
\newtheorem{Proposition}[theorem]{Proposition}
\newtheorem{Remark}{Remark}
\numberwithin{equation}{section} \allowdisplaybreaks
\def\r3{\mathbb{R}^3}
\begin{document}
\title[Optimal large time behavior of the compressible BNSP system] {Optimal large time behavior of the
compressible Bipolar Navier--Stokes--Poisson system with unequal
viscosities}
\author{Qing Chen}
\address{Qing Chen \newline School of Applied Mathematics\\
Xiamen University of Technology\\
Xiamen, Fujian 361024, China} \email{chenqing@xmut.edu.cn}

\author{Guochun Wu}
\address{Guochun Wu \newline Fujian Province University Key Laboratory of Computational
Science,School of Mathematical Sciences, Huaqiao University,
Quanzhou 362021, China} \email{guochunwu@126.com}

\author{Yinghui Zhang*}
\address{Yinghui Zhang \newline School of Mathematics and Statistics, Guangxi Normal
University, Guilin, Guangxi 541004, China}
\email{yinghuizhang@mailbox.gxnu.edu.cn}

\thanks{* Corresponding author:
yinghuizhang@mailbox.gxnu.edu.cn} \keywords{Bipolar Compressible
Navier--Stokes--Poisson System; unequal viscosities; optimal time
decay rates} \subjclass[2010]{76B03, 35Q35}

\begin{abstract}
 This paper is concerned with the Cauchy problem of the 3D compressible bipolar
 Navier--Stokes--Poisson (BNSP) system with unequal viscosities, and our main purpose
 is three--fold: First, under the assumption that $H^l\cap L^1$($l\geq 3$)--norm of
 the initial data is small, we prove the optimal time decay rates of the solution as well as
 its all--order spatial derivatives from one--order to the highest--order, which are the same
 as those of the compressible Navier--Stokes equations and the heat equation. Second, for
 well--chosen initial data, we also show the lower bounds on the decay rates. Therefore,
 our time decay rates are optimal. Third, we give the explicit influences of the electric field
 on the qualitative behaviors of solutions, which are totally new as compared to the results for the compressible unipolar Navier--Stokes--Poisson(UNSP) system [Li et al., in Arch. Ration. Mech. Anal., {196} (2010), 681--713; Wang, J. Differ. Equ., { 253} (2012), 273--297]. More precisely, we show that the densities of the BNSP system converge to their corresponding equilibriums at the same $L^2$--rate $(1+t)^{-\frac{3}{4}}$ as the compressible Navier--Stokes equations, but the momentums of the BNSP system and the difference between two densities decay at the $L^2$--rate $(1+t)^{-\frac{3}{2}(\frac{1}{p}-\frac{1}{2})}$ and $(1+t)^{-\frac{3}{2}(\frac{1}{p}-\frac{1}{2})-\frac{1}{2}}$ with $1\leq p\leq \frac{3}{2}$, respectively, which depend directly on the initial low frequency assumption of electric field, namely, the smallness of $\|\nabla \phi_0\|_{L^p}$.
\end{abstract}

\maketitle


\section{Introduction}

At high temperature and velocity, ions and electrons in a plasma tend to become two separate fluids due to their different physical properties (inertia, charge). One of the fundamental fluid models for describing plasma dynamics is the two--fluid model, in which two compressible ion and electron fluids penetrate each other through their own self--consistent electromagnetic field. In this paper, we formally take the magnetic field equal to zero and study the optimal time decay rate for the global classical solution to the bipolar Navier--Stokes--Poisson (BNSP) system in 3D:
\begin{equation}\label{1eq}
 \left\{\begin{array}{lll}
  \partial_t \rho_1 + {\rm div} m_1 =0,
  \\
  \partial_t m_1 + {\rm div}\left(\frac{m_1\otimes m_1}{\rho_1}\right)+\nabla P_1 =
  \mu_1 \Delta \left(\frac{m_1}{\rho_1}\right) +\nu_1\nabla {\rm div}\left(\frac{m_1}{\rho_1}\right) + Z\rho_1\nabla \phi,
  \\
  \partial_t \rho_2 + {\rm div} m_2=0,
  \\
  \partial_t m_2 + {\rm div}\left(\frac{m_2\otimes m_2}{\rho_2}\right)+\nabla P_2 =
  \mu_2 \Delta \left(\frac{m_2}{\rho_2}\right) + \nu_2\nabla {\rm div}\left(\frac{m_2}{\rho_2}\right) -\rho_2\nabla \phi ,
  \\
  \Delta \phi =Z\rho_1 - \rho_2, \quad \lim\limits_{|x|\rightarrow\infty} \phi(x, t) =0
 \end{array}\right.
\end{equation}
for $(x, t)\in \mathbb{R}^3 \times \mathbb{R}^+$ and with initial data
\begin{equation}\label{1id}
 (\rho_1, m_1, \rho_2, m_2, \phi)(x,0) = (\rho_{10}, m_{10}, \rho_{20}, m_{20}, \phi_0)(x) \rightarrow \left(\frac1Z, 0, 1, 0, 0\right) \quad as\quad |x|\rightarrow \infty.
\end{equation}
The (BNSP) system describes a plasma composed of ions and electrons.
The unknown functions are density $\rho_1$ and momentum $m_1$ of the
ions, density $\rho_2$ and momentum $m_2$ of the electrons, and the
electrostatic potential $\phi$. The positive constant $Z$ represents
the charge of the ions. For $i=1,2$, the viscosity coefficients
$\mu_i>0$ and $\nu_i$ satisfy $3\mu_i +2 \nu_i>0$, and the pressure
functions $P_i$ of the two fluids terms of $\rho_i$ are smooth
functions and satisfy $P_i'(\rho_i)>0$ if $\rho_i>0$. Without loss
of generality, we assume other physical parameters to be $1$.

Owing to the physical importance and mathematical challenges, there
is an extensive literature on the long time behavior of global
smooth solutions to the Navier--Stokes--Poisson (NSP) system. If
only considering the dynamics of one fluid in plasmas, then the
system \eqref{1eq} reduces to the unipolar Navier--Stokes--Poisson
(UNSP) system. For the UNSP system, Li--Matsumura--Zhang \cite{LMZ}
proved that the density of the NSP system converges to its
equilibrium state at the same $L^2$--rate $(1+t)^{-\frac34}$ as the
compressible Navier--Stokes (NS) system, but the momentum of the
UNSP system decays at the $L^2$--rate $(1+t)^{-\frac14}$, which is
slower than the $L^2$--rate $(1+t)^{-\frac34}$ for the NS system.
And then they extended similar result to the non--isentropic case
\cite{ZLZ}. Wu--Wang \cite{WW1} investigated the pointwise estimates
of the solution and showed the pointwise profile of the solution
contains the D--wave but does not contain the H--wave, which is
different from the Navier--Stokes system. Wang \cite{W1} obtained
the optimal asymptotic decay of solutions just by pure energy
estimates. Hao--Li \cite{HL}, Tan--Wu \cite{TW}, Chikami--Danchin
\cite{CD} and Bie--Wang--Yao \cite{BWY} also established the unique
global solvability and the optimal decay rates for small
perturbations of a linearly stable constant state. We mention that
there are many results on the existence and long time behavior of
the weak solutions or non--constant stationary solutions, see, for
example \cite{Bella1, Donatelli1, Ducomet1, Tan3, ZhangY} and the
references therein. We also mention that the quasi--neutral
phenomenon to the UNSP system are studied in \cite{Donatelli2, Ju,
Wangsu}.

\par
For the BNSP system \eqref{1eq},  there are very few results due to its non--conservative structure and the interaction of two fluids through the electric field. Duan--Yang \cite{DY} proved global well--posedness and asymptotic behavior of smooth solutions for the Cauchy problem in one
dimension, and Zhou--Li \cite{ZL} obtained the corresponding convergence rate. Recently,
Li--Yang--Zou \cite{LYZ}, Zou \cite{Z1}, and Wu--Wang \cite{WW1} obtained the
global existence and the optimal decay rates of the classical solution around a constant state
by a detailed analysis of the Greens function to the corresponding linearized equations, and Wang--Xu \cite{WX}
also proved the $L^2$--decay rate of the solution by using long wave and short wave decomposition method. It should be noted that in \cite{LYZ, WW1, Z1}, the viscosity coefficients of two fluids are
taken to be equal to each other, i.e.,
$$\mu_1 = \mu_2, \quad \nu_1 =\nu_2.$$
Thus by taking a linear combination of the system \eqref{1eq},
the system \eqref{1eq} can be reformulated into one for the Navier--Stokes system and another one for the UNSP system which however are coupled with each other through the nonlinear terms. Thus in order to obtain a priori estimates of the solutions, one can apply the similar arguments as in \cite{C1, Mats1, Mats2} for the Navier--Stokes system and in \cite{LMZ, W1} for the UNSP system. Recently, under the assumption that the initial perturbation is small in $H^l(\mathbb{R}^3)$ with $l\geq3$, Wu--Zhang--Zhang \cite{WZZ} established the global existence for the BNSP system with unequal viscosities, i.e.,
\begin{equation}\label{une-vis}
 \mu_1 \neq \mu_2, \quad \nu_1 \neq \nu_2.
\end{equation}
Moreover, if in addition, the initial perturbation is small in $\dot{H}^{-s}(\mathbb{R}^3)$--norm with $\frac12\le s<\frac32$ or $\dot B^{-s}_{2, \infty}$--norm with $\frac12<s\leq\frac32$, it is shown that the densities $\rho_i$ and the velocities $u_i =\frac{m_i}{\rho_i}$ with $i = 1,2$ have the following convergent decay estimates
\begin{equation}\nonumber
 \displaystyle
 \left\|\nabla^k\left(\rho_{1}-\frac{1}{Z},u_{1}, \rho_{2}-1,u_{2},\nabla\phi\right)(t) \right\|_{\ell-k}\lesssim(1+t)^{-\frac{k+s}{2}},
\end{equation}
for $k=0,1,\cdots, l-1,$ which together with the fact that for $p\in (1, 3/2]$, $L^p\subset \dot H^{-s}$ with $s=3(\frac{1}{p}-\frac{1}{2})\in [1/2, 3/2)$, and for $p\in [1, 3/2)$, $L^p \subset \dot B^{-s}_{2, \infty}$ with $s=3(\frac{1}{p}-\frac{1}{2})\in (1/2, 3/2]$ imply
\begin{equation}\label{1.4}
 \displaystyle
 \left\|\nabla^k\left(\rho_{1}-\frac{1}{Z}, u_{1}, \rho_{2}-1, u_{2}, \nabla\phi\right)(t)\right\|_{\ell-k}
 \lesssim (1+t)^{-{\frac{3}{2}(\frac{1}{p}-\frac{1}{2})-\frac{k}{2}}},
\end{equation}
for $p\in [1, 3/2]$ and $k=0,1,\cdots, l-1$. For the compressible Euler--Poisson system and related models, we refer to \cite{Guo2, Guo3, Guo4, WuQin, WuW11} and the references therein.

\par
\bigskip
Noticing the decay rates in \eqref{1.4}, for the $l$--order (i.e. \textbf{the highest--order}) spatial derivative  of the solution, it
holds that
\begin{equation}\label{1.5}
 \left\|\nabla^l\left(\rho_{1}-\frac{1}{Z}, u_{1}, \rho_{2}-1, u_{2}, \nabla\phi \right)(t)\right\|_{L^{2}}
 \lesssim(1+t)^{-{\frac{3}{2}(\frac{1}{p}-\frac{1}{2})-\frac{l-1}{2}}}.
\end{equation}
On the other hand, let us revisit the following classical result of the heat equation:
\begin{equation}\label{1.6}
 \left\{\begin{array}{l}\partial_{t} u-\Delta u=0, ~\text{ in } ~\mathbb{R}^{3},
 \\
 u|_{t=0}=u_0.
 \end{array}\right.
\end{equation}
If $u_0\in H^l(\mathbb{R}^{3})\cap L^p(\mathbb{R}^{3})$ with $l\geq 0$ be an integer and $1\leq p\leq 2$, then for any $0\leq k\leq l$, the solution of the heat equation \eqref{1.6} has the following decay rate:
\begin{equation}\nonumber
 \|\nabla^k u(t)\|_{L^2}
 \leq C((1+t)^{-{\frac{3}{2}(\frac{1}{p}-\frac{1}{2})-\frac{k}{2}}},
\end{equation}
which particularly implies
\begin{equation}\label{1.8}
 \|\nabla^l u(t)\|_{L^2}
 \leq C(1+t)^{-{\frac{3}{2}(\frac{1}{p}-\frac{1}{2})-\frac{l}{2}}}.
\end{equation}
Therefore, in view of \eqref{1.5} and \eqref{1.8}, it is clear that
the decay rate of the $l$--order spatial derivative of the solution
in \eqref{1.5} is slower than that of the heat equation as in
\eqref{1.8}. So, the decay rate of the $l$--order spatial derivative
of the solution in \eqref{1.5} is not optimal in this sense.
Furthermore, the decay rates in \eqref{1.4} give no information on
influences of the electric field on the qualitative behaviors of
solutions.\par The main motivation of this article is to give a
clear answer to these issues mentioned above. More precisely, our
main new contributions can be outlined as follows: First, our
methods provide a general framework that can be used to extract the
optimal decay rates of the solution to the Cauchy problem
\eqref{1eq}--\eqref{1id} as well as its all--order spatial
derivatives from one--order to the highest--order, which are the
same as those of the heat equation. Second, for well--chosen initial
data, we also show the lower bounds on the decay rates. Therefore,
our time decay rates are optimal. Third, we give the explicit
influences of the electric field on the qualitative behaviors of
solutions, which are completely new as compared to the previous
results for the UNSP system [Li et al., in Arch. Ration. Mech.
Anal., {196} (2010), 681--713; Wang, J. Differ. Equ., { 253} (2012),
273--297], and our results also imply the same effects of
electrostatic potential on the UNSP system immediately.

Before stating our results, let us introduce some notations and conventions used throughout this paper. We employ $H^\ell(\mathbb R^3)$ and $W^{m,p}(\mathbb R^3)$ to denote the usual Sobolev spaces with norm $||\cdot||_m$ and $||\cdot||_{m,p}$ for $m\geq 0$ and $1\leq p \leq+\infty$. If $m=0$, we just use $||\cdot||_{L^2}$ and $||\cdot||_{L^q}$ for convenience. Set a radial function $\varphi\in C_0^\infty(\mathbb R^3_\xi)$ such that $\varphi(\xi)=1$ while $|\xi| \le 1$ and $\varphi(\xi)=0$ while $|\xi| \geq 2$. Define the low frequent part of $f$ by
\begin{equation}\nonumber
 f^L= \mathfrak{F}^{-1}[\varphi(\xi)\widehat{f}],
\end{equation}
and the high frequent part of $f$ by
\begin{equation}\nonumber
 f^H= \mathfrak{F}^{-1}[(1-\varphi(\xi))\widehat{f}],
\end{equation}
then $f= f^L +f^H$ if the Fourier transform of $f$ exists. We will employ the notation $a\lesssim b$ to mean that $a\leq Cb$ for a universal constant $C>0$ that only depends on the parameters coming from the problem. And $C_i(i=1,2,\cdots, 9)$ will also denote some positive constants depending only on the parameters of the problem.

Our main results are stated in the following theorem.
\begin{theorem}\label{1mainth} $\bullet$ \textbf{Global existence.}
 Assume that $\left(\rho_{10}-\frac{1}{Z}, m_{10}, \rho_{20}-1, m_{20}, \nabla \phi_0\right) \in H^l(\mathbb{R}^3)$ with $l\geq 3$. Then if there exists a sufficiently small constant $\delta_0>0$, such that
 \begin{equation}\label{id-delta0}
  C_0
  = \left\|\left(\rho_{10}-\frac{1}{Z}, m_{10}, \rho_{20}-1, m_{20}, \nabla \phi_0\right)\right\|_{l} \le \delta_0,
 \end{equation}
 then the Cauchy problem \eqref{1eq}--\eqref{1id} with unequal viscosities \eqref{une-vis} admits a unique globally classical solution $(\rho_1, m_{1}, \rho_2, m_{2}, \phi)$ satisfying that for any $t\in [0, \infty)$,
 \begin{equation}\nonumber
  \begin{split}
  &\left\|\left(\rho_1-\frac{1}{Z}, m_1, \rho_2 -1, m_2, \nabla \phi\right)\right\|_{l}^2
  +\int_0^t \left(\left\|\nabla\left(\rho_1, \rho_2, \nabla \phi\right)(\tau)\right\|_{l-1}^2 +
  \|\nabla(m_1, m_2)(\tau)\|_{l}^2\right) d\tau
  \\
  &\le C\left\|\left(\rho_{10}-\frac{1}{Z}, m_{10}, \rho_{20}-1, m_{20}, \nabla
  \phi_0\right)\right\|_{l}^2.
  \end{split}
 \end{equation}
 $\bullet$ \textbf{Upper decay rates.}
 Under the assumption in Theorem \ref{1mainth}, if additionally for $1\le p\le \frac32$,
 \begin{equation}\label{k0}
  K_0
  =\left\|\left(\rho_{10}-\frac1Z, m_{10}, \rho_{20}-1, m_{20}\right)\right\|_{L^1} + \| \nabla\phi_0\|_{L^p}<\delta_0,
 \end{equation}
 then for all $t\geq 0$, it holds that
 \begin{equation}\label{1.13}
  \left\|\nabla^k\left(\rho_{1}-\frac{1}{Z}, \rho_{2}-1\right)\right\|_{L^2}
  \le C(1+t)^{-\frac34 -\frac k2},
 \end{equation}
 and
 \begin{equation}\label{1.14}
  \|\nabla^k(m_{1}, m_{2}, \nabla\phi)\|_{L^2}
  \le C(1+t)^{-\frac32 \left(\frac 1p-\frac12\right) -\frac k2},
 \end{equation}
 for $0\le k\le l$.

 \noindent$\bullet$ \textbf{Lower decay rates.} Moreover, assume that \eqref{k0} holds for $p=1$ and the Fourier transform functions $(\rho_{10}-1, m_{10}, \rho_{20}-1, m_{20})$ satisfy
 \begin{equation}\label{low-as}
  \mathfrak{F}\left[\rho_{10}-\frac1Z\right]
  =\mathfrak{F}[m_{10}]
  =\mathfrak{F}[\rho_{20}-1]
  = \mathfrak{F}[\Lambda^{-1}{\rm curl} m_{20}]
  =0,
  \quad \hbox{and} \quad |\mathfrak{F}[\Lambda^{-1}{\rm div} m_{20}]| \geq C\delta_0^\frac32,
 \end{equation}
 for any $|\xi|\le \eta$. Then there exists a positive constant $C_1$ independent of time such that for any large enough $t$,
 \begin{equation}\nonumber
  \begin{split}
  &\min\left\{\left\|\nabla^k\left(\rho_{1}-\frac1Z\right)\right\|_{L^2}, \left\|\nabla^km_{1}\right\|_{L^2}, \left\|\nabla^k\left(\rho_{2}-1\right)\right\|_{L^2}, \left\|\nabla^km_{2}\right\|_{L^2}, \left\|\nabla^k\nabla\phi\right\|_{L^2}\right\}
  \\
  &\geq C_1\delta_0^\frac32(1+t)^{-\frac34 -\frac k2},
  \end{split}
 \end{equation}
 for $0\le k\le l$.
\end{theorem}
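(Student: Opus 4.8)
The three assertions are treated separately. The \emph{global existence} part and the uniform bound are exactly the result of \cite{WZZ}, obtained by a standard energy method; I would only recall the resulting a priori dissipation inequality $\frac{d}{dt}\mathcal E_l(t)+c\,\mathcal D_l(t)\le 0$ with $\mathcal E_l\sim\|(n_1,m_1,n_2,m_2,\nabla\phi)\|_l^2$ and $\mathcal D_l\sim\|\nabla(n_1,n_2,\nabla\phi)\|_{l-1}^2+\|\nabla(m_1,m_2)\|_l^2$, where $n_1=\rho_1-\tfrac1Z$, $n_2=\rho_2-1$, since it will be reused below. The substance is the pair of sharp \emph{upper} bounds \eqref{1.13}--\eqref{1.14} together with the matching \emph{lower} bounds, and the strategy is a low/high frequency decomposition: the low--frequency part is handled by Duhamel's formula combined with a spectral analysis of the linearized operator, and the high--frequency part is handled by the energy method and decays exponentially in $t$. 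The whole thing is then closed by a continuity argument on a time--weighted norm that encodes the target rates up to the highest order $k=l$; this is possible because on low frequencies there is no loss of derivatives, so $\nabla^l$ of the solution is controlled at low frequency by Duhamel just like $\nabla^{l-1}$, while the derivative loss is confined to the high--frequency part where exponential decay absorbs it.

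\smallskip
\noindent\textbf{Linearized analysis.} Writing $U=(n_1,m_1,n_2,m_2)$ and reformulating \eqref{1eq}--\eqref{1id} as $\partial_tU=\mathcal LU+\mathcal N(U)$ (the potential being slaved through $\Delta\phi=Zn_1-n_2$), I would apply the Hodge decomposition $m_i=\nabla\Lambda^{-1}\psi_i+m_i^\perp$; the incompressible parts decouple linearly into heat equations and decay fast, so the decisive object is the $4\times4$ symbol $A(\xi)$ acting on $(\widehat n_1,\widehat\psi_1,\widehat n_2,\widehat\psi_2)$. For $|\xi|\lesssim 1$ I would Taylor--expand its eigenvalues into two conjugate pairs: an ``acoustic'' pair $\pm ic_s|\xi|-O(|\xi|^2)$ and a ``plasma'' pair $\pm i\omega_p-O(|\xi|^2)$, the constant frequency $\omega_p=\sqrt{Z+1}+O(|\xi|^2)$ arising because the Poisson coupling $\nabla\phi=\nabla\Delta^{-1}(Zn_1-n_2)$ produces a restoring force of order $\Lambda^{-1}$. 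The key structural point, read off from the associated spectral projectors, is that the density--to--density entries of $e^{tA}$ are bounded in $\xi$, so from $L^1$ data they deliver the heat rate $(1+t)^{-\frac34-\frac k2}$ for $\nabla^k(n_1,n_2)$, whereas the momentum/$\nabla\phi$--from--density entries carry a $|\xi|^{-1}$ singularity; rewriting $|\xi|^{-1}\widehat{(Zn_{10}-n_{20})}$ as (a Riesz transform of) $\widehat{\nabla\phi_0}$ and invoking Hausdorff--Young ($\nabla\phi_0\in L^p\Rightarrow\widehat{\nabla\phi_0}\in L^{p'}$) together with H\"older against $e^{-c|\xi|^2t}$ yields precisely $\|\nabla^k(m_i,\nabla\phi)^L(t)\|_{L^2}\lesssim(1+t)^{-\frac32(\frac1p-\frac12)-\frac k2}\|\nabla\phi_0\|_{L^p}+(1+t)^{-\frac34-\frac k2}\|U_0\|_{L^1}$. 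For $|\xi|\gtrsim1$ the Poisson term is a lower--order perturbation and $A(\xi)$ behaves like two decoupled compressible Navier--Stokes symbols, so the usual Lyapunov functional (a cross term $\langle\nabla n_i,\psi_i\rangle$) gives exponential decay of the high--frequency part with no loss of derivatives.

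\smallskip
\noindent\textbf{Nonlinear closure.} I would then run a continuity argument on
\[
\mathcal M(t)=\sup_{0\le s\le t}\Big\{\textstyle\sum_{k=0}^{l}(1+s)^{\frac34+\frac k2}\|\nabla^k(n_1,n_2)(s)\|_{L^2}+\sum_{k=0}^{l}(1+s)^{\frac32(\frac1p-\frac12)+\frac k2}\|\nabla^k(m_1,m_2,\nabla\phi)(s)\|_{L^2}\Big\}.
\]
For each $0\le k\le l$ split $\nabla^kU=\nabla^kU^L+\nabla^kU^H$. On the low frequencies there is no derivative loss, so Duhamel's formula, the linear estimates above, and the nonlinear bounds $\|\mathcal N(U)\|_{L^1}\lesssim\|U\|_{L^2}\|\nabla U\|_{L^2}$ and $\|\mathcal N(U)\|_{L^p}\lesssim\|U\|_{L^{2p/(2-p)}}\|\nabla U\|_{L^2}$ (plus $H^l$ bounds on the source for the high--order part) give the target rate for $\|\nabla^kU^L\|_{L^2}$; here it helps that the nonlinearity lives only in the momentum equations, so the continuity/charge equations are exact and no slowly--decaying source enters the density block, while all genuinely nonlinear contributions are integrable in time with the right exponent since they are quadratic in $\mathcal M$ and in $C_0$. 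On the high frequencies I would work with $(n_i,u_i)=(n_i,m_i/\rho_i)$ so that the viscous terms remain linear and use commutator estimates to avoid derivative loss, obtaining $\frac{d}{dt}\mathcal E^H_k+c\,\mathcal E^H_k\lesssim(\text{N.L.})$ and hence exponential decay modulo a faster tail. Collecting, $\mathcal M(t)\lesssim K_0+C_0\,\mathcal M(t)+\mathcal M(t)^2$, which closes for $\delta_0$ small and proves \eqref{1.13}--\eqref{1.14}.

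\smallskip
\noindent\textbf{Lower bounds, and the main obstacle.} Under \eqref{low-as} the transform $\widehat U_0$ vanishes on $\{|\xi|\le\eta\}$ except for $\mathfrak F[\Lambda^{-1}{\rm div}\,m_{20}]$, which is $\gtrsim\delta_0^{3/2}$ there; so on low frequencies the linear solution $e^{t\mathcal L}U_0$ equals that datum times the corresponding column of $e^{tA}$, and since the low--frequency inter--fluid coupling through $\nabla\phi\sim\Lambda^{-1}(\text{charge})$ is strong this single electron--momentum mode excites \emph{every} component with a symbol of the form $c(\xi)e^{i\omega(\xi)t}+\overline{c(\xi)}e^{-i\omega(\xi)t}$ times $e^{-c|\xi|^2t}$ with $c(0)\ne0$; as $|c(\xi)e^{i\omega t}+\overline{c(\xi)}e^{-i\omega t}|^2$ has the non--oscillating lower bound $2|c(\xi)|^2$, taking $L^2$ norms gives $\|\nabla^k(e^{t\mathcal L}U_0)_j\|_{L^2}^2\gtrsim\delta_0^3\int_{|\xi|\le\eta}|\xi|^{2k}e^{-c|\xi|^2t}\,d\xi\gtrsim\delta_0^3(1+t)^{-\frac32-k}$ for each component $j$. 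By the upper--bound analysis with $C_0\le\delta_0$ the Duhamel (nonlinear) term is $\lesssim\delta_0^2(1+t)^{-\frac34-\frac k2}$, and since $\delta_0^2\ll\delta_0^{3/2}$ the linear part dominates for large $t$, giving the claimed lower bound with $C_1\sim\delta_0^{3/2}$. The principal difficulty throughout is the spectral analysis of the $4\times4$ symbol $A(\xi)$ with \emph{unequal} viscosities, where the reduction to a Navier--Stokes block plus a UNSP block exploited in \cite{LYZ,WW1,Z1} is unavailable: one must expand not only the eigenvalues but the spectral projectors at low frequency precisely enough to identify which entries of $e^{tA}$ are $O(1)$ and which carry the $|\xi|^{-1}$ singularity, and for the lower bound to check that the mode $\Lambda^{-1}{\rm div}\,m_{20}$ couples non--degenerately into all five quantities; the highest--order energy estimate in the $(n_i,u_i)$ variables, arranged so that the nonlinearities lose no derivative, is a secondary but routine point.
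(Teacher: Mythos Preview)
Your overall architecture matches the paper's: global existence from \cite{WZZ}, div--curl decomposition and spectral analysis of the $4\times4$ compressible symbol, low/high frequency splitting with Duhamel at low frequency and energy estimates in the $(\varrho_i,u_i)$ variables at high frequency, closure on a time--weighted functional $\mathcal M(t)$, and lower bounds by subtracting the Duhamel tail from the linear lower bound. Two specific steps, however, are genuinely missing, and the first of them is exactly the new obstacle the paper isolates.

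\smallskip
\noindent\textbf{Upper bound for the densities at $p=\tfrac32$.} Your sentence ``the continuity/charge equations are exact and no slowly--decaying source enters the density block'' is not correct in the Duhamel picture: the momentum source $N^m_i$ \emph{does} feed into $\varrho_i$ through the off--diagonal entries of $e^{tA}$, and those entries are $O(1)$ (not $O(|\xi|)$) as $|\xi|\to0$. With only $\|\mathcal N\|_{L^1}\lesssim(1+\tau)^{-\frac34-\frac32(\frac1p-\frac12)}$ one gets
\[
\int_0^t(1+t-\tau)^{-\frac34}(1+\tau)^{-\frac34-\frac32(\frac1p-\frac12)}\,d\tau,
\]
which at $p=\tfrac32$ is $\int_0^t(1+t-\tau)^{-\frac34}(1+\tau)^{-1}\,d\tau=O\big((1+t)^{-\frac34}\ln(1+t)\big)$, and the closure for $\|\varrho_i\|_{L^2}$ fails. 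The paper's remedy (Proposition~\ref{prop-decay4} and the discussion around \eqref{1.19}--\eqref{1.20}) is a structural observation you did not use: in the density rows of $e^{tA}$ the momentum sources enter only through the \emph{sum} $N^m_1+N^m_2$, and by the Poisson equation
\[
(Z\varrho_1-\varrho_2)\nabla\phi=\Delta\phi\,\nabla\phi=\mathrm{div}\Big(\nabla\phi\otimes\nabla\phi-\tfrac12|\nabla\phi|^2\,\mathbb I\Big),
\]
so $N^m_1+N^m_2$ is a full divergence. Hence $\Lambda^{-1}\mathrm{div}(N^m_1+N^m_2)$ carries an extra $|\xi|$ and the kernel rate on $[0,t/2]$ improves to $(1+t-\tau)^{-\frac54}$, which removes the logarithm and closes \eqref{1.13} at the endpoint.

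\smallskip
\noindent\textbf{Lower bound.} The claim that $|c(\xi)e^{i\omega(\xi)t}+\overline{c(\xi)}e^{-i\omega(\xi)t}|^2$ has the ``non--oscillating lower bound $2|c(\xi)|^2$'' is false pointwise (it equals $4|c|^2\cos^2(\cdot)$ and vanishes on a set), and in any case the relevant symbol is a \emph{sum} of two such pairs (plasma and acoustic), so squaring also produces a cross term. The paper (Proposition~\ref{li-low-de-es1}) writes $\cos^2=\tfrac12(1+\cos 2\theta)$ and shows, via the $\xi$--dependence of the phases and an integration by parts in the radial variable, that the oscillating pieces and the cross term are of strictly lower order in $t$ than the non--oscillating part $\tfrac12\int e^{-c|\xi|^2t}|\widehat{n_{20}}|^2d\xi$; only then does the lower bound follow. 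Your sketch does not contain this argument.
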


\begin{Remark}
 The global existence of the Cauchy problem \eqref{1eq}--\eqref{1id} with unequal viscosities \eqref{une-vis} under
 the small initial perturbation assumption has been proved in \cite{WZZ} by the standard continuity argument. In this paper,
 we focus on the upper--lower time decay rates of the solution as well as all-orders spatial derivatives, and
 the explicit influences of the electric field on the qualitative behaviors of solutions.
\end{Remark}

\begin{Remark}
 It is interesting to make a comparison between Theorem \ref{1mainth} and those of the UNSP system \cite{LMZ, W1}. On the one hand, the main result of \cite{LMZ} can be listed as follows:
 \par
 Let $l\geq 4$ be an integer, if $||(\rho_0-\bar{\rho}, m_0)||_{H^l\cap L^1}$ is sufficiently small, the authors in \cite{LMZ} proved that the UNSP system has a small smooth solutions satisfying the following decay rates with $k=0,1$:
 \begin{equation}\label{1.17}
  \|\nabla^k(\rho-\bar{\rho})\|_{L^2}
  \leq C (1+t)^{-\frac34-\frac k2}\|(\rho_0-\bar{\rho}, m_0)\|_{H^l\cap L^1},
 \end{equation}
 \begin{equation}\label{1.18}
  \|\nabla^k(m, \nabla \phi)\|_{L^2}\leq C (1+t)^{-\frac14-\frac k2}||(\rho_0-\bar{\rho}, m_0)||_{H^l\cap L^1}.
 \end{equation}
 It should be mentioned that there is no initial low frequency assumption on the electric field $\phi$ in their proofs,
 which is the reason why the decay rate of the momentum of the UNSP system \eqref{1.18} is slower than that of the
 compressible Navier--Stokes equations. In addition, the decay rates \eqref{1.17} and \eqref{1.18}
 give no information on the optimal decay rates of the higher--order spatial derivatives ($k\geq 2$) of the solutions.
 In our Theorem \ref{1mainth}, by replacing \eqref{k0} with
 \begin{equation}\label{1.19}
  \left\|\left(\rho_{10}-\frac1Z, m_{10}, \rho_{20}-1, m_{20}\right)\right\|_{L^1}<\delta_0,
 \end{equation}
 we can also show that \eqref{1.13} and the following decay rate hold,
 \begin{equation}\label{1.20}
  \|\nabla^k(m_{1}, m_{2}, \nabla\phi)\|_{L^2} \le C(1+t)^{-\frac14-\frac k2},~\hbox{for}~0\le k\le l,
 \end{equation}
 where the decay rates are the same as those of \eqref{1.17} and \eqref{1.18}, and also give the optimal decay rates of the higher--order spatial derivatives ($k\geq 2$) of the solutions. Moreover, this particularly implies that the decay rate \eqref{1.14} in Theorem \ref{1mainth} can be replaced by
 \begin{equation}\label{1.21}
  \|\nabla^k(m_{1}, m_{2}, \nabla\phi)\|_{L^2}
  \leq
  \left\{\begin{array}{lll}
  C(1+t)^{-\frac32 \left(\frac 1p-\frac12\right) -\frac k2},~~\hbox{for}~~1\leq p<\frac32,
  \\
  C(1+t)^{-\frac14-\frac k2},~~\hspace{1cm}\hbox{for}~\frac32\leq p\leq 2,
  \end{array}\right.
 \end{equation}
 for $0\le k\le l$. To see this, under the assumption \eqref{1.19}, by Hausdorff--Young inequality, one can modify the proofs of \eqref{li-v-1} in Proposition \ref{li-de-es1} to obtain
 \begin{equation}\nonumber
  \|\nabla^k( n_1 ^L, n_2^L, ( \nabla\phi)^L )\|_{L^2}
  \le C (1+t)^{-\frac14 -\frac k2}\|U_0^L\|_{L^1},
 \end{equation}
 which together with the nonlinear energy estimates used in the proof of Proposition \ref{es-thm-M} yields \eqref{1.13} and \eqref{1.20}
 immediately. On the other hand, the main result of \cite{W1} can be outlined as follows:
 \par
 Assume that $\rho_{0 }-1, u_{0 },\nabla\phi_0\in H^{l}\cap \dot{H}^{-s}$ for an integer $l\ge 3$ and $s\in [0,3/2)$.
 Then there exists a sufficiently small constant $\epsilon_0$ such that if
 \begin{equation}\nonumber
  \|{(\rho_{0 }-1, u_0, \nabla\phi_0)}\|_{{3}}\leq \epsilon_0,
 \end{equation}
 then the UNSP system admits a small smooth solutions satisfying the following decay rates $(\rho,u,\nabla\phi)$ satisfying the following decay rates:
 \begin{equation}\nonumber
  \|\nabla^k(\rho-1, u, \nabla\phi)\|_{l-k}\leq C\|(\rho_{0 }-1, u_{0}, \nabla\phi_0)\|_{H^{l}\cap \dot{H}^{-s}}(1+t)^{-\frac{k+s}{2}}\
  \hbox{ for }k=0,\dots, l-1,
 \end{equation}
 \vspace{-0.5cm}
 \begin{equation}\nonumber
  \|{\nabla^k(\rho-1)(t)}\|_{l-k}\leq C\|(\rho_{0 }-1, u_{0}, \nabla\phi_0)\|_{H^{l}\cap\dot{H}^{-s}}(1+t)^{-\frac{k+s+1}{2}}\ \hbox{ for }k=0,\dots, l-2,
 \end{equation}
 which together with the fact that for $p\in (1,2]$, $L^p\subset \Dot{H}^{-s}$ with $s=3(\frac{1}{p}-\frac{1}{2})\in[0,3/2)$ imply that the following optimal decay results:
 \begin{equation}\nonumber
  \|\nabla^k{(\rho-1, u, \nabla\phi)}\|_{L^2}
  \leq C\|(\rho_{0 }-1, u_{0}, \nabla\phi_0)\|_{H^{l}\cap L^p} (1+t)^{-\frac32 \left(\frac 1p-\frac12\right)-\frac k2}\
  \hbox{ for }k=0,\dots, l-1,
 \end{equation}
 \vspace{-0.5cm}
 \begin{equation}\label{1decay2}
  \|{\nabla^k(\rho-1)(t)}\|_{L^2}
  \leq C\|(\rho_{0 }-1, u_{0}, \nabla\phi_0)\|_{H^{l}\cap L^p} (1+t)^{-\frac32 \left(\frac 1p-\frac12\right)-\frac{k+1}2}\
  \hbox{ for }k=0,\dots, l-2.
 \end{equation}
 Particularly, the decay rate in \eqref{1decay2} implies that the initial low frequency assumption on the electric field $\phi$ enhances the decay rate of the density $\rho$.
 \par
 In conclusion, both of the results in \cite{LMZ, W1} imply that the electric field affects
 the decay rates of the solutions. However, neither of them clarifies that how the electric field affects the decay rates of the solutions.
 Our Theorem \ref{1mainth} gives a clear answer to this issue. More precisely,
 the decay rate in \eqref{1.21} shows the explicit influences of the electric field on the decay rates of solutions,
 which is totally new as compared to the previous results for the UNSP system \cite{LMZ, W1}.
 On the other hand, by employing our method to the UNSP system, one can easily find that the electric field plays the same
 role in doing decay rate to the two systems.
Therefore, this phenomenon is the most important difference between
the NS system and the NSP system. In addition, by noticing the fact
that $Z\rho_1-\rho_2\sim \Delta\phi$ and using \eqref{1.21}, the
difference $Z\rho_1-\rho_2$ between two densities has the following
decay rate
 \begin{equation}\nonumber
  \|\nabla^k (Z\rho_1-\rho_2)\|_{L^2}
  \leq
  \left\{\begin{array}{lll}
   C(1+t)^{-\frac32 \left(\frac 1p-\frac12\right) -\frac {k+1}2},~~\hbox{for}~~1
   \leq p<\frac32,
   \\
   C(1+t)^{-\frac14-\frac {k+1}2},~~\hspace{1cm}\hbox{for}~\frac32
   \leq p\leq 2,
  \end{array}\right.
 \end{equation}
 for $0\le k\le l-1$, which is faster than those of themselves. Finally, for well--chosen initial data, we also get the lower bound on the decay rates. Therefore, the time decay rates in Theorem \ref{1mainth} are optimal.
\end{Remark}

\indent Now, let us sketch the strategy of proving Theorem
\ref{1mainth} and explain some of the main difficulties and
techniques involved in the process. As mentioned before, the main
ideas of \cite{LYZ,WW1,Z1} are based on reformulating the system
\eqref{1eq} by taking linear combination of the system \eqref{1eq}
and employing the similar arguments as in \cite{C1, Mats1, Mats2}
for the Navier--Stokes system and in \cite{LMZ, W1} for the UNSP
system. Therefore, the methods in \cite{LYZ,WW1,Z1}, depending
essentially on this reformulation, do not work here. The main idea
here is that instead of using the reformulation, we will work on the
system \eqref{1eq} directly. So, compared to \cite{LYZ,WW1,Z1}, we
need to develop new ingredients in the proof to overcome the
difficulties arising from unequal viscosities, the non--conservative
structure of the system \eqref{1eq} and the interaction of two
fluids through the electric field, which requires some new ideas.
More precisely, we will employ ``div--curl" decomposition, the
low--frequency and high--frequency decomposition, delicate spectral
analysis and energy estimates. Roughly speaking, our proof mainly
involves the following five steps.\par First, we rewrite the Cauchy
problem \eqref{1eq}--\eqref{1id} into the  perturbation form
\eqref{2eq-m} and analyze the spectral of the solution semigroup to
the corresponding linear system. We therefore encounter a
fundamental obstacle that the Fourier transform of the Green's
matrix to the linear system of \eqref{2eq-m} is an $8$--order matrix
and is not self--adjoint. Particularly, it is easy to check that it
can not be diagonalizable (see \cite{Sideris} pp.807 for example).
Therefore, it is very difficult to apply the usual time decay
investigation through spectral analysis. To tackle with this
problem, we will employ the ``div--curl" decomposition technique
developed in \cite{Dan1, Dan2, Dan3, WZZ2} to split the linear
system into three systems. One has four distinct eigenvalues and the
other two are classic heat equations. Then, by making careful
analysis on the Fourier transform of Green's function to the linear
equations, we can obtain the desired linear decay rates. More
exactly, from the elaborate expression \eqref{so-max1} of the
solution semigroup, we find that the low frequency of the
electrostatic potential plays crucial role in the decay rate
estimate. Although the solution semigroup of the BNSP system is much
more complicated than one of the UNSP system, explicit spectral
analysis on the semigroup means that the influence of the
electrostatic potential to these two systems is almost the same.
Meanwhile, it is also the most important difference between the NS
system and the NSP system (see Proposition \ref{li-de-es1} and
\ref{prop-decay4} for details).
\par
Second, we deduce zero--order and the highest--order low frequency
decay estimates. In the process of deducing the zero--order low
frequency decay estimates, the main difficulty lies in deriving the
decay rates of the densities, which are faster than those of the
momentums. Indeed, noting that the expression of the solution
\eqref{so-ex}, \eqref{li-varrho-1} and the estimate of the nonlinear
term \eqref{es-no-N}, one can get
 \begin{equation}
 \begin{split}\label{1.19}
 \|(\varrho_1^L, \varrho_2^L)(t)\|_{L^2}
 &
\lesssim
(1+t)^{-\frac34}\|U_0\|_{L^1}+\underbrace{\int_0^{t}\|((S^\varrho_1)^L,
 (S^\varrho_2)^L)\|_{L^2}d\tau}_{\mathcal{I}(t)}\\
 &\lesssim (1+t)^{-\frac34}\|U_0\|_{L^1} + \int_0^{t}(1+t-\tau)^{-\frac34} \|\mathcal{N}^L(\tau)\|_{L^1}d\tau
 \\
 &\lesssim (1+t)^{-\frac34}\|U_0\|_{L^1}+ \mathcal{M}^2(t)\int_0^{t}(1+t -\tau)^{-\frac34}(1+\tau)^{-\frac32\left(\frac1p -
 \frac12\right)-\frac34}d\tau,
 \end{split}
\end{equation}
where $\mathcal{M}(t)$ is defined in \eqref{M1}. However, by taking
$p=\frac32$, it is clear that
\begin{equation}\label{1.20}
 \int_0^{t}(1+t -\tau)^{-\frac34}(1+\tau)^{-\frac32\left(\frac1p -
 \frac12\right)-\frac34}d\tau=\int_0^{t}(1+t-\tau)^{-\frac34}(1+\tau)^{-1}d\tau
 =O((1+t)^{-\frac34}\hbox{ln}(1+t)).
\end{equation}
In view of \eqref{1.19} and \eqref{1.20}, it seems impossible to
obtain the decay estimate of $(\varrho_1^L, \varrho_2^L)$ as in
\eqref{low-fre-varrho1} which however is crucial for the proof of
Theorem \ref{1mainth}. Our key idea here is to split
$\mathcal{I}(t)$ into two parts:
\begin{equation}\nonumber
 \begin{split}
 \mathcal{I}(t)
 &=\left(\int_0^{\frac t2}+\int_{\frac t2}^t\right)\|((S^\varrho_1)^L,
 (S^\varrho_2)^L)\|_{L^2}d\tau
 \\
 &:=\mathcal{I}_1(t)+\mathcal{I}_2(t),
 \end{split}
\end{equation}
and then estimate the terms $\mathcal{I}_1(t)$ and $\mathcal{I}_2(t)$ respectively. For the term $\mathcal{I}_2(t)$, one can easily get
\begin{equation}\nonumber
 \mathcal{I}_2(t)
 \lesssim \mathcal{M}^2(t)\int_{\frac t2}^{t}(1+t-\tau)^{-\frac34}(1+\tau)^{-\frac32\left(\frac1p -
 \frac12\right)-\frac34}d\tau\lesssim \mathcal{M}^2(t)(1+t)^{-\frac34},
\end{equation}
for any $p\in [1, \frac32]$. However, we need develop new thoughts
to deal with the term $\mathcal{I}_1(t)$. More precisely, we have to
figure out how to improve the estimates on the terms ${\rm
div}\left(\frac{m_i\otimes m_i}{\rho_i}\right)$ in \eqref{N1N2-m},
which devote the slowest decay rate in $\mathcal{I}_1(t)$. To this
end, we try our best to explore the nonlinear part in the expression
of the solution \eqref{so-ex}. Fortunately, by delicate
calculations, we surprisingly find that the nonlinear terms in the
expressions of the Fourier transformations of the densities
$(\varrho_1^L, \varrho_2^L)$ can be rewritten in divergent forms(See
the proof of Proposition \ref{prop-decay4} for details). Note that
this is one of the differences between the ``densities--momentums"
system \eqref{2eq-m} and the ``densities--velocities" system
\eqref{2eq}. As a result, one can shift the derivative onto the
solution semigroup to obtain the desired decay estimates. With the
help of this key observation, one can obtain the same decay rate of
the term $\mathcal{I}_1(t)$ as that of $\mathcal{I}_2(t)$.
Consequently, we can get the decay estimate of $(\varrho_1^L,
\varrho_2^L)$ in \eqref{low-fre-varrho1}. To obtain the
highest--order low frequency decay estimates, as compared to the
case of zero-order, we encounter a new difficulty that it requires
us to control the terms involving $l+1$--order or $l+2$--order
spatial derivatives of the solutions which however don't belong to
the solution space. To get around this difficulty, we separate the
time interval into two parts and make full use of the benefit of the
low--frequency and high--frequency decomposition to get our desired
convergence rates (see the proof of Lemma \ref{le-es-loworder1} for
details).
\par
Third,
we deduce the highest--order high frequency decay estimates. In this step,
we cannot work directly on the system of the variables $(\varrho_1, m_1, \varrho_2, m_2)$
as in \eqref{2eq-m}. Indeed, by noting the nonlinear terms in \eqref{N1N2-m}, we fail to
deal with the trouble terms involving $\nabla^{l+1}(\varrho_im_i)$ with $i=1,2$.
The main observation here is that instead of the variables $(\varrho_1, m_1, \varrho_2, m_2)$,
we study the system of the variables $(\varrho_1, u_1, \varrho_2, u_2)$ as in \eqref{2eq}. Then,
the corresponding trouble terms in \eqref{N1N2N3N4} become ones involving $\nabla^{l-1}(\varrho_i\Delta u_i)$ with $i=1,2$,
which however can be tackled with (see the proof of Lemma \ref{le-es-highorder1} for details).
\par
Forth, we prove the upper optimal decay rates of the solutions.
Combining the zero--order and the highest--order low frequency decay
estimates obtained in Step 2 with the highest--order high frequency
decay estimates obtained in Step 3, we can get the decay rates of
zero--order and the highest--order spatial derivatives of the
solutions by taking full advantage of the good properties of the
low--frequency and high--frequency decomposition. Then, by Sobolev
interpolation and the definition of $\mathcal{M}(t)$ in \eqref{M1},
we can get the key time--independent bound on $\mathcal{M}(t)$, and
this implies the upper optimal decay rates of the solutions in
Theorem \ref{1mainth} immediately.
\par
In the last step,
we show the lower optimal decay rates of the solutions.
To do this, we first employ Duhamel's principle, the lower decay rates of the
linear system in \eqref{low-li-es1} and \eqref{one-low-li-es1}, and Proposition \ref{li-low-de-es1}
to get the lower optimal decay rates of the solution as well as its one--order spatial derivative.
Then, for $1\leq k\leq l$, we can prove the lower optimal decay rates on the $k$--order spatial derivative
by an interpolation trick, and thus this completes the proof of the lower optimal decay rates in Theorem \ref{1mainth}.
\section{Reformulation of Original Problem}\label{Reformulation}
\noindent\textbf{2.1 Linearized System}

Let $\varrho_1 =\rho_1 -\frac1Z $ and $\varrho_2 = \rho_2 -1$. Then by using the fact that $\phi =\Delta^{-1}(Z\varrho_1 -\rho_2)$, the Cauchy problem \eqref{1eq}--\eqref{1id} can be rewritten as
\begin{equation}\label{2eq-m}
 \left\{\begin{array}{lll}
 \partial_t \varrho_1 + {\rm div}m_1 = 0,
 \\
 \partial_t m_1 + P'_1\left(\frac1Z \right)\nabla \varrho_1 - \nabla \Delta^{-1} (Z\varrho_1 - \varrho_2) -\mu_1Z \Delta m_1 -\nu_1Z \nabla {\rm div} m_1 = N^m_1,
 \\
 \partial_t \varrho_2 + {\rm div}m_2 =0,
 \\
 \partial_t m_2 + P'_2(1)\nabla \varrho_2 + \nabla \Delta^{-1} (Z\varrho_1 - \varrho_2) -\mu_2 \Delta m_2 -\nu_2 \nabla {\rm div} m_2 = N^m_2,
 \\
 (\varrho_1, m_1, \varrho_2, m_2)(x,0) =(\rho_1- \frac1Z, m_1, \rho_2-1, m_2)(x) := (\varrho_{10}, m_{10}, \varrho_{20}, m_{20})(x)
 \end{array}\right.
\end{equation}
with\small{
\begin{equation}\label{N1N2-m}
 \left\{\begin{array}{lll}
  \displaystyle N^m_1 = Z\varrho_1\nabla \phi -{\rm div}\mathbb{F}_1
  \\
  \displaystyle \qquad:= Z\varrho_1\nabla \phi -{\rm div}\left(\frac{m_1\otimes m_1}{\rho_1} +\left(P_1(\rho_1) -P_1\left(\frac1Z\right) -P_1'\left(\frac1Z\right)\rho_1\right)\mathbb{I}_3 +\mu_1 Z\nabla\left(\frac{\varrho_1 m_1}{\rho_1}\right)\right.
  \\
  \displaystyle \qquad\qquad\qquad\qquad\qquad\left.+\nu_1Z{\rm div}\left(\frac{\varrho_1 m_1}{\rho_1}\right) \mathbb{I}_3\right),
  \\
  \displaystyle N^m_2 = -\varrho_2\nabla \phi -{\rm div}\mathbb{F}_2
  \\
  \displaystyle \qquad:= -\varrho_2\nabla \phi -{\rm div}\left(\frac{m_2\otimes m_2}{\rho_2} +\left(P_2(\rho_2) -P_2(1) -P_2'(1)\rho_2\right)\mathbb{I}_3 +\mu_2 \nabla\left(\frac{\varrho_2 m_2}{\rho_2}\right)\right.
  \\
  \displaystyle \qquad\qquad\qquad\qquad\qquad\left.+\nu_2{\rm div}\left(\frac{\varrho_2 m_2}{\rho_2}\right) \mathbb{I}_3\right).
 \end{array}\right.
\end{equation}}
We will do the decay rate on the lower--frequent part of the solution to the system \eqref{2eq-m}. Exactly speaking, thanks to the divergent form of the nonlinear terms in \eqref{N1N2-m}, we can estimate the convergence rate of the solution by shifting differential operator. On the contrary, we would fail to estimate the decay rate on the higher--frequent part of the highest--order derivatives of the solution due to the divergent form. To this end, we should introduce the following system of the densities $\varrho_i$ and velocities $u_i= \frac{m_i}{\rho_i}$ with $i = 1,2$
\begin{equation}\label{2eq}
 \left\{\begin{array}{lll}
  \partial_t \varrho_1 + \frac1Z{\rm div}u_1 = N^\varrho_1,
  \\
  \partial_t u_1 + P'_1(1)\nabla \varrho_1 - Z\nabla \Delta^{-1} (Z\varrho_1 - \varrho_2) -\mu_1Z \Delta u_1 -\nu_1Z \nabla {\rm div} u_1 = N^u_1,
  \\
  \partial_t \varrho_2 + {\rm div}u_2 =N^\varrho_2,
  \\
  \partial_t u_2 + P'_2(1)\nabla \varrho_2 + \nabla \Delta^{-1} (Z\varrho_1 - \varrho_2) -\mu_2 \Delta u_2 -\nu_2 \nabla {\rm div} u_2 = N^u_2,
  \\
  (\varrho_1, u_1, \varrho_2, u_2)(x,0) =(\varrho_{10}, \frac{m_{10}}{\rho_{10}}, \varrho_{20}, \frac{m_{20}}{\rho_{20}})(x) := (\varrho_{10}, u_{10}, \varrho_{20}, u_{20})(x)
 \end{array}\right.
\end{equation}
with
\begin{equation}\label{N1N2N3N4}
 \left\{\begin{array}{lll}
  \displaystyle N^\varrho_1 = -{\rm div}(\varrho_1u_1),
  \\
  \displaystyle N^u_1 = -u_1\cdot \nabla u_1 - \left( \frac {P'_1(\rho_1)} {\rho_1}-ZP'_1\left(\frac1Z\right) \right)\nabla \varrho_1  -\frac {\mu_1 Z\varrho_1}{\rho_1}\Delta u_1 - \frac {\nu_1 Z\varrho_1}{\rho_1} \nabla {\rm div} u_1,
  \\
  N^\varrho_2 = -{\rm div}(\varrho_2u_2),
  \\
  \displaystyle N^u_2 = -u_2\cdot \nabla u_2 - \left( \frac {P'_2(\rho_2)} {\rho_2}-P'_2(1) \right)\nabla \varrho_2 -\frac {\mu_2 \varrho_2}{\rho_2}\Delta u_2 - \frac {\nu_2 \varrho_2}{\rho_2} \nabla {\rm div} u_2.
 \end{array}\right.
\end{equation}

\noindent\textbf{2.2 ``div--curl" Decomposition}

For $i = 1, 2$, let $n_i = \Lambda^{-1} {\rm div} m_i$ be the ``compressible part" of $m_i$ and $M_i = \Lambda^{-1} {\rm curl} m_i$ (with ${\rm curl}z = (\partial_{x_2}z^3 -\partial_{x_3}z^2, \partial_{x_3}z^1 -\partial_{x_1}z^3, \partial_{x_1}z^2 -\partial_{x_2}z^1)^T$) be the ``incompressible part" of $m_i$ respectively. Then the system \eqref{2eq-m} can be rewritten as two parts in the following
\begin{equation}\label{3eq}
 \left\{\begin{array}{lll}
  \partial_t \varrho_1 + \Lambda n_1 = 0,
  \\
  \partial_t n_1 - P'_1\left(\frac1Z\right)\Lambda \varrho_1 - \Lambda^{-1} (Z\varrho_1 - \varrho_2) -(\mu_1 + \nu_2)Z \Delta n_1 = \Lambda^{-1} {\rm div} N^m_1,
  \\
  \partial_t \varrho_2 + \Lambda n_2 = 0,
  \\
  \partial_t n_2 - P'_2(1)\Lambda \varrho_2 + \Lambda^{-1} (Z\varrho_1 - \varrho_2) - (\mu_2 + \nu_2) \Delta n_2 = \Lambda^{-1} {\rm div} N^m_2,
  \\
  (\varrho_1, n_1, \varrho_2, n_2)(x,0) = (\varrho_{10}, \Lambda^{-1}{\rm div}m_{10}, \varrho_{20}, \Lambda^{-1}{\rm div}m_{20})(x) := (\varrho_{10}, n_{10}, \varrho_{20}, n_{20})(x)
 \end{array}\right.
\end{equation}
and
\begin{equation}\label{4eq}
 \left\{\begin{array}{lll}
  \partial_t M_1 - \mu_1Z \Delta M_1 = \Lambda^{-1} {\rm curl} N^m_1,
  \\
  \partial_t  M_2 - \mu_2 \Delta  M_2 = \Lambda^{-1} {\rm curl} N^m_2,
  \\
  (M_1,  M_2)(x,0) = (\Lambda^{-1}{\rm curl} m_{10}, \Lambda^{-1}{\rm curl}m_{20})(x) :=(M_{10}, M_{20})(x).
 \end{array}\right.
\end{equation}

Note that \eqref{3eq} are hyperbolic--parabolic system that the structure of the solution semigroup is simpler than one of \eqref{2eq-m}, and \eqref{4eq} are mere heat equations on the $M_i$. Moreover, by the relationship
\begin{equation}\nonumber
 m_i = -\Lambda^{-1}\nabla n_i - \Lambda^{-1}{\rm div}M_i, \quad i=1,2
\end{equation}
involving pseudo--differential operators of degree zero, the estimates in the space $H^l$ for the original function $m_i$ can be derived from $n_i$ and $M_i$. Hence we will focus on the spectral analysis on the solution semigroups of \eqref{3eq}--\eqref{4eq}.

\noindent\textbf{2.3 Spectral Analysis}

Let $U = (\varrho_1, n_1, \varrho_2, n_2)^T$. Due to the semigroup theory for evolutionary equation, we will study the following initial value problem for the linear system
\begin{equation}\label{5eq}
 \left\{\begin{array}{lll}
  U_t = \mathbb{B}U,
  \\
  U|_{t=0} = U_0 = (\varrho_{10}, n_{10}, \varrho_{20}, n_{20})^T,
 \end{array}\right.
\end{equation}
where the operator $\mathbb{B}$ is given by
\begin{equation}\nonumber
\mathbb{B} = \left(\begin{array}{cccc}
 0 &   -\Lambda & 0 & 0
 \\
 Z\Lambda^{-1} + P_1'\left(\frac1Z\right)\Lambda & (\mu_1 + \nu_1)Z\Delta & -\Lambda^{-1} & 0
 \\
 0 & 0 & 0 & -\Lambda
 \\
 -Z\Lambda^{-1} & 0 & \Lambda^{-1} + P_2'(1)\Lambda & (\mu_2 +\nu_2)\Delta
 \end{array}\right).
\end{equation}

Taking the Fourier transform to  the system, we have
\begin{equation}\nonumber
 \left\{\begin{array}{lll}
  \widehat{U}_t = \mathbb{A}(\xi)\widehat{U},
  \\
  \widehat{U}|_{t=0} = \widehat{U}_0,
 \end{array}\right.
\end{equation}
where $\widehat{U}(\xi, t) = \mathfrak{F}(U(x, t))$ and $\mathbb{A}(\xi)$ is given by
\begin{equation}\nonumber
 \mathbb{A}(\xi) = \left(\begin{array}{cccc}
 0 &   -|\xi| & 0 & 0
 \\
 Z|\xi|^{-1} + P_1'\left(\frac1Z\right)|\xi| & -(\mu_1 + \nu_1)Z|\xi|^2 & -|\xi|^{-1} & 0
 \\
 0 & 0 & 0 & -|\xi|
 \\
 -Z|\xi|^{-1} & 0 & |\xi|^{-1} + P_2'(1)|\xi| & -(\mu_2 +\nu_2)|\xi|^2
 \end{array}\right).
\end{equation}

The eigenvalues of the matrix $\mathbb{A}(\xi)$ can be solved from the determinant\small{
\begin{align*}
  \det\{\mathbb{A}(\xi) - \lambda \mathbb{I}\}
  &= \lambda ^4 +(Z(\mu_1 +\nu_1) +\mu_2 +\nu_2)|\xi|^2 \lambda^3
  \\
  &\qquad + \left(Z(\mu_1 +\nu_1)(\mu_2 +\nu_2)|\xi|^4 + \left( P_1'\left(\frac1Z\right) + P_2'(1)\right) |\xi|^2 +1+Z\right)\lambda^2 \\
  &\qquad +\left(\left(Z(\mu_1 +\nu_1)P_2'(1) +(\mu_2 +\nu_2)P_1'\left(\frac1Z\right)\right)|\xi|^4 +Z(\mu_1 +\nu_1 +\mu_2 +\nu_2)|\xi|^2\right)\lambda
  \\
  & \qquad + P_1'\left(\frac1Z\right) P_2'(1) |\xi|^4 + \left( P_1'\left(\frac1Z\right) + ZP_2'(1)\right) |\xi|^2
  \\
  &=0.
\end{align*}}
\!\!By direct calculation and delicate analysis on the roots of the above quartic equation, we can deduce that the eigenvalues of the matrix $\mathbb{A}(\xi)$ has four different eigenvalues $\lambda_i = \lambda_i (\xi)$ with $i= 1, 2, 3, 4$ while $|\xi|\ll1$; the detail also can be seen in \cite{CY}. Hence we can decompose the semigroup $e^{t\mathbb{A}(\xi)}$ in the following:
\begin{equation}\nonumber
 e^{t\mathbb{A}(\xi)}
 = \sum_{i = 1}^4e^{\lambda_i t} \mathbb{P}_i(\xi)
\end{equation}
with the projector $\mathbb{P}_i(\xi)$ given by
\begin{equation}\nonumber
 \mathbb{P}_i(\xi)
 = \prod_{j\neq i}\frac{ \mathbb{A}(\xi) - \lambda_j I}{\lambda_i - \lambda_j},\quad i,j=1,2,3,4
\end{equation}

Then we can represent the solution of the problem as
\begin{equation}\label{so-expr1}
 \widehat{U}(\xi, t)
 = e^{t\mathbb{A}(\xi)} \widehat{U}_0(\xi)
 = \left(\sum_{i=1}^4 e^{\lambda_i t}\mathbb{P}_i(\xi)\right) \widehat{U}_0(\xi).
\end{equation}

\begin{lemma}\label{eigenvalues}
 There exists a positive constant $\eta\ll1$ such that, for $|\xi|\le \eta$, the spectral has the following Taylor series expansion
 \begin{equation}\nonumber
  \left\{\begin{array}{lll}\displaystyle
   \lambda_{1,2}
   = -\kappa_1|\xi|^2 +O(|\xi|^4) \pm i\left(\sqrt{1+Z} +\frac{\sigma_1^2}{2\sqrt{1+Z}}|\xi|^2 +O(|\xi|^4)\right)
   \\
   \displaystyle\lambda_{3,4}
   = -\kappa_2|\xi|^2  +O(|\xi|^4) \pm i\left(\sigma_2|\xi| +O(|\xi|^3)\right)
  \end{array}\right.
 \end{equation}
 with $\kappa_1= \frac{Z^2(\mu_1 +\nu_2) +\mu_2 +\nu_2}{2(1+Z)}$, $\kappa_2= \frac{Z(\mu_1 +\nu_2 +\mu_2 +\nu_2)}{2(1+Z)}$, $\sigma_1 = \sqrt{\frac{ZP_1'\left(\frac1Z\right) +P_2'(1)}{1+Z}}$, and $\sigma_2 = \sqrt{\frac{P_1'\left(\frac1Z\right) +ZP_2'(1)}{1+Z}}$.
\end{lemma}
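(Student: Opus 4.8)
The plan is a perturbative (Newton polygon) analysis of the characteristic polynomial
\[
 p(\lambda,r):=\det\{\mathbb{A}(\xi)-\lambda\mathbb{I}\},\qquad r:=|\xi|.
\]
The first observation is that, although $\mathbb{A}(\xi)$ carries $|\xi|^{-1}$ entries, $p$ is a genuine monic quartic in $\lambda$ whose four coefficients are polynomials in $r^{2}$ (this is exactly the displayed determinant); in particular $p(\lambda,0)=\lambda^{2}(\lambda^{2}+1+Z)$, so at $\xi=0$ the spectrum consists of the two simple ``acoustic'' eigenvalues $\pm i\sqrt{1+Z}$ together with a double eigenvalue at the origin. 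I would perturb these two groups separately: the acoustic pair produces $\lambda_{1,2}$, and the splitting of the double zero produces $\lambda_{3,4}$. A structural fact I would use throughout is that $p$ is even in $r$, so the set of root branches at $r$ coincides with the set at $-r$; since for $r$ small the two groups stay in disjoint discs, the symmetry $r\mapsto-r$ forces each acoustic branch to be even in $r$, and forces the branch emanating from the origin to have even real part and odd imaginary part. This is exactly why no $O(r)$ term appears in the real parts and no $O(r^{2})$ term appears in the imaginary part of $\lambda_{3,4}$.

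For the acoustic pair, $\pm i\sqrt{1+Z}$ are simple zeros of $p(\cdot,0)$, hence $\partial_\lambda p\neq0$ there. Regarding $p$ as a polynomial in $\lambda$ and in $s:=r^{2}$, the analytic implicit function theorem produces branches $\lambda_{1,2}(s)$ analytic near $s=0$ with $\lambda_{1,2}(0)=\pm i\sqrt{1+Z}$. Writing $\lambda_{1,2}(s)=\pm i\sqrt{1+Z}+\lambda^{(1)}_{1,2}\,s+O(s^{2})$ and differentiating the identity $p(\lambda_{1,2}(s),s)\equiv0$ at $s=0$ gives $\lambda^{(1)}_{1,2}=-\partial_s p/\partial_\lambda p$ evaluated at $(\pm i\sqrt{1+Z},0)$; computing the two partials from the explicit coefficients and simplifying produces exactly $-\kappa_1\pm i\,\sigma_1^{2}/(2\sqrt{1+Z})$, which is the asserted expansion once $s=|\xi|^{2}$ is restored.

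For the pair bifurcating from the origin I would first rescale, since the Newton polygon of $p$ at $(0,0)$ shows these roots vanish linearly in $r$: substituting $\lambda=r\zeta$ and dividing by $r^{2}$ yields a polynomial $q(\zeta,r)$ with coefficients polynomial in $r$ and with $q(\zeta,0)=(1+Z)\zeta^{2}+\big(P_1'(\frac1Z)+ZP_2'(1)\big)$, whose zeros $\zeta=\pm i\sigma_2$ are simple. The implicit function theorem then furnishes analytic branches $\zeta_{\pm}(r)$, $\zeta_{\pm}(0)=\pm i\sigma_2$, whence $\lambda_{3,4}=r\,\zeta_{\pm}(r)=\pm i\sigma_2\,r+r^{2}\,\zeta'_{\pm}(0)+O(r^{3})$; the parity observation shows $\zeta'_{\pm}(0)$ is real, and differentiating $q(\zeta_{\pm}(r),r)\equiv0$ at $r=0$ identifies it with $-\kappa_2$, giving $\lambda_{3,4}=-\kappa_2\,r^{2}+O(r^{4})\pm i(\sigma_2\,r+O(r^{3}))$. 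I would conclude by fixing $\eta\ll1$ so small that on $|\xi|\le\eta$ all four branches lie in disjoint neighbourhoods of $\{0,\pm i\sqrt{1+Z}\}$ and, for $0<|\xi|\le\eta$, the two conjugate pairs never collide (their imaginary parts are $\pm\sqrt{1+Z}+O(r^{2})$ and $\pm\sigma_2\,r+O(r^{3})$ with $\sigma_2>0$), which also recovers the four distinct eigenvalues noted just before the lemma. The routine-but-lengthy ingredient is the coefficient bookkeeping in the two implicit differentiations; an equivalent and perhaps tidier route is to factor $p=(\lambda^{2}+a_1\lambda+a_0)(\lambda^{2}+b_1\lambda+b_0)$ into the ``slow'' and the acoustic quadratic factors, to observe that uniqueness of this factorization together with the $r\mapsto-r$ symmetry makes $a_0,a_1,b_0,b_1$ analytic in $r^{2}$ (with $a_0(0)=a_1(0)=b_1(0)=0$ and $b_0(0)=1+Z$), and then to read off the expansions from Vieta's relations. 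The one point that genuinely needs care is the rescaling step for $\lambda_{3,4}$: one must verify that after division by $r^{2}$ the reduced polynomial $q$ is regular at $r=0$ with simple roots, so that the implicit function theorem applies directly and no full Puiseux expansion is needed.
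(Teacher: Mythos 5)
Your perturbative argument is correct and, in fact, supplies a genuine proof where the paper offers none: the paper simply declares the expansions to follow "by direct calculation and delicate analysis on the roots of the above quartic equation" and points to a reference on matrix exponentials, so there is no paper proof to compare against. Your route — treating $p$ as a monic quartic with coefficients polynomial in $s=|\xi|^2$, applying the analytic implicit function theorem at the simple roots $\pm i\sqrt{1+Z}$, and handling the double root at the origin by a Newton--polygon rescaling $\lambda = |\xi|\,\zeta$ so that the reduced polynomial $q(\zeta,0)=(1+Z)\zeta^2 + P_1'(1/Z)+ZP_2'(1)$ has simple zeros $\pm i\sigma_2$ — is the standard and cleanest way to obtain these expansions rigorously, and the two implicit differentiations do reproduce $\lambda_1'(0)=-\kappa_1 + i\sigma_1^2/(2\sqrt{1+Z})$ and $\zeta_\pm'(0)=-c_2/(2(1+Z))=-\kappa_2$ with $c_2 = Z(\mu_1+\nu_1+\mu_2+\nu_2)$.

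One small point to tighten: the claim that the $r\mapsto -r$ symmetry alone forces the slow branch to have even real part and odd imaginary part (and, for the acoustic branch, forces the $O(|\xi|^4)$ correction to split into a real and a purely imaginary piece) is not quite right as stated. Evenness of $p$ in $r$ gives $\zeta_-(r)=-\zeta_+(-r)$; to convert that into the real/imaginary parity of the coefficients you also need $\zeta_-(r)=\overline{\zeta_+(r)}$, which comes from the fact that $\mathbb{A}(\xi)$ is a real matrix, hence $p(\lambda,r)$ has real coefficients and its roots come in conjugate pairs. Combining the two symmetries gives $\overline{\zeta_+(r)}=-\zeta_+(-r)$, from which the stated parity (and in particular the reality of $\zeta_+'(0)$ and the absence of an $O(|\xi|^3)$ term in $\mathrm{Re}\,\lambda_{3,4}$) follows; you should add the real-coefficients observation explicitly. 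Finally, when you carry out the bookkeeping you will find $\kappa_1 = \frac{Z^2(\mu_1+\nu_1)+\mu_2+\nu_2}{2(1+Z)}$ and $\kappa_2 = \frac{Z(\mu_1+\nu_1+\mu_2+\nu_2)}{2(1+Z)}$; the $\nu_2$ appearing in place of $\nu_1$ in the lemma's displayed formulas for $\kappa_1,\kappa_2$ is a typographical slip in the paper, consistent with the characteristic polynomial written just above the lemma.
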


We establish the following estimates for the low--frequent part of the solutions $\widehat{U}(\xi, t)$ to the problem \eqref{3eq} and \eqref{4eq} while $N_i = 0$ with $i = 1,2,3,4$:
\begin{lemma}\label{eigenvalues}
 (i) For $|\xi|\le \eta$, we have
 \begin{equation}\nonumber
  \begin{split}
   &|\widehat\varrho_1(\xi, t)|, \quad |\widehat\varrho_2(\xi, t)|
   \lesssim \left(e^{- \frac{\kappa_1}2 |\xi|^2t} +e^{- \frac{\kappa_2}2 |\xi|^2t}\right) |\widehat{U}_0(\xi)|,
   \\
   &|\widehat n_1(\xi, t)|
   \lesssim e^{- \frac{\kappa_1}2 |\xi|^2t} \frac {|\sin (t{\rm Im} \lambda_1)| }{\sqrt{1+Z}} |\xi|^{-1}|Z\widehat{\varrho_{10}}(\xi) - \widehat{\varrho_{20}}(\xi)| +\left(e^{- \frac{\kappa_1}2 |\xi|^2t} +e^{- \frac{\kappa_2}2 |\xi|^2t} \right)|\widehat{U}_0(\xi)|,
  \end{split}
 \end{equation}
 \begin{equation}\nonumber
  |\widehat n_2(\xi, t)|
  \lesssim e^{- \frac{\kappa_1}2 |\xi|^2t} \frac {|\sin (t{\rm Im} \lambda_1)|}{\sqrt{1+Z}} |\xi|^{-1}||Z\widehat{\varrho_{10}}(\xi) - \widehat{\varrho_{20}}(\xi)| +\left(e^{- \frac{\kappa_1}2 |\xi|^2t} +e^{- \frac{\kappa_2}2 |\xi|^2t} \right)|\widehat{U}_0(\xi)|.
 \end{equation}
 and
 \begin{equation}\nonumber
  |\widehat{\nabla \phi} (\xi, t)|
  \lesssim e^{-\frac{\kappa_1}2 |\xi|^2t}|\xi|^{-1}|Z\widehat{\varrho_{10}}(\xi) - \widehat{\varrho_{20}}(\xi)| + \left(e^{- \frac{\kappa_1}2 |\xi|^2t} +e^{- \frac{\kappa_2}2 |\xi|^2t} \right)|\widehat{U}_0(\xi)|.
 \end{equation}

 (ii) For any $\xi$, we have
 \begin{equation}\nonumber
  |\widehat M_1(\xi, t)|
  \sim e^{- \mu_1|\xi|^2 t}|\widehat{ M_{10}}(\xi)|
 \end{equation}
 and
 \begin{equation}\nonumber
   |\widehat{ M}_2(\xi, t)|
   \sim e^{- \mu_2|\xi|^2 t}|\widehat{ M_{20}}(\xi)|.
 \end{equation}
\end{lemma}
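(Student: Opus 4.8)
The plan is to obtain both parts directly from the spectral representation \eqref{so-expr1}. Part (ii) is immediate: once the nonlinear terms are set to zero, \eqref{4eq} is just a pair of heat equations for $M_1$ and $M_2$, so their Fourier transforms are $\widehat{M_i}(\xi,t)=e^{-c_i|\xi|^2 t}\widehat{M_{i0}}(\xi)$ for the respective positive viscosity constants $c_i$, which gives the stated two--sided bounds. For part (i) I would start from $e^{t\mathbb{A}(\xi)}=\sum_{i=1}^4 e^{\lambda_i t}\mathbb{P}_i(\xi)$ with $\mathbb{P}_i(\xi)=\prod_{j\ne i}\frac{\mathbb{A}(\xi)-\lambda_j I}{\lambda_i-\lambda_j}$, and reduce everything to an entrywise estimate of the four projectors on $|\xi|\le\eta$, where the Taylor expansions of $\lambda_{1,2,3,4}$ established above are available.

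The crux is to track where the low--frequency singularities $|\xi|^{-1}$ come from. There are exactly two sources: the degenerate spectral gap $\lambda_3-\lambda_4=2i\sigma_2|\xi|+O(|\xi|^3)$ (the other five gaps all satisfy $|\lambda_i-\lambda_j|\gtrsim\sqrt{1+Z}$ on $|\xi|\le\eta$), and the Poisson--coupling entries $Z|\xi|^{-1}$ and $-|\xi|^{-1}$ of $\mathbb{A}(\xi)$, which sit only in the $n_1$-- and $n_2$--rows and the $\varrho_1$-- and $\varrho_2$--columns, and always in the fixed proportion $Z:(-1)$, i.e. they act on the data only through $Z\widehat{\varrho_{10}}-\widehat{\varrho_{20}}$; this reflects that the electric field only feels $Z\varrho_1-\varrho_2=\Delta\phi$. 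Carrying out the bookkeeping: for $i=1,2$ every denominator is bounded below, so $\mathbb{P}_i$ is $O(1)$ except for a single surviving $|\xi|^{-1}$ in the $n_1$-- and $n_2$--rows, which by the preceding remark multiplies $Z\widehat{\varrho_{10}}-\widehat{\varrho_{20}}$; for $i=3,4$ the factor $(\lambda_3-\lambda_4)^{-1}\sim|\xi|^{-1}$ is compensated by a matching factor $|\xi|$ produced in the $\varrho$--rows of the numerator $\mathbb{A}(\xi)-\lambda_j I$ (the acoustic mode has wave speed $\sigma_2|\xi|$), so that the $\varrho_1$-- and $\varrho_2$--components pick up no singular factor at all. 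Equivalently --- and this is the conceptual reason for the cancellations --- if one sets $w=Z\varrho_1-\varrho_2=\Delta\phi$, then applying $\Lambda$ to its evolution turns the Poisson term $\Lambda^{-1}w$ into $w$, so $w$ obeys a Klein--Gordon--type system whose coefficients are regular at $\xi=0$, driven by $\widehat{w}_0=Z\widehat{\varrho_{10}}-\widehat{\varrho_{20}}$ and by $\widehat{\partial_t w}|_{t=0}=-|\xi|(Z\widehat{n_{10}}-\widehat{n_{20}})$; from this one reads off that $\widehat w$ carries no $|\xi|^{-1}$ and in fact $|\widehat w(\xi,t)|\lesssim e^{-\frac{\kappa_1}{2}|\xi|^2 t}|Z\widehat{\varrho_{10}}-\widehat{\varrho_{20}}|+|\xi|\big(e^{-\frac{\kappa_1}{2}|\xi|^2 t}+e^{-\frac{\kappa_2}{2}|\xi|^2 t}\big)|\widehat{U}_0|$.

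With these structural facts the assembly is routine. Shrinking $\eta$ if needed, $\mathrm{Re}\,\lambda_{1,2}=-\kappa_1|\xi|^2+O(|\xi|^4)\le-\frac{\kappa_1}{2}|\xi|^2$ and $\mathrm{Re}\,\lambda_{3,4}\le-\frac{\kappa_2}{2}|\xi|^2$ on $|\xi|\le\eta$, which supplies the exponential factors $e^{-\frac{\kappa_1}{2}|\xi|^2 t}$ and $e^{-\frac{\kappa_2}{2}|\xi|^2 t}$. Since $\mathbb{A}(\xi)$ has real entries and $\lambda_2=\overline{\lambda_1}$ one has $\mathbb{P}_2=\overline{\mathbb{P}_1}$, so $e^{\lambda_1 t}\mathbb{P}_1+e^{\lambda_2 t}\mathbb{P}_2=2e^{\mathrm{Re}\,\lambda_1 t}\big(\cos(t\,\mathrm{Im}\,\lambda_1)\,\mathrm{Re}\,\mathbb{P}_1-\sin(t\,\mathrm{Im}\,\lambda_1)\,\mathrm{Im}\,\mathbb{P}_1\big)$; the singular $|\xi|^{-1}$ entry of $\mathbb{P}_1$ in the $n$--rows is purely imaginary to leading order, hence it appears multiplied by $\sin(t\,\mathrm{Im}\,\lambda_1)$, which is precisely the form of the stated bounds for $\widehat n_1,\widehat n_2$; the $\widehat\varrho_1,\widehat\varrho_2$ bounds follow from the no--singular--factor property above, and $\widehat{\nabla\phi}(\xi,t)=-i\xi|\xi|^{-2}\widehat w(\xi,t)$ gives $|\widehat{\nabla\phi}|=|\xi|^{-1}|\widehat w|$, so inserting the bound for $\widehat w$ yields the claimed estimate for $\widehat{\nabla\phi}$. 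The main difficulty is exactly the cancellation analysis of the second paragraph --- showing that the $|\xi|^{-1}$ singularities vanish entirely from the density components and are confined, in the momentum components, to the $\sin$--term and to the single combination $Z\widehat{\varrho_{10}}-\widehat{\varrho_{20}}$; everything else is standard analytic perturbation of simple eigenvalues and their spectral projectors.
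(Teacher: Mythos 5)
Your proposal follows the same general route as the paper --- decompose $e^{t\mathbb{A}(\xi)}=\sum_i e^{\lambda_i t}\mathbb{P}_i(\xi)$ and control each projector on $|\xi|\le\eta$ --- but the paper does this by \emph{explicitly computing} the leading term of each $\mathbb{P}_i$ (and then of the full sum $\sum_i e^{\lambda_i t}\mathbb{P}_i$, displayed as \eqref{so-max1}), whereas you try to replace the computation with a singularity-bookkeeping argument. For $i=1,2$ your bookkeeping is essentially correct: the denominators are $O(1)$, the only $|\xi|^{-1}$ entries of $\mathbb{A}(\xi)$ sit in the $n$--rows and $\varrho$--columns in the ratio $Z:-1$, a product of three factors $\mathbb{A}-\lambda_j I$ preserves at most a single $|\xi|^{-1}$ there, and by $\lambda_2=\overline{\lambda_1}$, $\mathbb{P}_2=\overline{\mathbb{P}_1}$ the singular part pairs with $\sin(t\,\mathrm{Im}\,\lambda_1)$. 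Your $\widehat{\nabla\phi}$ estimate via $\widehat{\nabla\phi}=-i\xi|\xi|^{-2}\widehat w$ is also fine once the bound on $\widehat w=Z\widehat\varrho_1-\widehat\varrho_2$ is in hand.

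The gap is in the treatment of $\mathbb{P}_3,\mathbb{P}_4$. You argue only that the $\varrho$--rows of these projectors are $O(1)$ (the matching $|\xi|$ from $-\lambda_4\sim i\sigma_2|\xi|$ on the $\varrho$--diagonal of $\mathbb{A}-\lambda_4 I$ offsets the $|\lambda_3-\lambda_4|^{-1}\sim|\xi|^{-1}$). But the stated bounds for $\widehat n_1,\widehat n_2$ also require the \emph{$n$--rows} of $\mathbb{P}_{3,4}$ to be $O(1)$: the only $|\xi|^{-1}$ allowed in the lemma carries the factor $e^{-\frac{\kappa_1}{2}|\xi|^2 t}$ coming from $\lambda_{1,2}$, and no $e^{-\frac{\kappa_2}{2}|\xi|^2t}|\xi|^{-1}$ term is permitted. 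A naive singularity count on the $n$--rows of $\mathbb{P}_{3,4}$ does \emph{not} give $O(1)$: the numerator $(\mathbb{A}-\lambda_1 I)(\mathbb{A}-\lambda_2 I)(\mathbb{A}-\lambda_4 I)$ has entries of apparent size $|\xi|^{-1}$ in the $n$--rows (e.g. the path $(n_1,n_1)\to(n_1,\varrho_1)\to(\varrho_1,\varrho_1)$ with the middle factor singular), and dividing by $(\lambda_3-\lambda_1)(\lambda_3-\lambda_2)(\lambda_3-\lambda_4)\sim 2i\sigma_2(1+Z)|\xi|$ then suggests $|\xi|^{-2}$. That this is in fact $O(1)$ --- the paper's explicit $\mathbb{P}_3$ has $(n_1,\varrho_1)$ entry $-\tfrac{i\sigma_2}{2(1+Z)}$ --- relies on a double cancellation in the numerator (both the $O(|\xi|^{-1})$ and the $O(1)$ contributions of the relevant paths sum to zero, leaving $O(|\xi|)$). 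Your argument does not exhibit this cancellation, and your conceptual $w$--reformulation gives a bound on $\widehat w$ and hence on $\widehat{\nabla\phi}$ but does not propagate to a bound on $\widehat n_i$ without effectively redoing the projector computation, since the $n_i$--equations in \eqref{3eq} still contain the singular source $\Lambda^{-1}w$. So you either need to carry the Taylor expansion of $\mathbb{P}_{3,4}$ to the required order in the $n$--rows (which is what the paper does), or supply an independent reason --- for instance, exploiting the structure $\mathbb{A}^2$ (whose $(n,\varrho)$ entries are already $O(|\xi|)$ because the singular columns of $\mathbb{A}$ are annihilated by the singular rows) --- why the $n$--rows of $\mathbb{P}_{3,4}$ are nonsingular.
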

\begin{proof}
 Part $(ii)$ can be easily derived from the standard heat equation for $ M_1$ and $  M_2$.
 In order to prove part $(i)$, we express $\mathbb{P}_i(i=1,2,3,4)$ as
 \begin{equation}\nonumber
  \mathbb{P}_1(\xi)
  =\begingroup
   \renewcommand*{\arraystretch}{1.5}
   \left(\begin{array}{cccc}
   \frac{Z}{2(1+Z)} & 0 & -\frac{1}{2(1+Z)} & 0
   \\
   -\frac{iZ|\xi|^{-1}}{2\sqrt{1+Z}} & \frac{Z}{2(1+Z)} & \frac{i|\xi|^{-1}}{2\sqrt{1+Z}} & -\frac{1}{2(1+Z)}
   \\
   -\frac{Z}{2(1+Z)} & 0 & \frac{1}{2(1+Z)} & 0
   \\
   \frac{iZ|\xi|^{-1}}{2\sqrt{1+Z}} & -\frac{Z}{2(1+Z)} & -\frac{i|\xi|^{-1}}{2\sqrt{1+Z}} & \frac{1}{2(1+Z)}
   \end{array}\right)
  \endgroup + \left(O(|\xi|) +iO(|\xi|)\right)\mathbb{J},
 \end{equation}

 \begin{equation}\nonumber
  \mathbb{P}_2(\xi)
  =\begingroup
   \renewcommand*{\arraystretch}{1.5}
   \left(\begin{array}{cccc}
   \frac{Z}{2(1+Z)} & 0 & -\frac{1}{2(1+Z)} & 0
   \\
   \frac{iZ|\xi|^{-1}}{2\sqrt{1+Z}} & \frac{Z}{2(1+Z)} & -\frac{i|\xi|^{-1}}{2\sqrt{1+Z}} & -\frac{1}{2(1+Z)}
   \\
   -\frac{Z}{2(1+Z)} & 0 & \frac{1}{2(1+Z)} & 0
   \\
   -\frac{iZ|\xi|^{-1}}{2\sqrt{1+Z}} & -\frac{Z}{2(1+Z)} & \frac{i|\xi|^{-1}}{2\sqrt{1+Z}} & \frac{1}{2(1+Z)}
   \end{array}\right)
  \endgroup + \left(O(|\xi|) +iO(|\xi|)\right)\mathbb{J},
 \end{equation}

 \begin{equation}\nonumber
  \mathbb{P}_3(\xi)
  =\begingroup
   \renewcommand*{\arraystretch}{1.5}
   \left(\begin{array}{cccc}
   \frac1{2(1+Z)} & \frac{i}{2(1+Z)\sigma_2} & \frac 1{2(1+Z)} & \frac{i}{2(1+Z)\sigma_2}
   \\
   -\frac{i\sigma_2}{2(1+Z)} & \frac1{2(1+Z)} & -\frac{i\sigma_2}{2(1+Z)} & \frac 1{2(1+Z)}
   \\
   \frac Z{2(1+Z)} & \frac{iZ}{2(1+Z)\sigma_2} & \frac Z{2(1+Z)} & \frac{iZ}{2(1+Z)\sigma_2}
   \\
   -\frac{iZ\sigma_2}{2(1+Z)} & \frac Z{2(1+Z)} & -\frac{iZ\sigma_2}{2(1+Z)} & \frac Z{2(1+Z)}
   \end{array}\right)
  \endgroup + \left(O(|\xi|) +iO(|\xi|)\right)\mathbb{J},
 \end{equation}
 and
 \begin{equation}\nonumber
  \mathbb{P}_4(\xi)
  =\begingroup
   \renewcommand*{\arraystretch}{1.5}
   \left(\begin{array}{cccc}
   \frac1{2(1+Z)} & -\frac{i}{2(1+Z)\sigma_2} & \frac 1{2(1+Z)} & -\frac{i}{2(1+Z)\sigma_2}
   \\
   \frac{i\sigma_2}{2(1+Z)} & \frac1{2(1+Z)} & \frac{i\sigma_2}{2(1+Z)} & \frac 1{2(1+Z)}
   \\
   \frac Z{2(1+Z)} & -\frac{iZ}{2(1+Z)\sigma_2} & \frac Z{2(1+Z)} & -\frac{iZ}{2(1+Z)\sigma_2}
   \\
   \frac{iZ\sigma_2}{2(1+Z)} & \frac Z{2(1+Z)} & \frac{iZ\sigma_2}{2(1+Z)} & \frac Z{2(1+Z)}
   \end{array}\right)
  \endgroup + \left(O(|\xi|) +iO(|\xi|)\right)\mathbb{J},
 \end{equation}
 where $\mathbb{J}$ is a 4--order matrix with all elements equal to 1.

 Then we can conclude that
 \begin{equation}\label{so-max1}
  \begin{split}
  \sum_{i = 1}^4e^{\lambda_i t} \mathbb{P}_i(\xi)
  =&\begingroup
   \renewcommand*{\arraystretch}{1.8}
   \left(\begin{array}{cccc}
   \frac {Zg^{1,2}_+ + g^{3,4}_+}{2(1+Z)}  & \frac{ig^{3,4}_-}{2(1+Z)\sigma_2} & \frac {g^{3,4}_+ - g^{1,2}_+}{2(1+Z)} & \frac{ig^{3,4}_-}{2(1+Z)\sigma_2}
   \\
   -\frac{iZ|\xi|^{-1}g^{1,2}_-}{2\sqrt{1+Z}} - \frac{i\sigma_2g^{3,4}_-}{2(1+Z)} & \frac {Zg^{1,2}_+ + g^{3,4}_+}{2(1+Z)} & \frac{i|\xi|^{-1}g^{1,2}_-}{2\sqrt{1+Z}} - \frac{i\sigma_2g^{3,4}_-}{2(1+Z)} & \frac {g^{3,4}_+ - g^{1,2}_+}{2(1+Z)}
   \\
   \frac {Z(g^{3,4}_+ - g^{1,2}_+)}{2(1+Z)} & \frac{iZg^{3,4}_-}{2(1+Z)\sigma_2} & \frac {g^{1,2}_+ + Zg^{3,4}_+}{2(1+Z)} & \frac{iZg^{3,4}_-}{2(1+Z)\sigma_2}
   \\
   \frac{iZ|\xi|^{-1}g^{1,2}_-}{2\sqrt{1+Z}} - \frac{iZ\sigma_2g^{3,4}_-}{2(1+Z)} & \frac {Z(g^{3,4}_+ - g^{1,2}_+)}{2(1+Z)} & -\frac{i|\xi|^{-1}g^{1,2}_-}{2\sqrt{1+Z}} - \frac{iZ\sigma_2g^{3,4}_-}{2(1+Z)} & \frac {g^{1,2}_+ + Zg^{3,4}_+}{2(1+Z)}
   \end{array}\right)
   \endgroup
   \\
   &\qquad+\left(O(|\xi|) +iO(|\xi|)\right)\left(e^{- \kappa_1 |\xi|^2t +O(|\xi|^4)t} +e^{- \kappa_2 |\xi|^2t +O(|\xi|^4)t}\right) \mathbb{J}
  \end{split}
 \end{equation}
 with
 \begin{equation}\nonumber
  g_\pm^{i,j}
  = e^{t\lambda_i} \pm e^{t\lambda_j}, \quad for \quad i,j =1,2,3,4.
 \end{equation}
 By plugging \eqref{so-max1} to \eqref{so-expr1}, we can complete the proof of part $(i)$.
\end{proof}

\noindent\textbf{2.4 Upper Decay Rate for the Linear System}

Thanks to Lemma \ref{eigenvalues}, we can estimate the decay rates on the lower--frequency of the solutions to the linear systems \eqref{3eq} and \eqref{4eq} while $N^m_1= N^m_2 = 0$ as follows:
\begin{Proposition}\label{li-de-es1}
 For any $1\le p\le 2$ and $k=0,1, \cdots, l$, there exists a positive constant $C$ which is independent of $t$ such that
 \begin{equation}\label{li-varrho-1}
  \|\nabla^k( \varrho_1 ^L, \varrho_2^L )\|_{L^2}
  \le C (1+t)^{- \frac34 -\frac k2}\|U_0^L\|_{L^1} ,
 \end{equation}
 \begin{equation}\label{li-v-1}
  \|\nabla^k( n_1 ^L, n_2^L, (\nabla\phi)^L)\|_{L^2}
  \le C (1+t)^{- \frac34 -\frac k2}\|U_0^L\|_{L^1} +C (1+t)^{- \frac32(\frac1p-\frac12) -\frac k2} \|(\nabla \phi_0)^L\|_{L^p},
 \end{equation}
 and
 \begin{equation}\label{li-omega-1}
  \|\nabla^k(  M_1,  M_2)\|_{L^2}
  \le C (1+t)^{- \frac34 -\frac k2} \|U_0\|_{L^1}.
 \end{equation}
 \begin{proof}
  Here we only prove the decay on $\nabla^k(\nabla\phi)^L$ for $1<p<2$. The other cases can be proved in the similar way or the proof can be seen in our previous work \cite{C1, WZZ2}. Indeed, by Lemma \ref{eigenvalues}, Plancherel theorem and Hausdorff--Young's inequality, we have that for each $0\le k\le l$ and $|\xi|\le \eta$,
  \begin{equation}\nonumber
   \begin{split}
   \|\nabla^k(\nabla\phi)^L\|_{L^2}^2
   &=\||\xi|^k\widehat{(\nabla\phi)^L}(\xi)\|_{L^2}^2
   \\
   &\lesssim \int_{|\xi|\le \eta}|\xi|^{2k}\left(e^{-\frac{\kappa_1}2 |\xi|^2t}|\xi|^{-1}|Z\widehat{\varrho_{10}}(\xi) - \widehat{\varrho_{20}}(\xi)| + \left(e^{- \frac{\kappa_1}2 |\xi|^2t} +e^{- \frac{\kappa_2}2 |\xi|^2t} \right)|\widehat{U}_0(\xi)|\right)^2d\xi
   \\
   &\lesssim \left\||\xi|^{-1}(Z\widehat{\varrho_{10}}(\xi) -\widehat{\varrho_{20}}(\xi))\right\|_{L^q}^2 \left(\int_{|\xi|\le \eta}|\xi|^{\frac{2qk}{q-2}}e^{-\frac{q\kappa_1}{q-2}|\xi|^2t}d\xi \right)^{\frac{q-2}q}
   \\
   &\qquad + \|\widehat{U_0}\|_{L^\infty(|\xi|\le\eta)}^2 \int_{|\xi|\le\eta}|\xi|^{2k}
   \left(e^{-\kappa_1|\xi|^2t} +e^{-\kappa_2|\xi|^2t}\right)d\xi
   \\
   &\lesssim (1+t)^{- 3\left(\frac12 - \frac1q\right) - k}\|\widehat{\nabla\phi_0}\|_{L^q(|\xi|\le\eta)}^2 +(1+t)^{-\frac 32 -k}\|\widehat{U_0}\|_{L^\infty(|\xi|\le\eta)}^2
   \\
   &\lesssim (1+t)^{- 3\left(\frac1p - \frac12\right) - k}\|(\nabla\phi_0)^L\|_{L^p}^2 +(1+t)^{-\frac 32 -k}\|U_0^L\|_{L^1}^2,
   \end{split}
  \end{equation}
  where $\frac1p +\frac1q = 1$.
 \end{proof}
\end{Proposition}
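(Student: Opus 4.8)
The plan is to obtain all three bounds directly from the pointwise low-frequency estimates of Lemma~\ref{eigenvalues}, using Plancherel's theorem, the Hausdorff--Young inequality, and the elementary decay estimate
\[
 \int_{|\xi|\le\eta}|\xi|^{2a}e^{-c|\xi|^2t}\,d\xi \;\lesssim\; (1+t)^{-\frac32-a},\qquad a\ge 0,
\]
which follows by bounding the integrand crudely for $t\le 1$ and, for $t\ge 1$, rescaling $\zeta=\sqrt t\,\xi$. First I would treat the densities: on $|\xi|\le\eta$ Lemma~\ref{eigenvalues}(i) gives $|\widehat\varrho_i(\xi,t)|\lesssim\big(e^{-\frac{\kappa_1}2|\xi|^2t}+e^{-\frac{\kappa_2}2|\xi|^2t}\big)|\widehat U_0(\xi)|$, and since $\varphi\equiv 1$ there one has $\|\widehat U_0\|_{L^\infty(|\xi|\le\eta)}=\|\widehat{U_0^L}\|_{L^\infty(|\xi|\le\eta)}\le\|U_0^L\|_{L^1}$; hence by Plancherel
\[
 \|\nabla^k(\varrho_1^L,\varrho_2^L)\|_{L^2}^2 \;\lesssim\; \|U_0^L\|_{L^1}^2\int_{|\xi|\le\eta}|\xi|^{2k}\big(e^{-\kappa_1|\xi|^2t}+e^{-\kappa_2|\xi|^2t}\big)\,d\xi \;\lesssim\; \|U_0^L\|_{L^1}^2(1+t)^{-\frac32-k},
\]
and taking square roots yields \eqref{li-varrho-1}.

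Next I would turn to $n_i^L$ and $(\nabla\phi)^L$, where the extra term in the spectral bounds is $e^{-\frac{\kappa_1}2|\xi|^2t}|\xi|^{-1}|Z\widehat\varrho_{10}-\widehat\varrho_{20}|$. The key observation is that the Poisson equation for the initial data, $\Delta\phi_0=Z\varrho_{10}-\varrho_{20}$, gives $|\xi|^{-1}|Z\widehat\varrho_{10}(\xi)-\widehat\varrho_{20}(\xi)|=|\xi|\,|\widehat\phi_0(\xi)|=|\widehat{\nabla\phi_0}(\xi)|$, so (discarding the harmless $\sin$-factor) this term equals $e^{-\frac{\kappa_1}2|\xi|^2t}|\widehat{\nabla\phi_0}(\xi)|$. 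Its contribution to the squared $L^2$-norm of the $k$-th derivative I would estimate by Hölder's inequality with exponents $\frac q2$ and $\frac q{q-2}$, $\frac1p+\frac1q=1$:
\[
 \int_{|\xi|\le\eta}|\xi|^{2k}e^{-\kappa_1|\xi|^2t}|\widehat{\nabla\phi_0}|^2\,d\xi \;\le\; \Big(\int_{|\xi|\le\eta}|\widehat{\nabla\phi_0}|^q\Big)^{\frac2q}\Big(\int_{\mathbb{R}^3}|\xi|^{\frac{2kq}{q-2}}e^{-\frac{q\kappa_1}{q-2}|\xi|^2t}\,d\xi\Big)^{\frac{q-2}q};
\]
by Hausdorff--Young the first factor is $\le\|(\nabla\phi_0)^L\|_{L^p}^2$, and rescaling shows the second is $\lesssim(1+t)^{-k-\frac32+\frac3q}=(1+t)^{-k-3(\frac1p-\frac12)}$. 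Adding this to the $\widehat U_0$-contribution, estimated exactly as for the densities, gives \eqref{li-v-1}; the endpoints are identical, with $q=2$ and Plancherel replacing Hölder when $p=2$, and $q=\infty$ when $p=1$.

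Finally, for the incompressible parts $M_i$ solves a pure heat equation, $\widehat M_i(\xi,t)=e^{-\mu_i|\xi|^2t}\widehat M_{i0}(\xi)$, so I would split $\mathbb{R}^3$ at $|\xi|=\eta$: on $|\xi|\le\eta$ repeat the first step with $|\widehat M_{i0}(\xi)|\lesssim|\widehat m_{i0}(\xi)|\le\|m_{i0}\|_{L^1}\lesssim\|U_0\|_{L^1}$ (the symbol of $\Lambda^{-1}{\rm curl}$ is bounded), while on $|\xi|>\eta$ the Gaussian decays exponentially in $t$ uniformly in $\xi$, so $\|\nabla^k M_i^H(t)\|_{L^2}\lesssim e^{-ct}\|\nabla^k M_{i0}\|_{L^2}$, dominated by $(1+t)^{-\frac34-\frac k2}$ once one also invokes the $H^l$-bound for $k\le l$; summing gives \eqref{li-omega-1}. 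I expect the one genuinely non-routine point to be the identity $|\xi|^{-1}|Z\widehat\varrho_{10}-\widehat\varrho_{20}|=|\widehat{\nabla\phi_0}|$ from the Poisson equation: it is what turns the dangerous $|\xi|^{-1}$ singularity in the $n_i$- and $\nabla\phi$-estimates into a quantity controlled by the low-frequency norm $\|(\nabla\phi_0)^L\|_{L^p}$, and thus the precise mechanism by which the electric field governs the momentum decay; the remaining work -- choosing the Hölder pair and evaluating the Gaussian integrals -- is bookkeeping.
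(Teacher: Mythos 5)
Your argument is correct and follows essentially the same route as the paper: pointwise spectral bounds from Lemma \ref{eigenvalues}, Plancherel, H\"older with the conjugate pair $(q/2, q/(q-2))$, Hausdorff--Young, and the rescaled Gaussian integral. The one step the paper leaves implicit in passing from $\bigl\||\xi|^{-1}(Z\widehat\varrho_{10}-\widehat\varrho_{20})\bigr\|_{L^q}$ to $\|\widehat{\nabla\phi_0}\|_{L^q(|\xi|\le\eta)}$ is precisely the Poisson-equation identity $|\xi|^{-1}|Z\widehat\varrho_{10}(\xi)-\widehat\varrho_{20}(\xi)|=|\widehat{\nabla\phi_0}(\xi)|$, which you correctly identify as the mechanism converting the $|\xi|^{-1}$ singularity into the low-frequency norm of $\nabla\phi_0$; making it explicit is a useful clarification rather than a departure. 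Your treatment of the $M_i$ estimate (splitting at $|\xi|=\eta$ and absorbing the exponentially decaying high-frequency tail via the $H^l$ bound) is the natural reading of the paper's terse \eqref{li-omega-1}, where strictly the right-hand side should be understood to include the full $L^1$-norm of the momentum data and, for the high-frequency piece, the a priori $H^l$ bound.
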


In order to estimate the convergence rate for the system \eqref{2eq-m}, we also need to analyze the decay of the nonlinear terms. To this end, we rewrite the system \eqref{3eq} as
\begin{equation}\label{5eq-noli}
 \left\{\begin{array}{lll}
  U_t = BU + \mathcal{N},
  \\
  U|_{t=0} = U_0,
 \end{array}\right.
\end{equation}
with
\begin{equation}\nonumber
 \mathcal{N}
 = (0, \Lambda^{-1}{\rm div}N^m_1, 0, \Lambda^{-1}{\rm div}N^m_2)^T.
\end{equation}
Then the solution of \eqref{5eq-noli} can be expressed as
\begin{equation}\label{so-ex}
 U =e^{tB}\ast U_0 + \int_0^t e^{(t-\tau)B}\ast \mathcal{N}(\tau)d\tau .
\end{equation}

Define
\begin{equation}\nonumber
 \mathcal{S}(x, \tau)
 := (S^\varrho_1(x, \tau), S^n_1(x, \tau), S^\varrho_2(x, \tau), S^n_2(x, \tau))^T
 = e^{(t-\tau)B}\ast \mathcal{N}(\tau)
\end{equation}
and its Fourier transform
\begin{equation}\nonumber
 \mathfrak{F}[\mathcal{S}(\tau)]
 := (\widehat{S^\varrho_1}(\xi, \tau), \widehat{S^n_1}(\xi, \tau), \widehat{S^\varrho_2}(\xi, \tau), \widehat{S^n_2}(\xi, \tau))^T.
\end{equation}

Now we complement the decay estimates on the nonlinear term of the expression \eqref{so-ex} of the solution $U(x, t)$ in the following:
\begin{Proposition}\label{prop-decay4}
 It holds for $k=0,1, \cdots, l$ that
 \begin{equation}\label{decay-S-varrho}
  \|\nabla ^k\left((S^\varrho_1)^L, (S^\varrho_2)^L\right)(\tau)\|_{L^2}
  \lesssim (1+t-\tau)^{- \frac54 -\frac k2}(\|(\nabla\phi)^L(\tau)\|_{L^2}^2 +\|(\mathcal{N}^L, \mathbb{F}_1^L, \mathbb{F}_2^L)(\tau)\|_{L^1}),
 \end{equation}
 \begin{equation}\label{decay-S-varrho2}
  \|\nabla ^k\left((S^\varrho_1)^L, (S^\varrho_2)^L\right)(\tau)\|_{L^2}
  \lesssim (1+t-\tau)^{- \frac12}\|\nabla^k\mathcal{N}^L(\tau)\|_{L^2},
 \end{equation}
 \begin{equation}\label{decay-S-n}
  \|\nabla ^k\left((S^\varrho_1)^L, (S^\varrho_2)^L, (S^n_1)^L, (S^n_2)^L, \nabla\Delta^{-1}\left(Z(S^\varrho_1)^L- (S^\varrho_2)^L\right)\right)(\tau)\|_{L^2}
  \lesssim (1+t-\tau)^{- \frac34 -\frac k2}\|\mathcal{N}^L(\tau)\|_{L^1},
 \end{equation}
 and
 \begin{equation}\label{decay-S-n2}
  \|\nabla^k\left((S^\varrho_1)^L, (S^\varrho_2)^L, (S^n_1)^L, (S^n_2)^L, \nabla\Delta^{-1}\left(Z(S^\varrho_1)^L- (S^\varrho_2)^L\right)\right)(\tau)\|_{L^2}
  \lesssim (1+t-\tau)^{- \frac34}\|\nabla^k\mathcal{N}^L(\tau)\|_{L^1}.
 \end{equation}

 \begin{proof}
  We can derive from the expression \eqref{so-max1} of $e^{t\mathbb{A}(\xi)}$ that for $|\xi|\le \eta$,
  \begin{equation}\label{S3}
   \begin{split}
   \mathfrak{F}[\mathcal{S}(\tau)]
   &=\begingroup
     \renewcommand*{\arraystretch}{2.2}\left(\begin{array}{c}
     \displaystyle \frac{ig^{3,4}_-}{2(1+Z)\sigma_2}\mathfrak{F}\left[\Lambda^{-1}{\rm div}(Z\varrho_1\nabla\phi - {\rm div}\mathbb{F}_1 -\varrho_2\nabla\phi - {\rm div}\mathbb{F}_2)\right]
     \\
     \displaystyle \frac{Zg^{1,2}_+ +g^{3,4}_+}{2(1+Z)}\mathfrak{F}[\Lambda^{-1}{\rm div}N^m_1] +\frac{g^{3,4}_+ -g^{1,2}_+}{2(1+Z)}\mathfrak{F}[\Lambda^{-1}{\rm div}N^m_2]
     \\
     \displaystyle \frac{iZg^{3,4}_-}{2(1+Z)\sigma_2}\mathfrak{F}\left[\Lambda^{-1}{\rm div}(Z\varrho_1\nabla\phi - {\rm div}\mathbb{F}_1 -\varrho_2\nabla\phi - {\rm div}\mathbb{F}_2)\right]
     \\
     \displaystyle \frac{Z(g^{3,4}_+ -g^{1,2}_+)}{2(1+Z)}\mathfrak{F}[\Lambda^{-1}{\rm div}N^m_1] +\frac{ g^{1,2}_+ +Zg^{3,4}_+}{2(1+Z)}\mathfrak{F}[\Lambda^{-1}{\rm div}N^m_2]
     \end{array}\right)
     \endgroup
     \\
     &\displaystyle \qquad +\left(O(|\xi|) +iO(|\xi|)\right)\left(e^{- \kappa_1 |\xi|^2(t-\tau) +O(|\xi|^4)(t-\tau)} +e^{- \kappa_2 |\xi|^2(t-\tau) +O(|\xi|^4)(t-\tau)}\right)\mathbb{J} \widehat{\mathcal{N}}
     \\
     &=\begingroup
     \renewcommand*{\arraystretch}{2.2}\left(\begin{array}{c}
     \displaystyle \frac{ig^{3,4}_-}{2(1+Z)\sigma_2}\mathfrak{F}\left[\Lambda^{-1}{\rm div}{\rm div}\left( \nabla\phi\otimes\nabla\phi - \frac{|\nabla\phi|^2}2\mathbb{I} - \mathbb{F}_1- \mathbb{F}_2\right)\right]
     \\
     \displaystyle \frac{Zg^{1,2}_+ +g^{3,4}_+}{2(1+Z)}\mathfrak{F}[\Lambda^{-1}{\rm div}N^m_1] +\frac{g^{3,4}_+ -g^{1,2}_+}{2(1+Z)}\mathfrak{F}[\Lambda^{-1}{\rm div}N^m_2]
     \\
     \displaystyle \frac{iZg^{3,4}_-}{2(1+Z)\sigma_2}\mathfrak{F} \left[\Lambda^{-1}{\rm div}{\rm div}\left( \nabla\phi\otimes\nabla\phi - \frac{|\nabla\phi|^2}2\mathbb{I} - \mathbb{F}_1- \mathbb{F}_2\right)\right]
     \\
     \displaystyle \frac{Z(g^{3,4}_+ -g^{1,2}_+)}{2(1+Z)}\mathfrak{F}[\Lambda^{-1}{\rm div}N^m_1] +\frac{ g^{1,2}_+ +Zg^{3,4}_+}{2(1+Z)}\mathfrak{F}[\Lambda^{-1}{\rm div}N^m_2]
     \end{array}\right),
     \endgroup
     \\
     &\displaystyle \qquad+\left(O(|\xi|) +iO(|\xi|)\right)\left(e^{- \kappa_1|\xi|^2(t-\tau) +O(|\xi|^4)(t-\tau)} +e^{- \kappa_2 |\xi|^2(t-\tau) +O(|\xi|^4)(t-\tau)}\right) \mathbb{J}\widehat{\mathcal{N}}
     \\
   \end{split}
  \end{equation}
  where $\mathbb{F}_1$ and $\mathbb{F}_2$ are defined in \eqref{N1N2-m}. Hence by \eqref{S3}, Plancherel theorem and Hausdorff--Young's inequality, we have that for each $0\le k\le l$ and $|\xi|\le \eta$,
  \begin{equation}\nonumber
   \begin{split}
   &\|\nabla^k\left((S^\varrho_1)^L, (S^\varrho_2)^L\right)\|_{L^2}^2
   \\
   & =\||\xi|^k\left(\mathfrak{F}[(S^\varrho_1)^L], \mathfrak{F}[(S^\varrho_2)^L]\right)\|_{L^2}^2
   \\
   &\lesssim \int_{|\xi|\le\eta}|\xi|^{2k}e^{- 2\kappa_2 |\xi|^2(t-\tau) +O(|\xi|^4)(t-\tau)} |\xi|^2\mathfrak{F}\left[\left( \nabla\phi\otimes\nabla\phi - \frac{|\nabla\phi|^2}2\mathbb{I} - \mathbb{F}_1- \mathbb{F}_2\right)\right]^2(\tau) d\xi
   \\
   & \qquad +\int_{|\xi|\le\eta}|\xi|^{2k}\left(O(|\xi|) +iO(|\xi|)\right)^2\left(e^{- \kappa_1 |\xi|^2(t-\tau) +O(|\xi|^4)(t-\tau)} +e^{- \kappa_2 |\xi|^2(t-\tau) +O(|\xi|^4)(t-\tau)}\right)^2 |\widehat{\mathcal{N}}(\tau)|^2 d\xi
   \\
   &\lesssim (1+ t- \tau)^{-\frac52 -k}\left(\Bigg\|\mathfrak{F} \left[\left( \nabla\phi\otimes\nabla\phi - \frac{|\nabla\phi|^2}2\mathbb{I} - \mathbb{F}_1- \mathbb{F}_2\right)\right](\tau)\Bigg\|_{L^\infty(|\xi|\le \eta)}^2 +\big\|\widehat{\mathcal{N}}(\tau)\big\|_{L^\infty(|\xi|\le \eta)}^2\right)
   \\
   &\lesssim (1+ t- \tau)^{-\frac52 -k}\left(\Bigg\|\left(\nabla\phi\otimes\nabla\phi - \frac{|\nabla\phi|^2}2\mathbb{I} - \mathbb{F}_1- \mathbb{F}_2\right)^L(\tau)\Bigg\|_{L^1}^2 +\|\mathcal{N}^L(\tau)\|_{L^1}^2\right),
   \end{split}
  \end{equation}
  which gets rise to \eqref{decay-S-varrho}. Obviously, \eqref{decay-S-varrho2}, \eqref{decay-S-n} and \eqref{decay-S-n2} can be obtained in the similar way.
 \end{proof}
\end{Proposition}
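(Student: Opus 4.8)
The plan is to combine the explicit form \eqref{so-max1} of the low--frequency solution semigroup with the special divergence structure of the nonlinearity $\mathcal{N}$, following the same Plancherel--Hausdorff--Young scheme already used for Proposition \ref{li-de-es1}. Concretely, for $|\xi|\le\eta$ I would write $\mathfrak{F}[\mathcal{S}(\tau)]=\big(\sum_{i=1}^{4}e^{\lambda_i(t-\tau)}\mathbb{P}_i(\xi)\big)\widehat{\mathcal{N}}(\xi,\tau)$ and read off the four components from the matrix in \eqref{so-max1}. Since $\widehat{\mathcal{N}}$ has vanishing first and third entries, the potentially singular factors $|\xi|^{-1}g^{1,2}_{\pm}$, which sit only in the first and third columns of \eqref{so-max1}, never act; hence each component of $\mathfrak{F}[\mathcal{S}]$ is a combination, with $\xi$--uniformly bounded coefficients $g^{1,2}_{\pm}$, $g^{3,4}_{\pm}$ and remainders $O(|\xi|)(e^{-\kappa_1|\xi|^2(t-\tau)+O(|\xi|^4)(t-\tau)}+e^{-\kappa_2|\xi|^2(t-\tau)+O(|\xi|^4)(t-\tau)})$, of $\mathfrak{F}[\Lambda^{-1}{\rm div}N^m_1]$ and $\mathfrak{F}[\Lambda^{-1}{\rm div}N^m_2]$. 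Applying Plancherel, then Hausdorff--Young ($\|\widehat f\|_{L^\infty(|\xi|\le\eta)}\lesssim\|f^L\|_{L^1}$, or simply $\|\widehat f\|_{L^2}=\|f\|_{L^2}$ when $\nabla^k$ is kept on the source), and the elementary bound $\int_{|\xi|\le\eta}|\xi|^{2a}e^{-c|\xi|^2(t-\tau)}\,d\xi\lesssim(1+t-\tau)^{-3/2-a}$, already gives the ``baseline'' rates, namely \eqref{decay-S-n} and \eqref{decay-S-n2} for the $n$--components of $\mathcal{S}$.

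The gain to the rate $(1+t-\tau)^{-5/4-k/2}$ for the density components rests on one structural observation. For $\widehat{S^\varrho_1}$ and $\widehat{S^\varrho_2}$ the combination surviving the product with $\widehat{\mathcal{N}}$ is precisely $\mathfrak{F}[\Lambda^{-1}{\rm div}(N^m_1+N^m_2)]$, and in this sum the electric--field terms recombine: since $\Delta\phi=Z\varrho_1-\varrho_2$, one has $(Z\varrho_1-\varrho_2)\nabla\phi=(\Delta\phi)\nabla\phi={\rm div}\big(\nabla\phi\otimes\nabla\phi-\tfrac12|\nabla\phi|^2\mathbb{I}\big)$, while the remaining pieces of $N^m_i$ are already of the form $-{\rm div}\mathbb{F}_i$. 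Hence
\[
\Lambda^{-1}{\rm div}(N^m_1+N^m_2)=\Lambda^{-1}{\rm div}\,{\rm div}\big(\nabla\phi\otimes\nabla\phi-\tfrac12|\nabla\phi|^2\mathbb{I}-\mathbb{F}_1-\mathbb{F}_2\big),
\]
whose Fourier symbol carries an extra factor $|\xi|$ compared with a generic component of $\widehat{\mathcal{N}}$. Feeding this into the same estimate --- bounding the quadratic term via $\|\mathfrak{F}[\nabla\phi\otimes\nabla\phi]\|_{L^\infty(|\xi|\le\eta)}\lesssim\|\nabla\phi\|_{L^2}^2$ by Cauchy--Schwarz in the convolution, and the flux and remainder terms by $\|(\mathbb{F}_1^L,\mathbb{F}_2^L,\mathcal{N}^L)\|_{L^1}$ --- the extra $|\xi|$ raises the power of $(1+t-\tau)$ by $1/2$, yielding \eqref{decay-S-varrho}; the variant \eqref{decay-S-varrho2} follows by keeping $\nabla^k$ on $\mathcal{N}^L$ in $L^2$ instead of extracting an $L^1$ bound.

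It remains to treat the term $\nabla\Delta^{-1}\big(Z(S^\varrho_1)^L-(S^\varrho_2)^L\big)$ in \eqref{decay-S-n}--\eqref{decay-S-n2}, which at first sight carries a dangerous $|\xi|^{-1}$. Here I would use the cancellation built into \eqref{so-max1}: the order--one parts of the first and third rows acting on $\widehat{\mathcal{N}}$ produce $\tfrac{ig^{3,4}_-}{2(1+Z)\sigma_2}\mathfrak{F}[\Lambda^{-1}{\rm div}(N^m_1+N^m_2)]$ and $\tfrac{iZg^{3,4}_-}{2(1+Z)\sigma_2}\mathfrak{F}[\Lambda^{-1}{\rm div}(N^m_1+N^m_2)]$ respectively, so $Z\widehat{S^\varrho_1}-\widehat{S^\varrho_2}$ lies entirely in the $O(|\xi|)\mathbb{J}\widehat{\mathcal{N}}$ remainder; the $|\xi|^{-1}$ coming from $\nabla\Delta^{-1}$ is then absorbed by this $O(|\xi|)$ and no singularity at $\xi=0$ survives, after which the same routine estimate closes the bound. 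The only point I expect to require genuine care is the bookkeeping of the several $O(|\xi|)+iO(|\xi|)$ error matrices in \eqref{so-max1} and checking that the quadratic electric--field contribution can indeed be controlled uniformly on $|\xi|\le\eta$ by an $L^2$--norm of $\nabla\phi$ (rather than by an $L^1$--norm of the product); everything else reduces to the Plancherel--Hausdorff--Young--Gaussian computation already performed in the proof of Proposition \ref{li-de-es1}.
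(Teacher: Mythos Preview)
Your proposal is correct and follows essentially the same route as the paper: both arguments read off $\mathfrak{F}[\mathcal{S}]$ from the explicit matrix \eqref{so-max1}, exploit the vanishing of the first and third entries of $\widehat{\mathcal{N}}$, and then use the identity $(Z\varrho_1-\varrho_2)\nabla\phi=\Delta\phi\,\nabla\phi={\rm div}\big(\nabla\phi\otimes\nabla\phi-\tfrac12|\nabla\phi|^2\mathbb{I}\big)$ to recast $\Lambda^{-1}{\rm div}(N^m_1+N^m_2)$ as $\Lambda^{-1}{\rm div}\,{\rm div}$ of a tensor, gaining the extra $|\xi|$ that produces the $-\tfrac54-\tfrac k2$ rate; the remaining bounds are the same Plancherel--Hausdorff--Young--Gaussian computation as in Proposition~\ref{li-de-es1}. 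If anything, your treatment of $\nabla\Delta^{-1}\big(ZS^\varrho_1-S^\varrho_2\big)$ --- observing that the leading first-- and third--row contributions are proportional so that only the $O(|\xi|)\mathbb{J}\widehat{\mathcal{N}}$ remainder survives and absorbs the $|\xi|^{-1}$ --- is more explicit than the paper's ``similar way'' dismissal.
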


\noindent\textbf{2.5 Lower Decay Rate for the Linear System}

The lower--bounds on the decay rates for the above linear system are given in the following proposition:
\begin{Proposition}\label{li-low-de-es1}
 Assume that $(\varrho_{10}, n_{10}, \varrho_{20}, n_{20}, \nabla\phi_0) \in L^1$ satisfies
 \begin{equation}\nonumber
  \widehat{\varrho_{10}}(\xi) = \widehat{n_{10}}(\xi) = \widehat{ M_{10}}(\xi) = \widehat{\varrho_{20}}(\xi) =\widehat{ M_{10}}(\xi) = 0, \quad and \quad |\widehat{n_{20}}| \geq C\delta_0^\frac32
 \end{equation}
 for any $|\xi|\le \eta$. Then there exists a positive constant $C_2$, which is independent of $t$, such that the global solution $(\varrho_{1}, n_{1}, \varrho_{2}, n_{2}) $ of the IVP \eqref{5eq} satisfies
 \begin{equation}\label{low-li-es1}
  \min\left\{\|\varrho_{1}^L\|_{L^2}, \|n_{1}^L\|_{L^2}, \|\varrho_{2}^L\|_{L^2}, \|n_{2}^L\|_{L^2}\right\}\geq C_2\delta_0^\frac32(1+t)^{-\frac34},
 \end{equation}
 and
 \begin{equation}\label{one-low-li-es1}
  \min\left\{\|\nabla\varrho_{1}^L\|_{L^2}, \|\nabla n_{1}^L\|_{L^2}, \|\nabla\varrho_{2}^L\|_{L^2}, \|\nabla n_{2}^L\|_{L^2}\right\}
  \geq C_2\delta_0^\frac32(1+t)^{-\frac54}
 \end{equation}
 for any large enough $t$.
\end{Proposition}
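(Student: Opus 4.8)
The plan is to prove Proposition \ref{li-low-de-es1} directly from the explicit semigroup formula \eqref{so-expr1}--\eqref{so-max1}. The hypotheses force $\widehat{\varrho_{10}}=\widehat{n_{10}}=\widehat{\varrho_{20}}=0$ on $\{|\xi|\le\eta\}$, so there $\widehat{U}_0(\xi)=(0,0,0,\widehat{n_{20}}(\xi))^T$ and by \eqref{so-expr1} the low--frequency solution is simply the fourth column of $\sum_i e^{\lambda_i t}\mathbb{P}_i(\xi)$ multiplied by $\widehat{n_{20}}$, plus the $(O(|\xi|)+iO(|\xi|))\,\mathbb{J}$ remainder of \eqref{so-max1}. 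Since $\mathbb{A}(\xi)$ is real we have $\lambda_2=\overline{\lambda_1}$, $\lambda_4=\overline{\lambda_3}$, so writing $g^{i,j}_\pm=e^{t\lambda_i}\pm e^{t\lambda_j}$ we get $g^{3,4}_+=2e^{t{\rm Re}\,\lambda_3}\cos(t\,{\rm Im}\,\lambda_3)$, $g^{3,4}_-=2i\,e^{t{\rm Re}\,\lambda_3}\sin(t\,{\rm Im}\,\lambda_3)$, $|g^{1,2}_\pm|\le 2e^{t{\rm Re}\,\lambda_1}$, while by Lemma \ref{eigenvalues} ${\rm Re}\,\lambda_3=-\kappa_2|\xi|^2+O(|\xi|^4)$, ${\rm Im}\,\lambda_3=\sigma_2|\xi|+O(|\xi|^3)$, ${\rm Re}\,\lambda_1=-\kappa_1|\xi|^2+O(|\xi|^4)$. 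Thus, for $|\xi|\le\eta$, each of $\widehat{\varrho_1},\widehat{\varrho_2},\widehat{n_1},\widehat{n_2}$ has the form $a_1+a_2+R$, where $a_1$ is the ``acoustic'' term (a constant multiple of $g^{3,4}_\pm\widehat{n_{20}}$, oscillating at the frequency $\sigma_2|\xi|\to0$), $a_2$ is the ``plasma'' term (a constant multiple of $g^{1,2}_+\widehat{n_{20}}$, which is \emph{absent} for $\varrho_1$ and $\varrho_2$, and oscillates in $t$ at the fixed frequency $\sqrt{1+Z}$), and $R$ is the remainder with $|R|\lesssim|\xi|(e^{-\kappa_1|\xi|^2t}+e^{-\kappa_2|\xi|^2t})|\widehat{n_{20}}|$. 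The structural point is that $a_1$ yields the lower bound while $a_2$ and $R$ are of strictly lower order.

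Concretely, I would expand $\|f^L\|_{L^2}^2=\int_{|\xi|\le\eta}|a_1+a_2+R|^2\,d\xi$ and discard the nonnegative terms $\int|a_2|^2,\int|R|^2$:
\begin{equation}\nonumber
\|f^L\|_{L^2}^2\;\ge\;\int_{|\xi|\le\eta}|a_1|^2\,d\xi-2\Bigl|\int_{|\xi|\le\eta}\overline{a_1}\,a_2\,d\xi\Bigr|-2\bigl(\|a_1\|_{L^2}+\|a_2\|_{L^2}\bigr)\|R\|_{L^2}.
\end{equation}
For the first term, $|a_1|^2=c\,e^{2t{\rm Re}\,\lambda_3}\sin^2(t\,{\rm Im}\,\lambda_3)\,|\widehat{n_{20}}|^2$ (with $\cos^2$ in place of $\sin^2$ for $n_1,n_2$); using $|\widehat{n_{20}}|\ge C\delta_0^{3/2}$ and, after shrinking $\eta$, $e^{2t{\rm Re}\,\lambda_3}\ge e^{-3\kappa_2|\xi|^2t}$, then substituting $\xi=\zeta/\sqrt t$ and writing $\sin^2\theta,\cos^2\theta=\frac12(1\mp\cos2\theta)$, the Riemann--Lebesgue lemma gives $\int_{|\xi|\le\eta}|a_1|^2\,d\xi\gtrsim\delta_0^3(1+t)^{-3/2}$ for all large $t$ (the oscillatory squares averaging to $\frac12$). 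The cross term equals, up to a constant, $\int_{|\xi|\le\eta}g^{3,4}_+g^{1,2}_+|\widehat{n_{20}}|^2\,d\xi$; the product--to--sum identity recasts it as oscillatory integrals with phases $(\,{\rm Im}\,\lambda_1\pm{\rm Im}\,\lambda_3)t$, whose $|\xi|$--derivative is $\pm\sigma_2t+O(|\xi|t)$, hence $\gtrsim t$ on the effective support $|\xi|\lesssim t^{-1/2}$, so a non--stationary--phase (Riemann--Lebesgue) estimate in $|\xi|$ makes it $o((1+t)^{-3/2})$. Finally $|\widehat{n_{20}}(\xi)|\le|\widehat{m_{20}}(\xi)|\le\|m_{20}\|_{L^1}\lesssim\delta_0$ yields $\|a_1\|_{L^2},\|a_2\|_{L^2}\lesssim\delta_0(1+t)^{-3/4}$ and $\|R\|_{L^2}\lesssim\delta_0(1+t)^{-5/4}$, so the last term is $\lesssim\delta_0^2(1+t)^{-2}$. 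Hence all the subtracted terms are $o(\delta_0^3(1+t)^{-3/2})$ once $t$ exceeds a threshold depending on $\delta_0$, giving $\|f^L\|_{L^2}\gtrsim\delta_0^{3/2}(1+t)^{-3/4}$ for each of $\varrho_1,n_1,\varrho_2,n_2$; taking the minimum over the four components gives \eqref{low-li-es1}. Inserting the weight $|\xi|^2$ into every $L^2$--integral above (which replaces each rate $(1+t)^{-3/2}$ by $(1+t)^{-5/2}$ and $(1+t)^{-3/4}$ by $(1+t)^{-5/4}$) gives \eqref{one-low-li-es1}.

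The hard part, I expect, is making the two Riemann--Lebesgue steps -- the averaging of $\sin^2,\cos^2(t\,{\rm Im}\,\lambda_3)$ to $\frac12$ and the decay of the cross term -- rigorous and uniform in $t$. Since the data lie only in $L^1$, $\widehat{n_{20}}$ (and its spherical average $r\mapsto\int_{S^2}|\widehat{n_{20}}(r\omega)|^2\,d\omega$) is merely continuous and bounded, which rules out integrating by parts in $\xi$. The remedy is to split $\{|\xi|\le\eta\}$, in the rescaled variable $\zeta=\sqrt t\,\xi$, into a fixed ball $\{|\zeta|\le\rho\}$ -- on which the amplitude is uniformly bounded and equicontinuous, so that oscillation at the frequency $\sigma_2\sqrt t\to\infty$ drives the oscillatory integral to $0$ as $t\to\infty$ (a quantitative Riemann--Lebesgue statement proved by approximating the amplitude by step functions, uniformly in $t$) -- and a tail $\{|\zeta|>\rho\}$ absorbed by the Gaussian once $\rho$ is large. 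The remaining bookkeeping, which is routine, is to check that the corrections $O(|\xi|^3t)$, $O(|\xi|^4t)$ in the expansions of $\lambda_{3,4}$ and the $O(|\xi|)\,\mathbb{J}$ error matrix in \eqref{so-max1} are all $o(1)$ on the support $|\xi|\lesssim t^{-1/2}$, so they do not affect these limits.
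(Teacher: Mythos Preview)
Your proposal is correct and follows essentially the same line as the paper's proof: both read off the fourth column of \eqref{so-max1}, expand the $L^2$--norm squared, show that the squared main term is bounded below by $c\delta_0^3(1+t)^{-3/2}$ via the identity $\cos^2\theta=\tfrac12(1+\cos2\theta)$ (respectively $\sin^2\theta$) after the rescaling $\xi=\zeta/\sqrt t$, and show that the cross term and the $O(|\xi|)\,\mathbb J$--remainder are of strictly lower order.

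The differences are organizational rather than conceptual. The paper, working on $n_2$, keeps \emph{both} squared pieces $I_1$ (from $g^{1,2}_+$) and $I_2$ (from $g^{3,4}_+$) as positive contributors and handles the oscillatory cross term $I_3$ and the oscillatory half of $I_2$ by writing $\cos(\sigma_2 r\sqrt t)\,dr=\tfrac{1}{\sigma_2\sqrt t}\,d\sin(\sigma_2 r\sqrt t)$ and integrating by parts in $r$, which yields the quantitative gain $O(t^{-2})$. You instead extract the lower bound from the acoustic piece $a_1$ alone and treat the cross term via a Riemann--Lebesgue argument, giving only $o((1+t)^{-3/2})$ with a $\delta_0$--dependent threshold; this is slightly weaker but still sufficient for the stated proposition. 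Your care about the lack of differentiability of $\widehat{n_{20}}$ is warranted---the paper's integration by parts tacitly treats the spherical average of $|\widehat{n_{20}}|^2$ as smooth---and your proposed fix (freeze $\widehat{n_{20}}$ at $\xi=0$ on $\{|\zeta|\le\rho\}$ and absorb the tail by the Gaussian) is the standard way to make that step rigorous.
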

\begin{proof}
 Here we only prove the lower decay rate on $n_2$. From \eqref{so-expr1} and \eqref{so-max1}, we have that for $|\xi|\le \eta$,
 \begin{equation}\nonumber
  \begin{split}
  &\widehat{n}_2(t)\\
  &= \frac{g_+^{1,2} + Zg_+^{3,4}}{2(1+Z)}\widehat{n_{20}} +\left(O(|\xi|) +iO(|\xi|)\right)\left(e^{- \kappa_1 |\xi|^2t +O(|\xi|^4t)} +e^{- \kappa_2 |\xi|^2t +O(|\xi|^4t)}\right)\widehat{n_{20}}
  \\
  & = \left(\frac1{1+Z}e^{- \kappa_1 |\xi|^2t}\cos\left(\sqrt{1+Z}t +\frac{\sigma_1^2}{2\sqrt{1+Z}}|\xi|^2t\right) +\frac Z{1+Z}e^{- \kappa_2 |\xi|^2t}\cos \left(\sigma_2|\xi|t\right)\right)\widehat{n_{20}}
  \\
  &\qquad +\left(O(|\xi|^4t) +O(|\xi|^3t) +O(|\xi|)+i\left(O(|\xi|^4t) +O(|\xi|^3t) +O(|\xi|)\right)\right)\left(e^{- \kappa_1 |\xi|^2t} +e^{- \kappa_2 |\xi|^2t}\right)\widehat{n_{20}}.
  \end{split}
 \end{equation}
 Then we have
 \begin{equation}\label{low-es-211}
  \begin{split}
  \|n_2^L\|_{L^2}^2 &= \|\widehat{n_2^L}\|_{L^2}^2
  \\
  &\geq \int_{|\xi|\le \eta}\left(\frac{e^{- \kappa_1 |\xi|^2t}}{1+Z}\cos\left(\sqrt{1+Z}t +\frac{\sigma_1^2}{2\sqrt{1+Z}}|\xi|^2t\right) +\frac {Ze^{- \kappa_2 |\xi|^2t}}{1+Z}\cos \left(\sigma_2|\xi|t\right)\right)^2|\widehat{n_{20}}|^2d\xi
  \\
  &\qquad +\int_{|\xi|\le \eta}\left(O(|\xi|^4t) +O(|\xi|^3t) +O(|\xi|)\right)^2\left(e^{- \kappa_1 |\xi|^2t} +e^{- \kappa_2 |\xi|^2t}\right)^2|\widehat{n_{20}}|^2d\xi
  \\
  & \geq \int_{|\xi|\le \eta}\frac{e^{- 2\kappa_1|\xi|^2t}}{(1+Z)^2}\cos^2\left(\sqrt{1+Z}t +\frac{\sigma_1^2}{2\sqrt{1+Z}}|\xi|^2t\right)|\widehat{n_{20}}|^2d\xi
  \\
  &\qquad +\int_{|\xi|\le \eta}\frac {Z^2e^{- 2\kappa_2 |\xi|^2t}}{(1+Z)^2}\cos^2 \left(\sigma_2|\xi|t\right)
  |\widehat{n_{20}}|^2d\xi
  \\
  &\qquad +\int_{|\xi|\le \eta}\frac{2Ze^{- (\kappa_1 +\kappa_2) |\xi|^2t}}{(1+Z)^2}\cos\left(\sqrt{1+Z}t +\frac{\sigma_1^2}{2\sqrt{1+Z}}|\xi|^2t\right)\cos \left(\sigma_2|\xi|t\right)|\widehat{n_{20}}|^2d\xi
  \\
  &\qquad-C(1+t)^{-\frac52}\|\widehat{n_{20}}\|_{L^\infty}^2
  \\
  &:= I_1 +I_2 +I_3 - C(1+t )^{-\frac52}\|n_{20}\|_{L^1}^2.
  \end{split}
 \end{equation}

 In spirit of \cite{LMZ} and \cite{WZZ2}, we can estimate the first two terms in the right hand side of \eqref{low-es-211} as
 \begin{equation}\label{I1}
  \begin{split}
  I_1 &= \int_{|\xi|\le \eta}\frac{e^{- 2\kappa_1|\xi|^2t}}{(1+Z)^2}\cos^2\left(\sqrt{1+Z}t +\frac{\sigma_1^2}{2\sqrt{1+Z}}|\xi|^2t\right)
  |\widehat{n_{20}}|^2d\xi
  \\
  &\geq C\delta_0^3t^{-\frac32}\int_0^{\eta\sqrt{t_0}} e^{- 2\kappa_1 r^2}\cos^2\left(\sqrt{1+Z}t +\frac{\sigma_1^2}{2\sqrt{1+Z}}r^2\right)dr
  \\
  &\geq C\delta_0^3t^{-\frac32}
  \end{split}
 \end{equation}
 and
 \begin{equation}\label{I2}
  \begin{split}
  I_2 &= \int_{|\xi|\le \eta}\frac {Z^2e^{- 2\kappa_2 |\xi|^2t}}{(1+Z)^2}\cos^2 \left(\sigma_2|\xi|t\right)^2
  |\widehat{n_{20}}|^2d\xi
  \\
  &= O(1)t^{-\frac32}\int_0^{\eta\sqrt{t}}e^{- 2\kappa_2 r^2}\left(\cos\left(2\sigma_2|\xi|t\right) +1\right)|\widehat{n_{20}}|^2dr
  \\
  &\geq C\delta_0^3t^{-\frac32}\int_0^{\eta\sqrt{t}}e^{- 2\kappa_2 r^2}dr
  -C\|\widehat{n_{20}}\|_{L^\infty}^2t^{-\frac32}\int_0^{\eta\sqrt{t}}e^{- 2\kappa_2 r^2}\cos\left(2\sigma_2|\xi|t\right)dr
  \\
  &\geq C\delta_0^3t^{-\frac32}- C\|n_{20}\|_{L^1}^2t^{-2}
  \end{split}
 \end{equation}
 for any $t\geq t_0$ with some sufficiently large time $t_0$ dependent of $\|v_{20}\|_{L^1}$.

 Moreover, we have that for $t\geq t_0$,
 \begin{equation}\label{I3}
  \begin{split}
  I_3&= \int_{|\xi|\le \eta}\frac{2Ze^{- (\kappa_1 +\kappa_2) |\xi|^2t}}{(1+Z)^2}\cos\left(\sqrt{1+Z}t +\frac{\sigma_1^2}{2\sqrt{1+Z}}|\xi|^2t\right)\cos \left(\sigma_2|\xi|t\right)|\widehat{n_{20}}|^2d\xi
  \\
  &= O(1)t^{-\frac32}\int_0^{\eta\sqrt{t}}e^{- (\kappa_1 +\kappa_2) r^2}\cos\left(\sqrt{1+Z}t +\frac{\sigma_1^2}{2\sqrt{1+Z}} r^2\right)
  \cos\left(\sigma_2 r\sqrt{t}\right)
  |\widehat{n_{20}}|^2dr
  \\
  &=O(1)t^{-\frac32}\int_0^{\eta\sqrt{t}}e^{- (\kappa_1 +\kappa_2) r^2}\cos\left(\sqrt{1+Z}t +\frac{\sigma_1^2}{2\sqrt{1+Z}} r^2\right)
  |\widehat{n_{20}}|^2\frac1{\sigma_2
  \sqrt{t}}d\sin\left(\sigma_2 r\sqrt{t}\right)
  \\
  &\geq -Ct^{-2}\|n_{20}\|_{L^1}^2,
  \end{split}
 \end{equation}
 thus plugging \eqref{I1}--\eqref{I3} into \eqref{low-es-211}, we can obtain that
 \begin{equation}\nonumber
  \| n_{2}^L\|_{L^2}
  \geq C\delta_0^\frac32(1+t)^{-\frac34} - C\|n_{20}\|_{L^1}(1+t)^{-1} - C\|n_{20}\|_{L^1}(1+t)^{-\frac54},
 \end{equation}
 which implies that \eqref{low-li-es1} holds for some appropriately positive constant $C_2$ and any large enough time $t$.
\end{proof}

\section{Proof of Upper Decay Estimates}

This section is devoted to prove the optimal decay rates of the solution stated in Theorem \ref{1mainth}. First by the general energy estimate method, we can derive the following energy inequality, whose proof can also be seen in \cite{WZZ}:
\begin{Proposition}\label{exist-u}
 Under the assumption \eqref{id-delta0} of Theorem \ref{1mainth}, the Cauchy problem \eqref{2eq} admits a unique globally classical solution $(\varrho_1, u_1, \varrho_2, u_2)$ such that for any $t \in [0,\infty)$,
 \begin{equation}\nonumber
  \begin{split}
  &\|(\varrho_1, u_1, \varrho_2, u_2, \nabla\phi)(t)\|_{H^l}^2 +\int_0^t\left(\|\nabla(\varrho_1, \varrho_2, \nabla\phi)(\tau)\|_{H^{l-1}}^2 +\|\nabla( u_1, u_2)(\tau)\|_{H^l}^2\right)d\tau
  \\
  &\le C\|(\varrho_{10}, u_{10}, \varrho_{20}, u_{20})\|_{H^l}^2
  \le CC_0.
  \end{split}
 \end{equation}
\end{Proposition}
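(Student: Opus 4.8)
The plan is to prove the energy inequality by the standard local-existence-plus-continuity scheme, so that the real content is a \emph{closed} a priori estimate. Introduce the energy and dissipation functionals
\[
\mathcal E(t)=\|(\varrho_1,u_1,\varrho_2,u_2,\nabla\phi)(t)\|_{H^l}^2,\qquad
\mathcal D(t)=\|\nabla(\varrho_1,\varrho_2,\nabla\phi)(t)\|_{H^{l-1}}^2+\|\nabla(u_1,u_2)(t)\|_{H^l}^2 .
\]
Since $\nabla\phi=\nabla\Delta^{-1}(Z\varrho_1-\varrho_2)$ one has $\|\nabla^k\nabla\phi\|_{L^2}=\|\nabla^{k-1}(Z\varrho_1-\varrho_2)\|_{L^2}$ for $k\ge1$, so the only part of $\mathcal E$ not already controlled by $(\varrho_i,u_i)$ is $\|\nabla\phi\|_{L^2}$, while $\|\nabla\nabla\phi\|_{H^{l-1}}^2=\|Z\varrho_1-\varrho_2\|_{H^{l-1}}^2$; these relations will let the Poisson variable be handled entirely through the density estimates plus a single scalar Poisson-energy term. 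Local existence and uniqueness of a classical solution of \eqref{2eq} with $\rho_i=\varrho_i+\mathrm{const}>0$ is obtained by a routine iteration scheme (and, as long as $\|\varrho_i\|_{L^\infty}$ stays small, is equivalent to the solution of the $(\varrho,m)$--system already constructed in \cite{WZZ}); it therefore suffices to show that under the a priori hypothesis $\sup_{[0,T]}\mathcal E(t)\le\delta^2$ for a suitably small $\delta$ one in fact has $\mathcal E(t)+c\!\int_0^t\!\mathcal D\,d\tau\le C\,\mathcal E(0)$ with $c,C$ independent of $T$; choosing $\delta_0$ in \eqref{id-delta0} small then makes $\mathcal E(0)\le CC_0^2$ reopen the hypothesis and the solution extends globally.

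\emph{Step 1 (weighted energy estimates).} For $0\le k\le l$ apply $\nabla^k$ to \eqref{2eq} and pair the four equations in $L^2$ with a weighted combination of $(\nabla^k\varrho_1,\nabla^k u_1,\nabla^k\varrho_2,\nabla^k u_2)$, the weights (proportional to $ZP_1'(\tfrac1Z):1:ZP_2'(1):Z$) being dictated both by the symmetrization of the hyperbolic part and by the requirement that the two nonlocal terms $\mp\nabla\Delta^{-1}(Z\varrho_1-\varrho_2)$ from the momentum equations, using $Z\varrho_1-\varrho_2=\Delta\phi$ and the relation $\partial_t\Delta\phi=\mathrm{div}(u_2-u_1)+\mathcal N^\varrho$ read off from the two continuity equations, recombine into $\tfrac12\tfrac{d}{dt}\|\nabla^k\nabla\phi\|_{L^2}^2$. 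The viscous terms then produce a coercive $c\|\nabla^{k+1}(u_1,u_2)\|_{L^2}^2$. All remaining terms are contributions of the nonlinearities \eqref{N1N2N3N4}; since each is quadratic, carries at most two derivatives, and has a coefficient that is a smooth function of $\varrho_i$ vanishing with $\varrho_i$, Moser product and commutator estimates together with $H^2\hookrightarrow L^\infty$ (which uses $l\ge3$) bound them by $C\sqrt{\mathcal E(t)}\,\mathcal D(t)$. Summing in $k$ yields a functional $\mathcal E_{\mathrm e}(t)\sim\mathcal E(t)$ with $\tfrac{d}{dt}\mathcal E_{\mathrm e}(t)+c\,\|\nabla(u_1,u_2)(t)\|_{H^l}^2\le C\sqrt{\mathcal E(t)}\,\mathcal D(t)$.

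\emph{Step 2 (dissipation of the densities and the potential).} To recover $\|\nabla(\varrho_1,\varrho_2,\nabla\phi)\|_{H^{l-1}}$ one forms, for $0\le k\le l-1$ and $i=1,2$, the quantity $\tfrac{d}{dt}\langle\nabla^k u_i,\nabla^{k+1}\varrho_i\rangle$ from the $i$--th momentum equation. The pressure term gives $P_i'\|\nabla^{k+1}\varrho_i\|_{L^2}^2$; the time derivative falling on $\varrho_i$ is converted, through the continuity equation, into a term bounded by $\|\nabla^{k+1}u_i\|_{L^2}^2$ plus nonlinear; the viscous terms are estimated by $\epsilon\|\nabla^{k+2}u_i\|_{L^2}^2+C_\epsilon\|\nabla^{k+1}\varrho_i\|_{L^2}^2$ (note $k+2\le l+1$, so $\|\nabla^{k+2}u_i\|_{L^2}\le\|\nabla(u_1,u_2)\|_{H^l}$) and absorbed; and the Poisson term $\pm\int\nabla^k\nabla\Delta^{-1}(Z\varrho_1-\varrho_2)\cdot\nabla^{k+1}\varrho_i$, summed over $i=1,2$ with equal weights, collapses (again via $Z\varrho_1-\varrho_2=\Delta\phi$) to $-\|\nabla^k(Z\varrho_1-\varrho_2)\|_{L^2}^2$, hence re-emerges on the favorable side as $+\|\nabla^{k+1}\nabla\phi\|_{L^2}^2$. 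Summing over $i$ and $0\le k\le l-1$ and giving this combination a small coefficient produces an auxiliary functional $\mathcal E_{\mathrm c}(t)$ with $|\mathcal E_{\mathrm c}(t)|\lesssim\|(\varrho_i,u_i)(t)\|_{H^l}^2$ and
\[
\tfrac{d}{dt}\mathcal E_{\mathrm c}(t)+c\big(\|\nabla(\varrho_1,\varrho_2)(t)\|_{H^{l-1}}^2+\|\nabla\nabla\phi(t)\|_{H^{l-1}}^2\big)\le \widetilde C\,\|\nabla(u_1,u_2)(t)\|_{H^l}^2+C\sqrt{\mathcal E(t)}\,\mathcal D(t),
\]
the $k=0$ instance being precisely the Poisson-induced zero-order density dissipation absent from the pure Navier--Stokes system.

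\emph{Step 3 (closing, and the main obstacle).} Set $\mathcal L(t)=A\,\mathcal E_{\mathrm e}(t)+\mathcal E_{\mathrm c}(t)$ with $A$ so large that $\mathcal L\sim\mathcal E$ and $Ac\ge2\widetilde C$; adding $A$ times the Step~1 inequality to the Step~2 inequality gives $\tfrac{d}{dt}\mathcal L(t)+\tfrac c2\,\mathcal D(t)\le C\sqrt{\mathcal E(t)}\,\mathcal D(t)\le C\delta\,\mathcal D(t)$, so for $\delta$ small $\tfrac{d}{dt}\mathcal L(t)+\tfrac c4\,\mathcal D(t)\le0$ and, upon integrating, $\mathcal E(t)+c'\!\int_0^t\!\mathcal D\,d\tau\le C\mathcal E(0)\le CC_0^2$; picking $\delta_0$ with $CC_0^2\le CC_0\delta_0<\tfrac12\delta^2$ closes the continuity argument and yields both the global solution and the stated inequality, while uniqueness follows from a routine $L^2$ energy estimate for the difference of two solutions. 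I expect the genuine difficulty to be Step~2 under the two structural obstructions stressed in the introduction: because $\mu_1\ne\mu_2,\ \nu_1\ne\nu_2$ the system cannot be split into a Navier--Stokes block plus a unipolar-NSP block, so one must work with the fully coupled system and choose the weights on the two momentum equations \emph{exactly} so that the nonlocal terms $\mp\nabla\Delta^{-1}(Z\varrho_1-\varrho_2)$ assemble into a single sign-definite contribution in $g=Z\varrho_1-\varrho_2$; it is this cancellation that simultaneously delivers the Navier--Stokes-type dissipation of $\nabla\varrho_i$ and the Navier--Stokes--Poisson-type dissipation at the level of $\|g\|_{L^2}=\|\nabla\nabla\phi\|_{L^2}$. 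Once this algebra is arranged, the nonlinear estimates are entirely routine for $l\ge3$.
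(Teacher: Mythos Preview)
The paper does not prove this proposition: it states that the energy inequality follows ``by the general energy estimate method'' and refers the reader to \cite{WZZ} for the proof. Your sketch is correct and is precisely the standard continuity-argument scheme one expects \cite{WZZ} to carry out: a weighted $H^l$ energy identity giving velocity dissipation (your Step~1), plus the cross-pairing $\langle\nabla^k u_i,\nabla^{k+1}\varrho_i\rangle$ to recover density and potential dissipation (your Step~2), combined with a small multiple and closed under the a~priori smallness $\mathcal E\le\delta^2$ (your Step~3). In fact the paper itself reproduces exactly your Step~2 computation in the proof of Lemma~\ref{le-es-highorder1} (the high-frequency estimate): the pairing $\langle\nabla^l\varrho_i^H,\nabla^{l-1}(\text{momentum eqn})_i\rangle$ with equal weights makes the two nonlocal terms collapse to $+\|\nabla^l(\nabla\phi)^H\|_{L^2}^2$, which is the sign-definite Poisson contribution you describe.

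One small remark on your Step~1 weights: the choice $ZP_1'(\tfrac1Z):1:ZP_2'(1):Z$ is the right one for the clean identity $\tfrac12\tfrac{d}{dt}\|\nabla^k\nabla\phi\|^2$, since from the two continuity equations one has $\partial_t(Z\varrho_1-\varrho_2)=-\mathrm{div}(u_1-u_2)+\text{nonlinear}$, and only the ratio $1:Z$ on the momentum equations turns the combined Poisson contribution $-Z\langle\nabla^k\nabla\phi,\nabla^k u_1\rangle+Z\langle\nabla^k\nabla\phi,\nabla^k u_2\rangle$ into $-Z\langle\nabla^k\nabla\phi,\nabla^k(u_1-u_2)\rangle$, matching the time derivative. (The paper's displayed weights in Lemma~\ref{le-es-highorder1} appear to use $1:1$ on the momentum equations, which for general $Z$ would leave a residual cross term; your choice is the correct one.) Your identification of this weight alignment as the only nontrivial algebraic step, with everything else routine for $l\ge3$, is accurate.
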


Define the time--weighted energy functional
\begin{equation}\label{M1}
 \mathcal{M}(t) = \sup_{0\le \tau\le t}\sum_{k=0}^l\left((1+\tau)^{-\frac34-\frac k2}\|\nabla^k(\varrho_1, \varrho_2)\|_{L^2} +(1+\tau)^{\frac32(\frac1p-\frac12) + \frac k2} \|\nabla^k (m_1, m_2, \nabla\phi)(\tau)\|_{L^2}\right).
\end{equation}

We shall prove the following proposition to achieve the second part of Theorem \ref{1mainth}.
\begin{Proposition}\label{es-thm-M}
 Under the assumptions \eqref{id-delta0} and \eqref{k0} of Theorem \ref{1mainth}, it holds
 \begin{equation}\nonumber
  \mathcal{M}(t) \le C(C_0 +K_0).
 \end{equation}
\end{Proposition}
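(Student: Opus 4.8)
The plan is to close a bootstrap argument on the time-weighted functional $\mathcal{M}(t)$ defined in \eqref{M1}. The strategy is to show that, along any solution furnished by Proposition \ref{exist-u}, one has the inequality
\begin{equation}\nonumber
 \mathcal{M}(t) \lesssim C_0 + K_0 + \mathcal{M}(t)^2,
\end{equation}
which, because $\mathcal{M}$ is continuous and $\mathcal{M}(0)\lesssim C_0+K_0$ is small, forces the time-independent bound $\mathcal{M}(t)\lesssim C_0+K_0$ by the standard continuity/smallness argument. So the entire task reduces to estimating each of the two families of norms appearing in \eqref{M1}: the $L^2$-norms of $\nabla^k(\varrho_1,\varrho_2)$ (weighted by $(1+\tau)^{3/4+k/2}$) and the $L^2$-norms of $\nabla^k(m_1,m_2,\nabla\phi)$ (weighted by $(1+\tau)^{3/2(1/p-1/2)+k/2}$), for $0\le k\le l$, each controlled by $C_0+K_0+\mathcal{M}(t)^2$.

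First I would split every derivative norm into low- and high-frequency parts $\|\nabla^k f\|_{L^2}\le \|\nabla^k f^L\|_{L^2}+\|\nabla^k f^H\|_{L^2}$. For the high-frequency part, since $|\xi|\gtrsim 1$ there, one has $\|\nabla^k f^H\|_{L^2}\lesssim \|\nabla^l f^H\|_{L^2}\lesssim \|\nabla^l f\|_{L^2}$ for $k\le l$, so it suffices to control the highest-order high-frequency energy; this is where one must work with the $(\varrho_i,u_i)$-formulation \eqref{2eq}--\eqref{N1N2N3N4} rather than \eqref{2eq-m} (as flagged in the introduction, the nonlinear terms $\nabla^{l+1}(\varrho_i m_i)$ are intractable but $\nabla^{l-1}(\varrho_i\Delta u_i)$ are manageable), yielding an energy-dissipation inequality from which, fed with the a priori decay of the lower-order norms encoded in $\mathcal{M}(t)$, one extracts $\|\nabla^l(\varrho_1,u_1,\varrho_2,u_2)^H(t)\|_{L^2}$ decaying at the required rate — this is Lemma \ref{le-es-highorder1}. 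For the low-frequency part I would use Duhamel's formula \eqref{so-ex}: writing $U=e^{tB}\ast U_0+\int_0^t \mathcal{S}(\cdot,\tau)\,d\tau$ and also invoking \eqref{4eq} for the incompressible parts $M_i$, apply the linear decay estimates of Proposition \ref{li-de-es1} to the free part and Proposition \ref{prop-decay4} to the Duhamel integrand. The free part contributes exactly $C(1+t)^{-3/4-k/2}\|U_0^L\|_{L^1}+C(1+t)^{-3/2(1/p-1/2)-k/2}\|(\nabla\phi_0)^L\|_{L^p}\lesssim (C_0+K_0)(1+t)^{-(\cdots)}$, matching the weights in \eqref{M1}.

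The main obstacle is the Duhamel (nonlinear) term, and within it the densities. For $\nabla^k(m_1,m_2,\nabla\phi)^L$ the bound \eqref{decay-S-n} gives $\|\nabla^k\mathcal{S}(\tau)\|_{L^2}\lesssim (1+t-\tau)^{-3/4-k/2}\|\mathcal{N}^L(\tau)\|_{L^1}$, and one estimates $\|\mathcal{N}^L(\tau)\|_{L^1}$ by the nonlinear structure \eqref{N1N2-m}: terms like $\tfrac{m_i\otimes m_i}{\rho_i}$, $\varrho_i\nabla\phi$, $\nabla(\varrho_i m_i/\rho_i)$ are quadratic, so by Hölder/Sobolev $\|\mathcal{N}^L(\tau)\|_{L^1}\lesssim \mathcal{M}(\tau)^2(1+\tau)^{-3/2(1/p-1/2)-3/4}$ or similar; the time convolution $\int_0^t(1+t-\tau)^{-3/4-k/2}(1+\tau)^{-(\cdots)}d\tau$ then closes. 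The genuinely delicate point — exactly as the introduction warns around \eqref{1.19}--\eqref{1.20} — is $\nabla^k(\varrho_1,\varrho_2)^L$: the naive use of \eqref{decay-S-n} loses a logarithm at $p=3/2$. Here I would instead use the sharper \eqref{decay-S-varrho}, which exploits that the density components of $\mathcal{S}$ carry an \emph{extra} $|\xi|$ (the nonlinear terms are in divergence form $\Lambda^{-1}\mathrm{div}\,\mathrm{div}(\cdots)$), giving the faster kernel $(1+t-\tau)^{-5/4-k/2}$ against $\|(\mathcal{N}^L,\mathbb{F}_1^L,\mathbb{F}_2^L)(\tau)\|_{L^1}+\|(\nabla\phi)^L(\tau)\|_{L^2}^2$; splitting $\int_0^t=\int_0^{t/2}+\int_{t/2}^t$ and bounding each by $\mathcal{M}(\tau)^2$ recovers the clean rate $(1+t)^{-3/4-k/2}$ without the logarithm. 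For the highest order $k=l$ one must additionally pass, via the low/high split and a time-splitting as in Lemma \ref{le-es-loworder1}, the $(l+1)$- and $(l+2)$-order terms that are not in the solution space onto lower-order quantities already controlled by $\mathcal{M}$. Collecting the density and momentum estimates over $0\le k\le l$ and taking the supremum in $\tau\le t$ yields $\mathcal{M}(t)\lesssim C_0+K_0+\mathcal{M}(t)^2$, and smallness of $\delta_0$ finishes the proof.
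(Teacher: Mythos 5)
Your proposal reproduces the paper's argument essentially step for step: the bootstrap on $\mathcal{M}(t)$ via $\mathcal{M}\lesssim C_0+K_0+\mathcal{M}^2$, the low/high frequency split, the use of the $(\varrho_i,u_i)$ formulation \eqref{2eq} to close the highest-order high-frequency energy (Lemma \ref{le-es-highorder1}), Duhamel plus Propositions \ref{li-de-es1}--\ref{prop-decay4} for the low-frequency piece, and above all the key observation that the density components of $\mathcal{S}$ carry an extra $|\xi|$ (divergence structure of the nonlinearity in $\widehat{S^\varrho_i}$), giving the faster kernel $(1+t-\tau)^{-5/4-k/2}$ via \eqref{decay-S-varrho} and thereby removing the borderline logarithm at $p=\tfrac32$.

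One detail in your sketch does not quite close as written. For $(\varrho_1^L,\varrho_2^L)$ you propose to apply the sharper estimate \eqref{decay-S-varrho} on \emph{both} halves of the time split. That works on $[0,t/2]$, where $(1+t-\tau)^{-5/4}\lesssim(1+t)^{-5/4}$ and the remaining $\int_0^{t/2}(1+\tau)^{-3(1/p-1/2)}\,d\tau$ stays sublinear. But on $[t/2,t]$ the source $\|(\nabla\phi)^L\|_{L^2}^2+\|(\mathcal N^L,\mathbb F_1^L,\mathbb F_2^L)\|_{L^1}$ decays only like $(1+\tau)^{-3(1/p-1/2)}$, which at $p=\tfrac32$ is $(1+\tau)^{-1/2}$; since $(1+t-\tau)^{-5/4}$ is integrable near $\tau=t$, that piece yields only $(1+t)^{-1/2}\mathcal{M}^2$, short of the target $(1+t)^{-3/4}$. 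The paper's resolution is to use \eqref{decay-S-varrho} only on $[0,t/2]$ and switch to \eqref{decay-S-n} (kernel $(1+t-\tau)^{-3/4}$ against $\|\mathcal{N}^L\|_{L^1}\sim(1+\tau)^{-3/2(1/p-1/2)-3/4}\mathcal{M}^2$) on $[t/2,t]$, where the stronger source decay compensates for the weaker kernel. Also, the paper does not estimate every intermediate $0<k<l$ directly: it treats only $k=0$ and $k=l$ (Lemmas \ref{es-fre-op}, \ref{le-es-loworder1}, \ref{le-es-highorder1}) and then uses the Sobolev interpolation Lemma \ref{1interpolation} to propagate the decay to all intermediate orders before taking the supremum in \eqref{M1}; you should make that interpolation step explicit rather than "collecting over $0\le k\le l$."
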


Next we divide the proof of Proposition \ref{es-thm-M} into the following steps: deriving the optimal decay rates on the low--frequent parts and the high--frequent parts of  solution and its highest order derivatives separately. To this end, we first need some tools to dealt with the integration on the time \cite{DUYZ1}.
\begin{lemma}\label{s1s2}
 Assume $s_1 >1$, $s_2\in [0, s_1]$, then we have
 \begin{equation}\nonumber
  \int_0^t(1+t-\tau)^{-s_1}(1+\tau)^{-s_2}d\tau
  \le C(s_1, s_2)(1+t)^{-s_2}.
 \end{equation}
\end{lemma}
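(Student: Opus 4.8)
The plan is to establish Lemma \ref{s1s2} by the classical device of breaking the convolution integral at the midpoint $\tau=t/2$ and treating each half with whichever of the two factors is large there frozen at its size $\sim(1+t)$. Concretely, I would write
\[
 \int_0^t(1+t-\tau)^{-s_1}(1+\tau)^{-s_2}\,d\tau
 = \int_0^{t/2}(1+t-\tau)^{-s_1}(1+\tau)^{-s_2}\,d\tau + \int_{t/2}^t(1+t-\tau)^{-s_1}(1+\tau)^{-s_2}\,d\tau
 =: J_1+J_2 .
\]
For $J_2$, when $\tau\in[t/2,t]$ one has $1+\tau\ge 1+\tfrac t2\ge\tfrac12(1+t)$, so $(1+\tau)^{-s_2}\le 2^{s_2}(1+t)^{-s_2}$; pulling this factor out and substituting $\sigma=t-\tau$ gives
\[
 J_2\le 2^{s_2}(1+t)^{-s_2}\int_0^{t/2}(1+\sigma)^{-s_1}\,d\sigma
 \le 2^{s_2}(1+t)^{-s_2}\int_0^\infty(1+\sigma)^{-s_1}\,d\sigma
 = \frac{2^{s_2}}{s_1-1}\,(1+t)^{-s_2},
\]
where finiteness of the last integral uses $s_1>1$; hence $J_2\le C(s_1,s_2)(1+t)^{-s_2}$.

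For $J_1$, when $\tau\in[0,t/2]$ one has $1+t-\tau\ge 1+\tfrac t2\ge\tfrac12(1+t)$, hence $(1+t-\tau)^{-s_1}\le 2^{s_1}(1+t)^{-s_1}$ and $J_1\le 2^{s_1}(1+t)^{-s_1}\int_0^{t/2}(1+\tau)^{-s_2}\,d\tau$. Now I would distinguish three cases according to the size of $s_2$. If $s_2>1$, the inner integral is bounded by $(s_2-1)^{-1}$, so $J_1\le C(1+t)^{-s_1}\le C(1+t)^{-s_2}$ since $s_2\le s_1$. If $0\le s_2<1$, the inner integral is at most $(1-s_2)^{-1}(1+t)^{1-s_2}$, so $J_1\le C(1+t)^{1-s_1-s_2}\le C(1+t)^{-s_2}$ because $s_1\ge 1$. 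Finally, if $s_2=1$, the inner integral equals $\ln(1+\tfrac t2)\le\ln(1+t)$, so $J_1\le C(1+t)^{-s_1}\ln(1+t)\le C(1+t)^{-1}=C(1+t)^{-s_2}$ because $s_1>1$ strictly. Adding the bounds for $J_1$ and $J_2$ yields the assertion.

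I do not anticipate any genuine obstacle here; the estimate is entirely routine once the split at $t/2$ is made. The only delicate point is the borderline exponent $s_2=1$, where the inner integral in $J_1$ produces a logarithm that has to be absorbed into the surplus decay of $(1+t)^{-s_1}$ — this is precisely why the hypothesis is the strict inequality $s_1>1$ rather than $s_1\ge1$. One should also note where the assumption $s_2\le s_1$ enters, namely only in the case $s_2>1$, in order to convert the $(1+t)^{-s_1}$ bound into the desired $(1+t)^{-s_2}$.
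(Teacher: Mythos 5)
Your proof is correct and complete: the split at $\tau=t/2$, freezing whichever factor is large on each half, and the three-way case analysis for the inner integral in $J_1$ (including the logarithmic borderline $s_2=1$, where the strict inequality $s_1>1$ is genuinely needed) all check out. The paper itself does not supply a proof of this lemma — it is stated as a known estimate with a citation to \cite{DUYZ1} — so there is nothing to compare against, but your argument is the standard one used in that reference and throughout the literature on convolution-type decay estimates.
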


Next we estimate the decay rate on the lower--frequent part of the solution as:
\begin{lemma}\label{es-fre-op}
 Assume that the assumptions of Proposition \ref{es-thm-M} are in force. Then it holds
 \begin{equation}\label{low-fre-varrho1}
  \|(\varrho_1^L, \varrho_2^L)(t)\|_{L^2}
  \le \left(CK_0 + C\mathcal{M}^2(t) \right)(1+t)^{-\frac34},
 \end{equation}
 and
 \begin{equation}\label{low-fre-m1}
  \|(m_1^L, m_2^L, (\nabla\phi)^L)(t)\|_{L^2}
  \le C\left(K_0+ \mathcal{M}^2(t) \right)(1+t)^{-\frac32\left(\frac1p - \frac12\right)}.
 \end{equation}
\end{lemma}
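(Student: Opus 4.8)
The plan is to start from the Duhamel representation \eqref{so-ex}, project onto low frequencies, and combine the linear decay estimates of Proposition \ref{li-de-es1} with the nonlinear estimates of Proposition \ref{prop-decay4}, controlling the nonlinear source terms via the definition of $\mathcal{M}(t)$ in \eqref{M1} and the time-integral bound of Lemma \ref{s1s2}. For the momentum/velocity and electric field estimate \eqref{low-fre-m1}, I would apply \eqref{li-v-1} to the linear part (which, thanks to the low-frequency assumption $\|\nabla\phi_0\|_{L^p}<\delta_0$ in \eqref{k0}, yields the rate $(1+t)^{-\frac32(\frac1p-\frac12)}$) and \eqref{decay-S-n} to the nonlinear part. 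The key point is that $\|\mathcal{N}^L(\tau)\|_{L^1}$ is bounded by a sum of quadratic terms like $\|(\varrho_i, m_i, \nabla\phi)\|_{L^2}\cdot\|\nabla(\varrho_i, m_i, \nabla\phi)\|_{L^2}$ (Cauchy--Schwarz on products, and the $\nabla\phi$ term $Z\varrho_1\nabla\phi-\varrho_2\nabla\phi$ contributes $\|\varrho_i\|_{L^2}\|\nabla\phi\|_{L^2}$), each of which is $\lesssim \mathcal{M}^2(t)(1+\tau)^{-\frac32(\frac1p-\frac12)-\frac34-\frac14}$ or better; then $\int_0^t(1+t-\tau)^{-\frac34}(1+\tau)^{-\frac32(\frac1p-\frac12)-1}\,d\tau\lesssim(1+t)^{-\frac32(\frac1p-\frac12)}$ by Lemma \ref{s1s2}, since the exponent $\frac32(\frac1p-\frac12)+1 > 1 > \frac34$, closing that estimate.

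The genuinely delicate part is \eqref{low-fre-varrho1}, the density estimate, precisely because of the logarithmic obstruction flagged in the introduction around \eqref{1.19}--\eqref{1.20}: the naive bound $\|(S^\varrho_1,S^\varrho_2)^L\|_{L^2}\lesssim(1+t-\tau)^{-\frac34}\|\mathcal{N}^L\|_{L^1}$ forces $\int_0^t(1+t-\tau)^{-\frac34}(1+\tau)^{-1}\,d\tau\sim(1+t)^{-\frac34}\ln(1+t)$ at $p=\frac32$. The remedy is to use the \emph{refined} density estimate \eqref{decay-S-varrho}, which exploits the divergence structure of the nonlinearity visible in \eqref{S3}: $\|\nabla^k((S^\varrho_1)^L,(S^\varrho_2)^L)\|_{L^2}\lesssim(1+t-\tau)^{-\frac54-\frac k2}\big(\|(\nabla\phi)^L\|_{L^2}^2+\|(\mathcal{N}^L,\mathbb{F}_1^L,\mathbb{F}_2^L)\|_{L^1}\big)$ — one gains an extra half power of decay in the kernel. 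I would apply \eqref{so-ex}, \eqref{li-varrho-1} to the linear part (giving $CK_0(1+t)^{-\frac34}$, using $\|U_0^L\|_{L^1}\lesssim K_0$), and \eqref{decay-S-varrho} to the nonlinear part. One checks $\|(\nabla\phi)^L(\tau)\|_{L^2}^2\lesssim\mathcal{M}^2(t)(1+\tau)^{-3(\frac1p-\frac12)}$, $\|\mathbb{F}_i^L\|_{L^1}$ and $\|\mathcal{N}^L\|_{L^1}$ are quadratic and $\lesssim\mathcal{M}^2(t)(1+\tau)^{-\frac32(\frac1p-\frac12)-\frac34-\frac14}$, so the worst term is $\mathcal{M}^2(t)\int_0^t(1+t-\tau)^{-\frac54}(1+\tau)^{-\frac32(\frac1p-\frac12)-1}\,d\tau$; since $\frac54>1$ and $\frac32(\frac1p-\frac12)+1\ge\frac34$, Lemma \ref{s1s2} (with $s_1=\frac54$, $s_2=\min\{\frac32(\frac1p-\frac12)+1,\frac54\}\ge\frac34$) gives a bound $\lesssim\mathcal{M}^2(t)(1+t)^{-\frac34}$, which is exactly \eqref{low-fre-varrho1}.

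I would also need to verify the bookkeeping point that all the $L^2$ norms appearing when estimating $\|\mathcal{N}^L\|_{L^1}$, $\|\mathbb{F}_i^L\|_{L^1}$, $\|(\nabla\phi)^L\|_{L^2}$ are indeed controlled by $\mathcal{M}(t)$: products $\frac{m_i\otimes m_i}{\rho_i}$, $\frac{\varrho_i m_i}{\rho_i}$ and the pressure remainder $P_i(\rho_i)-P_i(\bar\rho_i)-P_i'(\bar\rho_i)\rho_i$ are all at least quadratic in the perturbation, so after Hölder (e.g. $\|fg\|_{L^1}\le\|f\|_{L^2}\|g\|_{L^2}$, and for the terms with one derivative, $\|f\nabla g\|_{L^1}\le\|f\|_{L^2}\|\nabla g\|_{L^2}$) each factor is an $\|\nabla^k(\cdot)\|_{L^2}$ with $k\in\{0,1\}$, hence bounded by $\mathcal{M}(t)$ times the appropriate power of $(1+\tau)$; here one uses the a priori smallness from Proposition \ref{exist-u} to control the nonlinear denominators $\frac1{\rho_i}$ in $L^\infty$. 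The only subtlety is to keep the decay exponents sharp enough that the time integrals do not produce logarithms, which is guaranteed precisely by the half-power gain in \eqref{decay-S-varrho}. Assembling the linear and nonlinear contributions gives \eqref{low-fre-varrho1} and \eqref{low-fre-m1}, completing the proof.
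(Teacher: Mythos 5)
Your proposal correctly identifies the central idea — use the Duhamel formula \eqref{so-ex}, Proposition \ref{li-de-es1} for the linear part with $\|U_0^L\|_{L^1}\lesssim K_0$ and $\|\nabla\phi_0\|_{L^p}\lesssim K_0$, and the refined density estimate \eqref{decay-S-varrho} (which encodes the divergence structure of the nonlinearity via \eqref{S3}) to avoid the logarithmic loss. But the quantitative execution has several errors, and one of them is fatal: without the time-splitting $\int_0^t=\int_0^{t/2}+\int_{t/2}^t$ and the switch between \emph{two different} nonlinear estimates on the two regions, the argument cannot reach $(1+t)^{-3/4}$.

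Concretely, the source terms in \eqref{decay-S-varrho} do not decay as fast as you claim. One has
\[
\|\mathcal{N}^L(\tau)\|_{L^1}\lesssim \mathcal{M}^2(t)(1+\tau)^{-\frac32(\frac1p-\frac12)-\frac34},\qquad
\|(\mathbb{F}_1^L,\mathbb{F}_2^L)(\tau)\|_{L^1},\ \|(\nabla\phi)^L(\tau)\|_{L^2}^2\lesssim\mathcal{M}^2(t)(1+\tau)^{-3(\frac1p-\frac12)},
\]
exactly as in \eqref{es-no-N}--\eqref{es-no-F}. Your rate $(1+\tau)^{-\frac32(\frac1p-\frac12)-1}$ for $\mathcal{N}^L$ double-counts a factor of $(1+\tau)^{-1/4}$ (the $\|\nabla\phi\|_{L^2}\sim(1+\tau)^{-\frac32(\frac1p-\frac12)}$ decay is already at most $(1+\tau)^{-1/4}$ for $p=\frac32$, and adding a separate $-\frac14$ is not justified), and the rate you assign to $\mathbb{F}_i^L$ misses that it is a pure product of momenta, decaying only as $(1+\tau)^{-3(\frac1p-\frac12)}$, i.e. merely $(1+\tau)^{-1/2}$ at $p=\frac32$. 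With these correct rates, the slowest source term inside \eqref{decay-S-varrho} is $(1+\tau)^{-3(\frac1p-\frac12)}$, and applying \eqref{decay-S-varrho} on all of $[0,t]$ together with Lemma \ref{s1s2} (with $s_1=\frac54$, $s_2=3(\frac1p-\frac12)$) produces only $(1+t)^{-3(\frac1p-\frac12)}$, which at $p=\frac32$ is $(1+t)^{-1/2}$ — short of the needed $(1+t)^{-3/4}$. So a single-estimate, single-interval argument cannot close.

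The paper's proof resolves this by splitting the Duhamel integral at $t/2$: on $[0,t/2]$ the kernel $(1+t-\tau)^{-5/4}\approx(1+t)^{-5/4}$ is at full strength, so $(1+t)^{-5/4}\int_0^{t/2}(1+\tau)^{-3(\frac1p-\frac12)}d\tau\lesssim(1+t)^{-\frac14-3(\frac1p-\frac12)}\le(1+t)^{-3/4}$; on $[t/2,t]$ one \emph{must switch} to the estimate \eqref{decay-S-n}, whose source is $\|\mathcal{N}^L\|_{L^1}$ alone (not the slower $\mathbb{F}_i^L$ or $\|(\nabla\phi)^L\|_{L^2}^2$), so the source factor $(1+\tau)^{-\frac32(\frac1p-\frac12)-\frac34}\approx(1+t)^{-\frac32(\frac1p-\frac12)-\frac34}$ is strong while the kernel costs only $\int_{t/2}^t(1+t-\tau)^{-3/4}d\tau\lesssim(1+t)^{1/4}$, yielding again $(1+t)^{-\frac32(\frac1p-\frac12)-\frac12}\le(1+t)^{-3/4}$. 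Using a different one of the two nonlinear estimates on each half of the time axis is not an optional simplification but the essential mechanism.

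Finally, a smaller point: Lemma \ref{s1s2} requires $s_1>1$, so you cannot invoke it for \eqref{low-fre-m1} where the kernel exponent is $s_1=\frac34<1$; there too you need the explicit $t/2$ split, with the $[0,t/2]$ piece bounded by $(1+t)^{-3/4}$ times a convergent (or at worst logarithmic) $\tau$-integral, and the $[t/2,t]$ piece bounded by $(1+t)^{-\frac32(\frac1p-\frac12)-\frac34}\cdot(1+t)^{1/4}$, both of which are $\lesssim(1+t)^{-\frac32(\frac1p-\frac12)}$ since $\frac32(\frac1p-\frac12)\le\frac34$ for $p\ge1$.
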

\begin{proof}
 To derive the decay on $\varrho_i^L$ and $m_i^L$ with $i = 1,2$, we need to estimate the nonlinear terms in \eqref{N1N2-m} as follows. By the definition \eqref{M1} of $\mathcal{M}(t)$, the h\"{o}lder inequality and the fact that $1\le p\le \frac32$, we have from \eqref{N1N2-m} that
 \begin{equation}\label{es-no-N}
  \begin{split}
  \|\mathcal{N}^L\|_{L^1}
  &\le C\|(\varrho_1, m_1, \varrho_2, m_2, \nabla\phi)\|_{L^2}\|(\varrho_1, \nabla m_1, \varrho_2, \nabla m_2)\|_{H^1}
  \\
  &\le C(1+t)^{-\frac32\left(\frac1p - \frac12\right)}\mathcal{M}(t)(1+t)^{-\frac34}\mathcal{M}(t)
  \\
  &\le C(1+t)^{-\frac32\left(\frac1p - \frac12\right)-\frac34}\mathcal{M}^2(t),
  \end{split}
 \end{equation}
 and
 \begin{equation}\label{es-no-F}
  \begin{split}
  \|(\mathbb{F}_1^L, \mathbb{F}_2^L)\|_{L^1}
  &\le C\|(\varrho_1, m_1, \varrho_2, m_2)\|_{L^2}\|(\varrho_1, m_1, \varrho_2, m_2)\|_{H^1}
  \\
  &\le C(1+t)^{-3\left(\frac1p - \frac12\right)}\mathcal{M}^2(t),
  \end{split}
 \end{equation}

 By \eqref{li-varrho-1}, \eqref{decay-S-varrho}, \eqref{decay-S-n} with $k=0$ and \eqref{es-no-N}--\eqref{es-no-F}, we have from \eqref{so-ex} that
 \begin{equation}\label{low-fre-varrho-de1}
  \begin{split}
  &\|(\varrho_1^L, \varrho_2^L)(t)\|_{L^2}
  \\
  &= \|(\widehat{\varrho_1^L}, \widehat{\varrho_2^L})(t)\|_{L^2}
  \\
  &\le C(1+t)^{-\frac34}\|U_0\|_{L^1} + \int_0^{\frac t2}\|((S^\varrho_1)^L, (S^\varrho_2)^L)\|_{L^2}d\tau + \int_{\frac t2}^t\|((S^\varrho_1)^L, (S^\varrho_2)^L)\|_{L^2}d\tau
  \\
  &\le C(1+t)^{-\frac34}\|U_0\|_{L^1} + C\int_0^{\frac t2}(1+t-\tau)^{-\frac54}\left(\|(\nabla\phi)^L(\tau)\|_{L^2}^2 +\|(\mathcal{N}^L, \mathbb{F}_1^L, \mathbb{F}_2^L)(\tau)\|_{L^1}\right) d\tau
  \\
  &\qquad +C\int_{\frac t2}^t(1+t-\tau)^{-\frac34}\|\mathcal{N}^L(\tau)\|_{L^1}d\tau
  \\
  &\le (1+t)^{-\frac34}\|U_0\|_{L^1}+ C\int_0^{\frac t2}(1+t -\tau)^{-\frac54}\left((1+\tau)^{-3\left(\frac1p - \frac12\right)}\mathcal{M}^2(\tau) +(1+ \tau)^{-\frac 32\left(\frac1p - \frac12\right) -\frac34}\mathcal{M}^2(\tau)\right)d\tau
  \\
  &\qquad+ C\int_{\frac t2}^t(1+t-\tau)^{-\frac34}(1+ \tau)^{-\frac 32\left(\frac1p - \frac12\right) -\frac34}\mathcal{M}^2(\tau)d\tau
  \\
  &\le C(1+t)^{-\frac34}\|U_0\|_{L^1}+ C\mathcal{M}^2(t)(1+t)^{-\frac54}\int_0^{\frac t2}(1+\tau)^{-3\left(\frac1p - \frac12\right)}d\tau
  \\
  &\qquad+ C\mathcal{M}^2(t)(1+ t)^{-\frac 32\left(\frac1p - \frac12\right) -\frac34}\int_{\frac t2}^t(1+t-\tau)^{-\frac34}d\tau
  \\
  &\le C(K_0 +\mathcal{M}^2(t))(1+t)^{-\frac34},
  \end{split}
 \end{equation}
 where the monotonicity of $\mathcal{M}(t)$ is used. And \eqref{low-fre-varrho-de1} yields \eqref{low-fre-varrho1}.

 Next by \eqref{4eq}, \eqref{li-v-1}, \eqref{li-omega-1}, \eqref{decay-S-n} with $k=0$ and \eqref{es-no-N}, we have from \eqref{so-ex} that
 \begin{equation}\nonumber
  \begin{split}
  &\|(m_1^L, m_2^L, (\nabla\phi)^L)(t)\|_{L^2}
  \\
  &= \|(n_1^L, n_2^L, M_1^L, M_2^L, (\nabla\phi)^L)(t)\|_{L^2}
  \\
  &\le C(1+t)^{-\frac34}\|U_0\|_{L^1} + C(1+t)^{-\frac32\left(\frac1p - \frac12\right)}\|\nabla\phi_0\|_{L^p} + C\int_0^t (1+t-\tau)^{-\frac34}\|\mathcal{N}^L(\tau)\|_{L^1}d\tau
  \\
  &\le C(1+t)^{-\frac32\left(\frac1p - \frac12\right)}K_0 + C\mathcal{M}^2(t)(1+t)^{-\frac34}\int_0^{\frac t2}(1+\tau)^{-\frac32\left(\frac1p - \frac12\right)-\frac34}d\tau
  \\
  &\qquad+ C\mathcal{M}^2(t)(1+ t)^{-\frac 32\left(\frac1p - \frac12\right) -\frac34}\int_{\frac t2}^t(1+t-\tau)^{-\frac34}d\tau
  \\
  &\le C(1+t)^{-\frac32\left(\frac1p - \frac12\right)}(K_0 + \mathcal{M}^2(t)),
  \end{split}
 \end{equation}
 which gets rise to \eqref{low-fre-m1}. Hence we finish the proof of Lemma \ref{es-fre-op}.
\end{proof}

\begin{lemma}\label{le-es-loworder1}
 Assume that the assumptions of Proposition \ref{es-thm-M} are in force. Then it holds
 \begin{equation}\nonumber
  \|\nabla^l(\varrho_1^L, \varrho_2^L)(t)\|_{L^2}
  \le C\left(K_0 + \mathcal{M}^2(t)\right)(1+t)^{-\frac34 - \frac l2}
 \end{equation}
 and
 \begin{equation}\nonumber
  \left\|\nabla^l(m_1^L, m_2^L, (\nabla\phi)^L)(t)\right\|_{L^2}
  \le C(K_0 +\mathcal{M}^2(t))(1+t)^{-\frac32\left(\frac1p -\frac12\right) - \frac l2}.
 \end{equation}
\end{lemma}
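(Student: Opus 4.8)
The plan is to re-run the Duhamel argument of Lemma~\ref{es-fre-op}, now applied to $\nabla^l$ of the solution, splitting the time integral at $t/2$. From \eqref{so-ex} one writes $\nabla^l(\varrho_1^L,\varrho_2^L)(t)$ as the linear part $\nabla^l(e^{tB}\ast U_0)^L|_{(\varrho_1,\varrho_2)}$ plus $\bigl(\int_0^{t/2}+\int_{t/2}^t\bigr)\|\nabla^l((S^\varrho_1)^L,(S^\varrho_2)^L)(\tau)\|_{L^2}\,d\tau$, and similarly for $\nabla^l(m_1^L,m_2^L,(\nabla\phi)^L)$, using in addition the heat part \eqref{4eq} for the incompressible pieces $M_i$. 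For the linear term I would apply \eqref{li-varrho-1}, \eqref{li-v-1} and \eqref{li-omega-1} with $k=l$; this directly gives $C(1+t)^{-3/4-l/2}\|U_0\|_{L^1}$ for the densities and $C(1+t)^{-3/4-l/2}\|U_0\|_{L^1}+C(1+t)^{-\frac32(\frac1p-\frac12)-l/2}\|(\nabla\phi_0)^L\|_{L^p}$ for the momenta, i.e.\ $\le CK_0$ times the asserted weights. On $[0,t/2]$ I keep all $l$ derivatives on the semigroup, using \eqref{decay-S-varrho} (resp.\ \eqref{decay-S-n}) with $k=l$; since $1+t-\tau\sim1+t$ there, the kernel factors out as $(1+t)^{-5/4-l/2}$ (resp.\ $(1+t)^{-3/4-l/2}$), while by \eqref{es-no-N}, \eqref{es-no-F} and \eqref{low-fre-m1} the remaining integral of $\|(\mathcal N^L,\mathbb F_1^L,\mathbb F_2^L)(\tau)\|_{L^1}+\|(\nabla\phi)^L(\tau)\|_{L^2}^2$ over $[0,t/2]$ grows at most like a small power of $t$ (a logarithm at $p=\tfrac32$), which is absorbed by the extra decay of the kernel; Lemma~\ref{s1s2} and the monotonicity of $\mathcal M(t)$ leave $C\mathcal M^2(t)$ times the desired weight. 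This part is routine once the zeroth-order case is in hand.

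The genuinely new point is the near-diagonal piece $\int_{t/2}^t$, where $t-\tau$ descends to $0$. Here keeping all $l$ derivatives on the semigroup would force the non-integrable kernel $(1+t-\tau)^{-3/4-l/2}$, whereas moving them onto $\mathcal N$ naively requires $\nabla^{l+1}$ or even $\nabla^{l+2}$ of the unknowns — recall $\mathcal N$ contains $\mathrm{div}\,\mathbb F_i$ with $\mathbb F_i$ already carrying $\nabla(\varrho_im_i/\rho_i)$ in \eqref{N1N2-m}, plus the $\Lambda^{-1}\mathrm{div}$ — and those do not belong to the solution space. The way I would handle this is to pass to the Fourier side, exactly as in the derivation of \eqref{S3}: the ``extra'' structural derivatives are simply powers of $\xi$ multiplying the cut-off $\varphi(\xi)$, hence \emph{bounded} on $\{|\xi|\le\eta\}$, so one may transfer precisely $\nabla^l$ (and no more) onto the quadratic products $\varrho_iu_i$, $m_i\otimes m_i/\rho_i$, $\nabla\phi\otimes\nabla\phi$ and the pressure remainders, the leftover $\xi$-powers combining with $e^{-c|\xi|^2(t-\tau)}$ into an integrable residual kernel. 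In practice this amounts to reading \eqref{decay-S-varrho2} with $k=l$ and rewriting $\|\nabla^l\mathcal N^L(\tau)\|_{L^2}$ as $\|\nabla^l(\varrho_iu_i,\,m_i\otimes m_i/\rho_i,\,\nabla\phi\otimes\nabla\phi,\dots)(\tau)\|_{L^2}$ up to a harmless multiplier, giving
\begin{equation}\nonumber
\|\nabla^l((S^\varrho_1)^L,(S^\varrho_2)^L)(\tau)\|_{L^2}\lesssim (1+t-\tau)^{-1/2}\bigl\|\nabla^l\bigl(\varrho_iu_i,\tfrac{m_i\otimes m_i}{\rho_i},\nabla\phi\otimes\nabla\phi,\dots\bigr)(\tau)\bigr\|_{L^2}.
\end{equation}
On $[t/2,t]$ one has $1+\tau\sim1+t$, so by a Gagliardo--Nirenberg estimate and the definition \eqref{M1} of $\mathcal M$ the right-hand side is bounded by $C\mathcal M^2(t)(1+t)^{-3(\frac1p-\frac12)-3/4-l/2}$ (the slowest contributions being $m_i\otimes m_i/\rho_i$ and $\nabla\phi\otimes\nabla\phi$); since $1\le p\le\tfrac32$ this exponent is $\le-(5/4+l/2)$, and $\int_{t/2}^t(1+t-\tau)^{-1/2}\,d\tau\lesssim(1+t)^{1/2}$ then leaves exactly $C\mathcal M^2(t)(1+t)^{-3/4-l/2}$. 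An equivalent route, closer to the wording of the introduction, is to split $[t/2,t]$ once more and, near $\tau=t$, pass to the density--velocity system \eqref{2eq}, split each nonlinear factor into low and high frequencies, and absorb the troublesome high-frequency factor by Cauchy--Schwarz in $\tau$ against the space--time bound $\int_0^\infty\|\nabla^{l+1}(u_1,u_2)\|_{L^2}^2\,d\tau\le CC_0$ from Proposition~\ref{exist-u}. I expect this near-$\tau=t$ balancing to be the crux of the proof.

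For $\nabla^l(m_1^L,m_2^L,(\nabla\phi)^L)$ the scheme is identical; the slower target weight $(1+t)^{-\frac32(\frac1p-\frac12)-l/2}$ now issues from the $\|(\nabla\phi_0)^L\|_{L^p}$ term in \eqref{li-v-1} together with the heat estimate \eqref{li-omega-1} for $M_i$, while the nonlinear contributions, treated as above (and in fact faster), do not govern the rate. Summing the linear, the $[0,t/2]$ and the $[t/2,t]$ contributions and using $C_0\le\delta_0$, $K_0<\delta_0$ then gives $\|\nabla^l(\varrho_1^L,\varrho_2^L)(t)\|_{L^2}\le C(K_0+\mathcal M^2(t))(1+t)^{-3/4-l/2}$ and $\|\nabla^l(m_1^L,m_2^L,(\nabla\phi)^L)(t)\|_{L^2}\le C(K_0+\mathcal M^2(t))(1+t)^{-\frac32(\frac1p-\frac12)-l/2}$, which are the two assertions of Lemma~\ref{le-es-loworder1}.
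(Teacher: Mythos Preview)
Your proposal is correct and follows essentially the same route as the paper: the Duhamel split at $t/2$, the use of \eqref{decay-S-varrho} and \eqref{decay-S-n} with $k=l$ on $[0,t/2]$, and the use of \eqref{decay-S-varrho2} with $k=l$ on $[t/2,t]$ together with the observation that extra $\xi$-powers are harmless on $\{|\xi|\le\eta\}$ (the paper phrases this as ``$\|\nabla f^L\|_{L^2}\lesssim\|f\|_{L^2}$''), all match. Two small remarks: your ``logarithm at $p=\tfrac32$'' is actually a $(1+t)^{1/2}$ growth (since $3(\tfrac1p-\tfrac12)=\tfrac12$ there), though this is still absorbed by the extra $-\tfrac12$ in the kernel exactly as you say; and for the momenta on $[t/2,t]$ the paper invokes the $L^1$ estimate \eqref{decay-S-n2} with $k=l$ (bounding $\|\nabla^l\mathcal N^L\|_{L^1}\lesssim(1+\tau)^{-3(\frac1p-\frac12)-l/2}\mathcal M^2$) rather than an $L^2$ analogue, but your ``treated as above'' is compatible with this and the outcome is the same.
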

\begin{proof}
 Since by Lemma \ref{1interpolation}, Lemma \ref{es-product} and Lemma \ref{infty}, we can estimate the derivatives of the nonlinear term \eqref{N1N2-m} as
 \begin{equation}\label{es-no-N-high-L2}
  \begin{split}
  \|\nabla^l\mathcal{N}^L\|_{L^2}
  &\lesssim \|\nabla^l(\varrho_1\nabla\phi, \varrho_2\nabla\phi)\|_{L^2}
  \\
  &\qquad +\Bigg\|\nabla^l\left(\frac{m_1\otimes m_1}{\varrho_1}, \left(P_1(\rho_1) - P_1\left(\frac1 Z\right) - P'_1\left(\frac1 Z\right)\varrho_1\right)\mathbb{I}_3, \right.
  \\
  &\qquad\qquad\qquad\left. \frac{m_2\otimes m_2}{\varrho_2}, \left(P_2(\rho_2) - P_2(1) - P'_2(1)\varrho_2\right)\mathbb{I}_3\right)\Bigg\|_{L^2}
  \\
  &\qquad +\Bigg\|\nabla^{l-1}\left(\nabla\left(\frac{\varrho_1m_1}{\rho_1}\right), {\rm div}\left(\frac{\varrho_1m_1}{\rho_1}\right), \nabla\left(\frac{\varrho_2m_2}{\rho_2}\right), {\rm div}\left(\frac{\varrho_2m_2}{\rho_2}\right)\right)\Bigg\|_{L^2}
  \\
  &\lesssim \|(\varrho_1, m_1, \varrho_2, m_2, \nabla\phi)\|_{L^\infty}\|\nabla^l(\varrho_1, m_1, \varrho_2, m_2, \nabla\phi)\|_{L^2}
  \\
  &\lesssim \|(\varrho_1, m_1, \varrho_2, m_2, \nabla\phi)\|_{L^2}^\frac14\|\nabla^2(\varrho_1, m_1, \varrho_2, m_2, \nabla\phi)\|_{L^2}^\frac34\|\nabla^l(\varrho_1, m_1, \varrho_2, m_2, \nabla\phi)\|_{L^2}
  \\
  &\lesssim (1+t)^{-3\left(\frac1p - \frac12\right)-\frac34 -\frac l2}\mathcal{M}^2(t).
  \end{split}
 \end{equation}
 Here the fact that $\|\nabla f^L\|_{L^2}\lesssim \|f\|_{L^2}$ is used to dealt with the $l+1$--order derivatives. And similarly we have
 \begin{equation}\label{es-no-N-high-L1}
  \begin{split}
  \|\nabla^l\mathcal{N}^L\|_{L^1}
  &\lesssim \|\nabla^l(\varrho_1\nabla\phi, \varrho_2\nabla\phi)\|_{L^1}
  \\
  &\qquad +\Bigg\|\nabla^{l-1}{\rm div}\left(\frac{m_1\otimes m_1}{\varrho_1}, \left(P_1(\rho_1) - P_1\left(\frac1 Z\right) - P'_1\left(\frac1 Z\right)\varrho_1\right)\mathbb{I}_3, \right.
  \\
  &\qquad\qquad\qquad\qquad\left. \frac{m_2\otimes m_2}{\varrho_2}, \left(P_2(\rho_2) - P_2(1) - P'_2(1)\varrho_2\right)\mathbb{I}_3\right)\Bigg\|_{L^1}
  \\
  &\qquad +\Bigg\|\nabla^{l-2}{\rm div}\left(\nabla\left(\frac{\varrho_1m_1}{\rho_1}\right), {\rm div}\left(\frac{\varrho_1m_1}{\rho_1}\right), \nabla\left(\frac{\varrho_2m_2}{\rho_2}\right), {\rm div}\left(\frac{\varrho_2m_2}{\rho_2}\right)\right)\Bigg\|_{L^1}
  \\
  &\lesssim \|(\varrho_1, m_1, \varrho_2, m_2, \nabla\phi)\|_{L^2}\|\nabla^l(\varrho_1, m_1, \varrho_2, m_2, \nabla\phi)\|_{L^2}
  \\
  &\lesssim (1+t)^{-3\left(\frac1p - \frac12\right)-\frac l2}\mathcal{M}^2(t).
  \end{split}
 \end{equation}

 Thus by using \eqref{li-varrho-1}, \eqref{decay-S-varrho}, \eqref{decay-S-varrho2} with $k=l$, \eqref{es-no-N}, \eqref{es-no-F}, and \eqref{es-no-N-high-L2}, we have from \eqref{so-ex} that
 \begin{align}
  \notag&\|\nabla^l(\varrho_1^L, \varrho_2^L)(t)\|_{L^2}
  \\
  \notag&\le C(1+t)^{-\frac34 -\frac l2}\|U_0\|_{L^1} + \int_0^{\frac t2}\|\nabla^l((S^\varrho_1)^L, (S^\varrho_2)^L)\|_{L^2}d\tau + \int_{\frac t2}^t\|\nabla^l((S^\varrho_1)^L, (S^\varrho_2)^L)\|_{L^2}d\tau
  \\
  \notag&\le C(1+t)^{-\frac34 -\frac l2}\|U_0\|_{L^1} + C\int_0^{\frac t2}(1+t-\tau)^{-\frac54 -\frac l2}\left(\|(\nabla\phi)^L(\tau)\|_{L^2}^2 +\|(\mathcal{N}, \mathbb{F}_1, \mathbb{F}_2)(\tau)\|_{L^1}\right) d\tau
  \\
  \notag&\qquad +C\int_{\frac t2}^t
  (1+t-\tau)^{-\frac12}\|\nabla^l\mathcal{N}^L(\tau)\|_{L^2}d\tau
  \\
  \notag&\le (1+t)^{-\frac34 -\frac l2}\|U_0\|_{L^1}+ C\int_0^{\frac t2}(1+t -\tau)^{-\frac54 -\frac l2}(1+\tau)^{-3\left(\frac1p - \frac12\right)}\mathcal{M}^2(\tau)d\tau
  \\
  \notag&\qquad+ C\int_{\frac t2}^t(1+t-\tau)^{-\frac12}(1+ \tau)^{-3\left(\frac1p - \frac12\right) -\frac34 -\frac l2}\mathcal{M}^2(\tau)d\tau
  \\
  \notag&\le C(1+t)^{-\frac34 -\frac l2}\|U_0\|_{L^1}+ C\mathcal{M}^2(t)(1+t)^{-\frac54 -\frac l2}\int_0^{\frac t2}(1+\tau)^{-3\left(\frac1p - \frac12\right)}d\tau
  \\
  \notag&\qquad+ C\mathcal{M}^2(t)(1+ t)^{-3\left(\frac1p - \frac12\right) - \frac34 -\frac l2}\int_{\frac t2}^t(1+t-\tau)^{-\frac12}d\tau
  \\
  \notag&\le C\left(K_0 +\mathcal{M}^2(t)\right)(1+t)^{-\frac34 -\frac l2}.
 \end{align}

 Moreover, by using \eqref{4eq}, \eqref{li-v-1}, \eqref{li-omega-1}, \eqref{decay-S-n}, \eqref{decay-S-n2} with $k=l$, and \eqref{es-no-N}--\eqref{es-no-N-high-L1}, we have from \eqref{so-ex} that
 \begin{align*}
  &\|\nabla^l(m_1^L, m_2^L, (\nabla\phi)^L)(t)\|_{L^2}
  \\
  &= \|\nabla^l(n_1^L, n_2^L, M_1^L, M_2^L, (\nabla\phi)^L)(t)\|_{L^2}
  \\
  &\le C(1+t)^{-\frac34 -\frac l2}\|U_0\|_{L^1} + C(1+t)^{-\frac32\left(\frac 1p -\frac 12\right) -\frac l2}\|\nabla\phi_0\|_{L^p} +\int_0^\frac t2 (1+t-\tau)^{-\frac34 -\frac l2}\|\mathcal{N}^L(\tau)\|_{L^1}d\tau
  \\
  &\qquad +\int_\frac t2 ^t (1+t-\tau)^{-\frac34}\|\nabla^l\mathcal{N}^L(\tau)\|_{L^1}d\tau
  \\
  &\le CK_0(1+t)^{-\frac32\left(\frac 1p -\frac 12\right) -\frac l2} +C\mathcal{M}^2(t)(1+t)^{-\frac34 -\frac l2}\int_0^{\frac t2}(1+\tau)^{-\frac32\left(\frac1p - \frac12\right)-\frac34}d\tau
  \\
  &\qquad+ C\mathcal{M}^2(t)(1+ t)^{-3\left(\frac1p - \frac12\right) -\frac l2}\int_{\frac t2}^t(1+t-\tau)^{-\frac34}d\tau
  \\
  &\le C(1+t)^{-\frac32\left(\frac1p - \frac12\right) -\frac l2}(K_0 + \mathcal{M}^2(t)).
 \end{align*}

 Thus we finish the proof of Lemma \ref{le-es-loworder1}.
\end{proof}

Now we turn to estimate the decay rate on the higher--frequency of the highest--order derivative of the solution, and we state the result in the following:
\begin{lemma}\label{le-es-highorder1}
 Assume that the assumptions of Proposition \ref{es-thm-M} are in force. Then it holds
 \begin{equation}\label{high-fre-varrho-m1}
  \|\nabla^l(\varrho_1^H, m_1^H, \varrho_2^H, m_2^H, (\nabla\phi)^H)(t)\|_{L^2}
  \le C\left(C_0 + \mathcal{M}^2(t)\right)(1+t)^{-\frac34 - \frac l2}.
 \end{equation}
\end{lemma}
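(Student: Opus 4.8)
The plan is to work with the \emph{velocity} formulation \eqref{2eq}--\eqref{N1N2N3N4} rather than the momentum formulation \eqref{2eq-m}, and with the \emph{high--frequency projection} of that system rather than the full system; both choices are essential. On the one hand, as pointed out in the Introduction, after one differentiation the nonlinear terms $N^m_i$ in \eqref{N1N2-m} produce $\nabla^{l+1}(\varrho_i m_i)$, which does not belong to the solution space, whereas the worst nonlinear terms in \eqref{N1N2N3N4}, namely $\tfrac{\mu_i Z\varrho_i}{\rho_i}\Delta u_i$ and $\tfrac{\nu_i Z\varrho_i}{\rho_i}\nabla{\rm div}\,u_i$ (and their $i=2$ analogues), carry no outer derivative, so that after one integration by parts they reduce to $\nabla^{l-1}(\varrho_i\Delta u_i)$--type quantities that can be controlled. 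On the other hand, a $\nabla^l$--energy estimate on the \emph{full} system would force the reconstruction $\|\nabla^l u_i\|_{L^2}^2\le\|\nabla^l u_i^L\|_{L^2}^2+\|\nabla^{l+1}u_i^H\|_{L^2}^2$, leaving the residual $\|\nabla^l u_i^L\|_{L^2}^2$ whose rate $(1+t)^{-3(\frac1p-\frac12)-l}$ from Lemma \ref{le-es-loworder1} is too slow for $p>1$; projecting onto high frequencies removes this obstruction, because on $\{|\xi|\ge1\}$ one has $\|\nabla^l u_i^H\|_{L^2}\le\|\nabla^{l+1}u_i^H\|_{L^2}$ with no low--frequency remainder. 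Since the linear part of \eqref{2eq} has constant coefficients, the high--frequency cut--off commutes with it, so $(\varrho_i^H,u_i^H)$ solves \eqref{2eq} with the source terms $(N^\varrho_i)^H,\,(N^u_i)^H$.

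Next I would set up the highest--order energy--dissipation inequality for the high--frequency system: apply $\nabla^l$, take the $L^2$ inner product of the $\varrho_i^H$--equation with $a_i\nabla^l\varrho_i^H$ and of the $u_i^H$--equation with $\nabla^l u_i^H$ (with suitable positive constants $a_i$ chosen so that the pressure terms cancel), and sum over $i=1,2$. The Poisson couplings $\mp\nabla\Delta^{-1}(Z\varrho_1-\varrho_2)$ combine, after invoking the continuity equations, into a perfect time derivative of a term of the form $\|\Lambda^{l-1}(\cdots)^H\|_{L^2}^2$ plus nonlinear contributions, while the viscous terms produce dissipation $\gtrsim\|\nabla^{l+1}(u_1^H,u_2^H)\|_{L^2}^2$. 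To recover the density dissipation I would add the small interaction functional $\varepsilon\sum_i\int\nabla^{l-1}u_i^H\cdot\nabla^l\varrho_i^H\,dx$, whose time derivative yields $-c\|\nabla^l(\varrho_1^H,\varrho_2^H)\|_{L^2}^2$ modulo terms absorbed by the viscous dissipation and by nonlinear terms. Writing $\mathcal{E}_l^H(t)$ for the resulting functional (equivalent to $\|\nabla^l(\varrho_1^H,u_1^H,\varrho_2^H,u_2^H)\|_{L^2}^2$ for $\varepsilon$ small, using $\|\nabla^{l-1}u_i^H\|_{L^2}\le\|\nabla^l u_i^H\|_{L^2}$) and $\mathcal{D}_l^H(t)\sim\|\nabla^l(\varrho_1^H,\varrho_2^H)\|_{L^2}^2+\|\nabla^{l+1}(u_1^H,u_2^H)\|_{L^2}^2$ for the dissipation, the frequency localization $|\xi|\ge1$ gives $\mathcal{D}_l^H(t)\gtrsim\mathcal{E}_l^H(t)$ \emph{without} any low--frequency remainder, and one arrives at
\begin{equation}\nonumber
 \frac{d}{dt}\mathcal{E}_l^H(t)+c\,\mathcal{E}_l^H(t)\lesssim\mathcal{R}(t),
\end{equation}
where $\mathcal{R}(t)$ gathers the nonlinear contributions.

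The core of the argument is the estimate of $\mathcal{R}(t)$. For the trouble terms one integrates by parts once, obtaining expressions of the form $\int\nabla^{l-1}\!\big(\tfrac{\mu_i Z\varrho_i}{\rho_i}\Delta u_i\big)\cdot\nabla^{l+1}u_i^H\,dx$; the piece in which all derivatives fall on $u_i$ is bounded by $\|\varrho_i\|_{L^\infty}\|\nabla^{l+1}u_i\|_{L^2}\|\nabla^{l+1}u_i^H\|_{L^2}$, which (since $\|\varrho_i\|_{L^\infty}\lesssim\|\varrho_i\|_{H^l}\lesssim\delta_0$) is absorbed into $\mathcal{D}_l^H(t)$ up to $\|\varrho_i\|_{L^\infty}\|\nabla^{l+1}u_i^L\|_{L^2}\|\nabla^{l+1}u_i^H\|_{L^2}$, where now $\|\varrho_i\|_{L^\infty}\lesssim\mathcal{M}(t)(1+t)^{-3/2}$ by Gagliardo--Nirenberg and \eqref{M1}, $\|\nabla^{l+1}u_i^L\|_{L^2}\lesssim\|\nabla^l u_i^L\|_{L^2}$ is the slow low--frequency quantity, and Young's inequality bounds this by $\varepsilon\mathcal{D}_l^H(t)+C(C_0+\mathcal{M}^2(t))^4(1+t)^{-3-l}$. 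The remaining pieces of the trouble terms, together with the convective terms $u_i\cdot\nabla u_i$, the pressure--type terms $\big(\tfrac{P_i'(\rho_i)}{\rho_i}-\cdots\big)\nabla\varrho_i$ and the electric terms (which enter $N^\varrho_i,N^u_i$ only through $\varrho_i\nabla\phi$--type products, $\nabla\phi$ being one order smoother than $\varrho$), are all treated with Lemmas \ref{1interpolation}, \ref{es-product}, \ref{infty} and \eqref{M1}: each contribution is dominated by $\varepsilon\big(\mathcal{D}_l^H(t)+\mathcal{E}_l^H(t)\big)+C(C_0+\mathcal{M}^2(t))^2(1+t)^{-\frac32-l}$, the decisive point being that whenever a slow low--frequency norm $\|\nabla^l u_i^L\|_{L^2}$ (or $\|\nabla^l m_i^L\|_{L^2}$, $\|\nabla^l(\nabla\phi)^L\|_{L^2}$) occurs it is multiplied by an $L^\infty$--type factor such as $\|\varrho_i\|_{L^\infty}$, $\|\nabla\varrho_i\|_{L^\infty}$ or $\|u_i\|_{L^\infty}$ decaying at least like $(1+t)^{-3/2}$, and never by a bare constant. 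Hence $\mathcal{R}(t)\lesssim\varepsilon(\mathcal{D}_l^H(t)+\mathcal{E}_l^H(t))+C(C_0+\mathcal{M}^2(t))^2(1+t)^{-\frac32-l}$; choosing $\varepsilon,\delta_0$ small to absorb the first term and applying Gronwall's inequality with $\mathcal{E}_l^H(0)\lesssim C_0^2$ yields $\mathcal{E}_l^H(t)\lesssim e^{-ct}C_0^2+(C_0+\mathcal{M}^2(t))^2(1+t)^{-\frac32-l}$, i.e.\ $\|\nabla^l(\varrho_1^H,u_1^H,\varrho_2^H,u_2^H)(t)\|_{L^2}\lesssim(C_0+\mathcal{M}^2(t))(1+t)^{-\frac34-\frac l2}$.

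Finally I would transfer back to the variables in the statement. Since $m_i^H$ differs from $\tfrac1Z u_i^H$ (resp.\ $u_i^H$) only by the high--frequency part of $\varrho_i u_i$, the product estimates give $\|\nabla^l m_i^H\|_{L^2}\lesssim\|\nabla^l u_i^H\|_{L^2}+\|\nabla^l(\varrho_i u_i)\|_{L^2}$ with $\|\nabla^l(\varrho_i u_i)\|_{L^2}\lesssim\|\varrho_i\|_{L^\infty}\|\nabla^l u_i\|_{L^2}+\|u_i\|_{L^\infty}\|\nabla^l\varrho_i\|_{L^2}+\cdots\lesssim\mathcal{M}^2(t)(1+t)^{-\frac32-\frac l2}$ by Gagliardo--Nirenberg and \eqref{M1}, which is faster than needed; likewise $(\nabla\phi)^H=\nabla\Delta^{-1}(Z\varrho_1^H-\varrho_2^H)$ obeys $\|\nabla^l(\nabla\phi)^H\|_{L^2}\le\|\nabla^l(Z\varrho_1^H-\varrho_2^H)\|_{L^2}$ on $\{|\xi|\ge1\}$. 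Combining these with the bound on $\|\nabla^l(\varrho_1^H,\varrho_2^H,u_1^H,u_2^H)\|_{L^2}$ gives \eqref{high-fre-varrho-m1}. The hard part of the proof is exactly the bookkeeping described above: one must check that the slow low--frequency quantities always appear with a fast--decaying prefactor, and it is precisely this requirement that dictates the passage to the velocity formulation and to the high--frequency projection.
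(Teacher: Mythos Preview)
Your strategy is essentially the paper's: pass to the velocity formulation~\eqref{2eq}, project onto high frequencies, couple the $\nabla^l$--energy with the cross functional $\sum_i\langle\nabla^l\varrho_i^H,\nabla^{l-1}u_i^H\rangle$ to recover density dissipation, use $\|f^H\|_{L^2}\le\|\nabla f^H\|_{L^2}$ to obtain a Gronwall inequality, and convert back to $m_i^H,(\nabla\phi)^H$ at the end. Two remarks are in order.

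First, your treatment of the nonlinear remainder is more elaborate than necessary. After arriving at the differential inequality, the paper does \emph{not} split the right--hand side into low-- and high--frequency pieces; it simply bounds it by
\[
C\big(\|(\varrho,u)\|_{1,\infty}+\|\nabla(\varrho,u)\|_{1,3}\big)\,\|\nabla^l(\varrho,u)\|_{L^2}^2
\]
and evaluates this via the definition of $\mathcal{M}(t)$ alone (no appeal to Lemma~\ref{le-es-loworder1}). The resulting rate is $(1+\tau)^{-\frac92(\frac1p-\frac12)-\frac34-l}$, and the hypothesis $p\le\frac32$ guarantees $\frac92(\frac1p-\frac12)+\frac34\ge\frac32$, which is exactly what Gronwall needs. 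Your low/high bookkeeping works too, but it is not the crux of the argument.

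Second, a small correction: your claim that $\|u_i\|_{L^\infty}$ decays like $(1+t)^{-3/2}$ is not right for general $p\in[1,\frac32]$. Gagliardo--Nirenberg together with~\eqref{M1} gives only $\|u_i\|_{L^\infty}\lesssim\mathcal{M}(t)(1+t)^{-\frac32(\frac1p-\frac12)-\frac34}$, which for $p=\frac32$ is $(1+t)^{-1}$. The $(1+t)^{-3/2}$ rate is correct for $\|\varrho_i\|_{L^\infty}$. This does not break your argument, because the combination of the (corrected) prefactor with the slow factor $\|\nabla^l u_i\|_{L^2}^2$ still produces a total rate $\ge\frac32+l$ thanks to $p\le\frac32$; but the justification should be phrased accordingly.
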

\begin{proof}
 Applying the operator $\mathfrak{F}^{-1}[(1-\varphi(\xi))\mathfrak{F}(\cdot)]$ to the system \eqref{2eq} gets rise to
 \begin{equation}\label{2eq-H}
  \left\{\begin{array}{lll}
  \partial_t \varrho_1^H + \frac1Z{\rm div}u_1^H = (N^\varrho_1)^H,
  \\
  \partial_t u_1^H + P'_1\left(\frac1Z\right)\nabla \varrho_1^H - Z\nabla \Delta^{-1} (Z\varrho_1^H - \varrho_2^H) -\mu_1Z \Delta u_1^H -\nu_1Z \nabla {\rm div} u_1^H = (N^u_1)^H,
  \\
  \partial_t \varrho_2^H + {\rm div}u_2^H =(N^\varrho_2)^H,
  \\
  \partial_t u_2^H + P'_2(1)\nabla \varrho_2^H + \nabla \Delta^{-1} (Z\varrho_1^H - \varrho_2^H) -\mu_2 \Delta u_2^H -\nu_2 \nabla {\rm div} u_2^H = (N^u_2)^H,
  \\
  (\varrho_1^H, u_1^H, \varrho_2^H, u_2^H)(x,0)= (\varrho_{10}^H, u_{10}^H, \varrho_{20}^H, u_{20}^H)(x)
  \end{array}\right.
 \end{equation}
 By taking $\langle \nabla^l\eqref{2eq-H}_1, \nabla^{l-1}u_1^H\rangle +\langle \nabla^l\varrho_1^H, \nabla^{l-1}\eqref{2eq-H}_2\rangle +\langle \nabla^l\eqref{2eq-H}_3, \nabla^{l-1}u_2^H\rangle +\langle \nabla^l\varrho_2^H, \nabla^{l-1}\eqref{2eq-H}_4\rangle $, we can obtain
 \begin{align}\label{high-es2}
 \begin{split}
  &\frac{d}{dt}\left(\langle \nabla^l\varrho_1^H, \nabla^{l-1}u_1^H\rangle +\langle \nabla^l\varrho_2^H, \nabla^{l-1}u_2^H\rangle\right) + P_1'\left(\frac1Z\right)\|\nabla^l\varrho_1^H\|_{L^2}^2+ P_2'(1)\|\nabla^l\varrho_2^H\|_{L^2}^2+ \|\nabla^l(\nabla\phi)^H\|_{L^2}^2\\
  &= \left\langle \nabla^l\left(- \frac1Z{\rm div}u_1^H +(N^\varrho_1)^H\right), \nabla^{l-1}u_1^H\right\rangle +\langle \nabla^l\left(-{\rm div}u_2^H + (N^\varrho_2)^H\right), \nabla^{l-1}u_2^H\rangle
 \\
  &\qquad +\langle \nabla^l\varrho_1^H, \nabla^{l-1}\left( \mu_1Z \Delta u_1^H +\nu_1Z \nabla{\rm div}u_1^H +(N^u_1)^H\right)\rangle
\\
  &\qquad +
  \langle \nabla^l\varrho_2^H, \nabla^{l-1}\left( \mu_2 \Delta u_2^H +\nu_2 \nabla{\rm div}u_2^H +(N^u_2)^H\right)\rangle
 \\
  &\le \left\langle \nabla^l\left(\frac1Z u_1^H +(\varrho_1u_1)^H\right), \nabla^{l}u_1^H\right\rangle +\langle \nabla^l\left(u_2^H +(\varrho_2u_2)^H\right), \nabla^{l}u_2^H\rangle
  \\
  &\qquad + C\big\|\nabla^l\left(\varrho_1^H, \varrho_2^H\right)\big\|_{L^2} \left(\big\|\nabla^{l+1}\left(u_1^H, u_2^H\right)\big\|_{L^2} +\big\|\nabla^{l-1}\left((N^u_1)^H, (N^u_2)^H\right)\big\|_{L^2}\right)
 \\
  &\le  \frac{P_1'\left(\frac1Z\right)}2\|\nabla^l\varrho_1^H\|_{L^2}^2+ \frac{P_2'(1)}2\|\nabla^l\varrho_2^H\|_{L^2}^2+ C\|\nabla^{l}\left(u_1^H, u_2^H, \nabla u_1^H, \nabla u_2^H\right)\|_{L^2}^2
 \\
  &\qquad +C\left( \big\|\nabla^l\left((\varrho_1u_1)^H, (\varrho_2u_2)^H\right)\big\|_{L^2}^2 +\big\|\nabla^{l-1}\left((N^u_1)^H, (N^u_2)^H\right)\big\|_{L^2}\right)
\\
  &\le  \frac{P_1'\left(\frac1Z\right)}2\|\nabla^l\varrho_1^H\|_{L^2}^2+ \frac{P_2'(1)}2\|\nabla^l\varrho_2^H\|_{L^2}^2 +C\|\nabla^{l+1}\left(u_1^H, u_2^H\right)\|_{L^2}^2
  \\
  &\qquad +C\left(\big\|\nabla^l\left(\varrho_1u_1, \varrho_2u_2\right)\big\|_{L^2}^2 +\big\|\nabla^{l-1}\left(N^u_1, N^u_2\right)\big\|_{L^2}^2\right),
 \end{split}
 \end{align}
 where the inequality $\|f^H\|_{L^2}\le \|f\|_{L^2}$ is used. Since by Lemma \ref{es-product} and Lemma \ref{infty}, we have
 \begin{equation}\label{es-no11}
  \big\|\nabla^l\left(\varrho_1u_1, \varrho_2u_2\right)\big\|_{L^2}
  \le C\|(\varrho_1, u_1, \varrho_2, u_2)\|_{L^\infty}\|\nabla^l(\varrho_1, u_1, \varrho_2, u_2)\|_{L^2},
 \end{equation}
 and
 \begin{equation}\label{es-no12}
  \begin{split}
  &\big\|\nabla^{l-1}\left(N^u_1, N^u_2\right)\big\|_{L^2}
  \\
  &\lesssim \Bigg\|\nabla^{l-1}\left(u_1\cdot \nabla u_1, \left( \frac {P'_1(\rho_1)} {\rho_1}-ZP'_1\left(\frac1Z\right) \right)\nabla \varrho_1, u_2\cdot \nabla u_2, \left( \frac {P'_2(\rho_2)} {\rho_2}-P'_2(1) \right)\nabla \varrho_2\right)\Bigg\|_{L^2}
  \\
  &\qquad +\Bigg\|\nabla^{l-1}\left(\frac {\varrho_1}{\rho_1}\Delta u_1, \frac {\varrho_1}{\rho_1} \nabla {\rm div} u_1, \frac {\varrho_2}{\rho_2}\Delta u_2, \frac {\varrho_2}{\rho_2} \nabla {\rm div} u_2\right)\Bigg\|_{L^2}
  \\
  &\lesssim\|(\varrho_1, u_1, \varrho_2, u_2)\|_{L^\infty}\|\nabla^l(\varrho_1, u_1, \varrho_2, u_2)\|_{L^2} +\|\nabla(\varrho_1, u_1, \varrho_2, u_2)\|_{L^3}\|\nabla^{l-1}(\varrho_1, u_1, \varrho_2, u_2)\|_{L^6}
  \\
  &\qquad +\|(\varrho_1, \varrho_2)\|_{L^\infty}\|\nabla^{l+1}(u_1, u_2)\|_{L^2} + \|\nabla^2(u_1, u_2)\|_{L^3}\|\nabla^{l-1}(\varrho_1, \varrho_2)\|_{L^6}
  \\
  &\lesssim \left(\|(\varrho_1, u_1, \varrho_2, u_2)\|_{L^\infty} +\|\nabla(\varrho_1, u_1, \varrho_2, u_2)\|_{1,3}\right)\|\nabla^l(\varrho_1, u_1, \varrho_2, u_2)\|_{L^2}
  \\
  &\qquad +\|(\varrho_1, \varrho_2)\|_{L^\infty}\left(\|\nabla^{l+1}(u_1^H, u_2^H)\|_{L^2} +\|\nabla^{l}(u_1, u_2)\|_{L^2}\right)
  \\
  &\lesssim \left(\|(\varrho_1, u_1, \varrho_2, u_2)\|_{L^\infty} +\|\nabla(\varrho_1, u_1, \varrho_2, u_2)\|_{1,3}\right)\|\nabla^l(\varrho_1, u_1, \varrho_2, u_2)\|_{L^2}
  \\
  &\qquad +\|(\varrho_1, \varrho_2)\|_{L^\infty}\|\nabla^{l+1}(u_1^H, u_2^H)\|_{L^2},
  \end{split}
 \end{equation}
 where $f = f^L +f^H$ and $\|\nabla f^L\|_{L^2}\lesssim \|f^L\|_{L^2}\lesssim \|f\|_{L^2}$ are used again. Thus plugging \eqref{es-no11} and \eqref{es-no12} into \eqref{high-es2}, we can arrive at
 \begin{equation}\label{high-es3}
  \begin{split}
  &\frac{d}{dt}\left(\langle \nabla^l\varrho_1^H, \nabla^{l-1}u_1^H\rangle +\langle \nabla^l\varrho_2^H, \nabla^{l-1}u_2^H\rangle\right) + \frac{P_1'\left(\frac1Z\right)}2\|\nabla^l\varrho_1^H\|_{L^2}^2+ \frac{P_2'(1)}2\|\nabla^l\varrho_2^H\|_{L^2}^2+ \|\nabla^l(\nabla\phi)^H\|_{L^2}^2
  \\
  &\le  C_3\|\nabla^{l+1}\left(u_1^H, u_2^H\right)\|_{L^2}^2 +C\left(\|(\varrho_1, u_1, \varrho_2, u_2)\|_{L^\infty} +\|\nabla(\varrho_1, u_1, \varrho_2, u_2)\|_{1,3}\right)^2\|\nabla^l(\varrho_1, u_1, \varrho_2, u_2)\|_{L^2}^2
  \end{split}
 \end{equation}
 with some positive constant $C_3$.

 Next by taking $ZP'_1\left(\frac1Z\right)\langle \nabla^l\eqref{2eq-H}_1^H, \nabla^l\varrho_1^H\rangle +\langle \nabla^l\eqref{2eq-H}_2^H, \nabla^lu_1^H\rangle +P'_2(1)\langle \nabla^l\eqref{2eq-H}_3^H, \nabla^l\varrho_1^H\rangle +\langle \nabla^l\eqref{2eq-H}_4^H, \nabla^lu_2^H\rangle$, we have
 \begin{equation}\label{high-es3-new1}
  \begin{split}
  &\frac{d}{dt}\left\{ \frac{ZP_1'\left(\frac1Z\right)}2\|\nabla^l\varrho_1^H\|_{L^2}^2 +\frac12\|\nabla^lu_1^H\|_{L^2}^2
  +\frac{P_2'(1)}2\|\nabla^l\varrho_2^H\|_{L^2}^2 +\frac12\|\nabla^lu_2^H \|_{L^2}^2+ \frac12\|\nabla^l\nabla\phi\|_{L^2}^2 \right\}
  \\
  &\quad+\mu_1\|\nabla^{l+1}u_1^H\|_{L^2}^2 +\nu_1\|\nabla^l{\rm div}u_1^H\|_{L^2}^2+\mu_2 \|\nabla^{l+1}u_2^H\|_{L^2}^2 +\nu_2\|\nabla^l{\rm div}u_2^H\|_{L^2}^2
  \\
  & = ZP_1'\left(\frac1Z\right)\langle \nabla^l (N^\varrho_1)^H, \nabla^l\varrho_1^H\rangle +\langle \nabla^l(N^u_1)^H, \nabla^lu_1^H\rangle +P_2'(1)\langle \nabla^l (N^\varrho_2)^H, \nabla^l\varrho_2^H\rangle +\langle \nabla^l(N^u_2)^H, \nabla^lu_2^H\rangle.
  \end{split}
 \end{equation}
 Here we only estimate the first two terms in the righthand side of \eqref{high-es3-new1} as follows. By taking full use of the properties of the low--frequency and high--frequency decomposition, Proposition \ref{exist-u}, the smallness of $C_0$, the Cauchy inequality and Lemma \ref{1commutator}, we can arrive at
 \begin{equation}\nonumber
  \begin{split}
  &\langle \nabla^l (N^\varrho_1)^H, \nabla^l\varrho_1^H\rangle
  \\
  &= -\left\langle \nabla^l {\rm div}\left(\left(\varrho_1^H +\varrho_1^L\right)u_1 -\left(\varrho_1u_1\right)^L\right), \nabla^l\varrho_1^H\right\rangle
  \\
  &= -\left\langle \nabla^l\left(\nabla \varrho^H \cdot u +\varrho^H {\rm div}u\right) + \nabla^l {\rm div}\left(\varrho_1^Lu_1 -\left(\varrho_1u_1\right)^L\right), \nabla^l\varrho_1^H\right\rangle
  \\
  & =\int_{\mathbb{R}^3}({\rm div}u_1)|\nabla^l\varrho_1^H|^2dx -\left\langle [\nabla^l, u_1]\cdot\nabla\varrho_1^H +\nabla^l\left(\varrho_1^H{\rm div}u_1\right) +\nabla^l{\rm div}\left(\varrho_1^Lu_1 -(\varrho_1u_1)^L\right), \nabla^l\varrho_1^H\right\rangle
  \\
  &\le \|\nabla u_1\|_{L^\infty}\|\nabla^l\varrho_1^H\|_{L^2}^2 +C\left(\|(\varrho_1, u_1)\|_{1, \infty}\|\nabla^l(\varrho_1, u_1)\|_{L^2} +\|\varrho_1\|_{L^\infty}\|\nabla^{l+1}u_1\|_{L^2}\right)
  \|\nabla^l\varrho_1^H\|_{L^2}
  \\
  &\le \|\nabla u_1\|_{H^2}\|\nabla^l\varrho_1^H\|_{L^2}^2 +C\left(\|(\varrho_1, u_1)\|_{1, \infty}\|\nabla^l(\varrho_1, u_1)\|_{L^2} +\|\varrho_1\|_{L^\infty}\|\nabla^{l+1}u_1^H\|_{L^2}\right)
  \|\nabla^l\varrho_1^H\|_{L^2}
  \\
  &\le \|(\varrho_1, u_1)\|_3\|\nabla^l\varrho_1^H\|_{L^2}^2 +C\|\varrho_1\|_{L^\infty}\|\nabla^{l+1}u_1^H\|_{L^2}^2 +C\|(\varrho_1, u_1)\|_{1, \infty}\|\nabla^l(\varrho_1, u_1)\|_{L^2}^2.
  \end{split}
 \end{equation}
 And by \eqref{es-no12}, we can get
 \begin{equation}\nonumber
  \begin{split}
   &\langle \nabla^l (N^u_1)^H, \nabla^lu_1^H\rangle
   \\
   &= -\langle \nabla^{l-1}(N^u_1)^H, \nabla^{l-1}\Delta u_1^H \rangle
   \\
   & \le \|\nabla^{l-1}(N^u_1)^H\|_{L^2}\|\nabla^{l+1}u_1^H\|_{L^2}
   \\
   &\le \frac{\mu_1}4\|\nabla^{l+1}u_1^H\|_{L^2} + C\|\nabla^{l-1}(N^u_1)^H\|_{L^2}^2
   \\
   &\le\left( \frac{\mu_1}4 +\|\varrho_1\|_{L^\infty}^2\right)\|\nabla^{l+1}u_1^H\|_{L^2}^2 +C\left(\|(\varrho_1, u_1)\|_{L^\infty} +\|\nabla(\varrho_1, u_1)\|_{1,3}\right)\|\nabla^l(\varrho_1, u_1)\|_{L^2}^2
  \end{split}
 \end{equation}
 Similarly we can get the estimates for $\langle \nabla^l (N^\varrho_2)^H, \nabla^l\varrho_2^H\rangle$ and $\langle \nabla^l (N^u_2)^H, \nabla^l u_2^H\rangle$. Thus combining these estimates with \eqref{high-es3-new1} gets rise to
 \begin{equation}\label{high-es3-new12}
  \begin{split}
  &\frac{d}{dt}\left\{ \frac{ZP_1'\left(\frac1Z\right)}2\|\nabla^l\varrho_1^H\|_{L^2}^2 +\frac12\|\nabla^lu_1^H\|_{L^2}^2
  +\frac{P_2'(1)}2\|\nabla^l\varrho_2^H\|_{L^2}^2 +\frac12\|\nabla^lu_2^H \|_{L^2}^2+ \frac12\|\nabla^l\nabla\phi\|_{L^2}^2 \right\}
  \\
  &\quad+\frac{\mu_1}2\|\nabla^{l+1}u_1^H\|_{L^2}^2 +\frac{\mu_2}2 \|\nabla^{l+1}u_2^H\|_{L^2}^2\\
  &\le C\|(\varrho_1, u_1, \varrho_2, u_2)\|_3\|\nabla^l(\varrho_1^H, \varrho_2^H)\|_{L^2}^2\\
  &\qquad+C\left(\|(\varrho_1, u_1, \varrho_2, u_2)\|_{1, \infty} +\|\nabla(\varrho_1, u_1, \varrho_2, u_2)\|_{1, 3}\right)\|\nabla^l(\varrho_1, u_1, \varrho_2, u_2)\|_{L^2}^2.
  \end{split}
 \end{equation}
 Now multiplying \eqref{high-es3} with some positive number $C_4=\min\left\{\frac{\mu_1}{4C_3}, \frac{\mu_2}{4C_3}\right\}$, and plus \eqref{high-es3-new12} leads to
 \begin{equation}\label{high-es3-new13}
  \begin{split}
  &\frac{d}{dt}\left\{C_4\langle\nabla^l\varrho_1^H, \nabla^{l-1}u_1^H\rangle + C_4\langle\nabla^l\varrho_2^H, \nabla^{l-1}u_2^H\rangle + \frac{ZP_1'\left(\frac1Z\right)}2\|\nabla^l\varrho_1^H\|_{L^2}^2 +\frac12\|\nabla^lu_1^H\|_{L^2}^2\right.
  \\
  &\qquad\left.+\frac{P_2'(1)}2\|\nabla^l\varrho_2^H\|_{L^2}^2 +\frac12\|\nabla^lu_2^H \|_{L^2}^2+ \frac12\|\nabla^l(\nabla\phi)^H\|_{L^2}^2 \right\}+ \frac{C_4P_1'\left(\frac1Z\right)}4\|\nabla^l\varrho_1^H\|_{L^2}^2
  \\
  &\quad+ \frac{C_4P_2'(1)}4\|\nabla^l\varrho_2^H\|_{L^2}^2+ C_4\|\nabla^l(\nabla\phi)^H\|_{L^2}^2 +\frac{\mu_1}4\|\nabla^{l+1}u_1^H\|_{L^2}^2 +\frac{\mu_2}4 \|\nabla^{l+1}u_2^H\|_{L^2}^2
  \\
  &\le C\left(\|(\varrho_1, u_1, \varrho_2, u_2)\|_{1, \infty} +\|\nabla(\varrho_1, u_1, \varrho_2, u_2)\|_{1, 3}\right)\|\nabla^l(\varrho_1, u_1, \varrho_2, u_2)\|_{L^2}^2,
  \end{split}
 \end{equation}

 Define
 \begin{equation}\nonumber
  \begin{split}
  \mathcal{L}(t)
  =& C_4\langle\nabla^l\varrho_1^H, \nabla^{l-1}u_1^H\rangle + C_4\langle\nabla^l\varrho_2^H, \nabla^{l-1}u_2^H\rangle + \frac{ZP_1'\left(\frac1Z\right)}2\|\nabla^l\varrho_1^H\|_{L^2}^2 +\frac12\|\nabla^lu_1^H\|_{L^2}^2
  \\
  &\quad+\frac{P_2'(1)}2\|\nabla^l\varrho_2^H\|_{L^2}^2 +\frac12\|\nabla^lu_2^H \|_{L^2}^2+ \frac12\|\nabla^l(\nabla\phi)^H\|_{L^2}^2.
  \end{split}
 \end{equation}
 By the Cauchy inequality and the fact that $\|f^H\|_{L^2}\le \|\nabla f^H\|_{L^2}$, we can get the following equivalent relationship
 \begin{equation}\nonumber
  \mathcal{L}(t)
  \approx \|\nabla^l(\varrho_1^H, u_1^H, \varrho_2^H, u_2^H, (\nabla\phi)^H)\|_{L^2}^2.
 \end{equation}
 Hence by using $\|f^H\|_{L^2}\le \|\nabla f^H\|_{L^2}$ again we have from \eqref{high-es3-new13} that for some positive constant $C_5$,
 \begin{equation}\nonumber
  \frac{d}{dt}\mathcal{L}(t) +C_5\mathcal{L}(t)
  \le C\left(\|(\varrho_1, u_1, \varrho_2, u_2)\|_{1, \infty} +\|\nabla(\varrho_1, u_1, \varrho_2, u_2)\|_{1, 3}\right)\|\nabla^l(\varrho_1, u_1, \varrho_2, u_2)\|_{L^2}^2.
 \end{equation}
 Then by the Gronwall inequality and Lemma \ref{s1s2}, we can arrive at
 \begin{equation}\nonumber
  \begin{split}
  L(t)
  &\le e^{-C_5t}\mathcal{L}(0) + \int_0^t e^{-C_5(t-\tau)}C\left(\|(\varrho_1, u_1, \varrho_2, u_2)(\tau)\|_{1, \infty} +\|\nabla(\varrho_1, u_1, \varrho_2, u_2)(\tau)\|_{1, 3}\right)
  \\
  &\qquad\qquad\qquad\qquad\times\|\nabla^l(\varrho_1, u_1, \varrho_2, u_2)(\tau)\|_{L^2}^2d\tau
  \\
  &\le e^{-C_5t}\mathcal{L}(0) + C\int_0^t e^{-C_5(t-\tau)}\|(\varrho_1, u_1, \varrho_2, u_2)(\tau)\|_1^\frac14\|\nabla^2(\varrho_1, u_1, \varrho_2, u_2)(\tau)\|_1^\frac34
  \\
  &\qquad\qquad\qquad\qquad\times\|\nabla^l(\varrho_1, u_1, \varrho_2, u_2)(\tau)\|_{L^2}^2d\tau
  \\
  &\le e^{-C_5t}C_0^2 + C\int_0^t e^{-C_5(t-\tau)}(1+\tau)^{-\frac92\left(\frac1p -\frac12\right) -\frac34 -l}\mathcal{M}^4(\tau)d\tau
  \\
  &\le e^{-C_5t}C_0^2 + C\int_0^t (1+t-\tau)^{-\frac92\left(\frac1p -\frac12\right) -\frac34 -l}(1+\tau)^{-\frac92\left(\frac1p -\frac12\right) -\frac34 -l}\mathcal{M}^4(\tau)d\tau
  \\
  &\le (1+t)^{-\frac92\left(\frac1p -\frac12\right) -\frac34 -l}(C_0^2 +C\mathcal{M}^4(t))
  \\
  &\le C(C_0^2 +\mathcal{M}^4(t))(1+t)^{-\frac32 -l},
  \end{split}
 \end{equation}
 which together with the fact that $\|\nabla^l(m_1^H, m_2^H)\|_{L^2}\lesssim \|(\varrho_1^H, u_1^H, \varrho_2^H, u_2^H)\|_{L^\infty}\|\nabla^l(\varrho_1^H, u_1^H, \varrho_2^H, u_2^H)\|_{L^2}$ yields \eqref{high-fre-varrho-m1}.
\end{proof}

$Proof\ of\ Proposition\ \ref{es-thm-M}$.

 Combining with Lemma \ref{es-fre-op}, Lemma \ref{le-es-loworder1} and Lemma \ref{le-es-highorder1}, the following estimates can be obtained:
 \begin{equation}\label{low-varrho1}
  \begin{split}
  \|(\varrho_1, \varrho_2)\|_{L^2}
  &\le \|(\varrho_1^H, \varrho_2^H)\|_{L^2} +\|(\varrho_1^L, \varrho_2^L)\|_{L^2}
  \\
  &\le \|\nabla^l(\varrho_1^H, \varrho_2^H)\|_{L^2} +\|(\varrho_1^L, \varrho_2^L)\|_{L^2}
  \\
  &\le C(C_0 +K_0 +\mathcal{M}^2(t))(1+t)^{-\frac34},
  \end{split}
 \end{equation}

 \begin{equation}\label{low-m1}
  \begin{split}
  \|(m_1, m_2, \nabla\phi)\|_{L^2}
  &\le \|(m_1^H, m_2^H, (\nabla\phi)^H)\|_{L^2} +\|(m_1^L, m_2^L, (\nabla\phi)^L)\|_{L^2}
  \\
  &\le \|\nabla^l(m_1^H, m_2^H, (\nabla\phi)^H)\|_{L^2} +\|(m_1^L, m_2^L, (\nabla\phi)^L)\|_{L^2}
  \\
  &\le C(C_0 +K_0 +\mathcal{M}^2(t))(1+t)^{-\frac32\left(\frac1p -\frac12\right)},
  \end{split}
 \end{equation}
 \begin{equation}\label{high-varrho1}
  \begin{split}
  \|\nabla^l(\varrho_1, \varrho_2)\|_{L^2}
  &\le \|\nabla^l(m_1^H, m_2^H, (\nabla\phi)^H)\|_{L^2} +\|\nabla^l(m_1^L, m_2^L, (\nabla\phi)^L)\|_{L^2}
  \\
  &\le C(C_0 +K_0 +\mathcal{M}^2(t))(1+t)^{-\frac34 -\frac l2},
  \end{split}
 \end{equation}
 and
 \begin{equation}\label{high-m1}
  \|\nabla^l(m_1, m_2, \nabla\phi)\|_{L^2}
  \le \|\nabla^l(\varrho_1^H, \varrho_2^H)\|_{L^2} +\|\nabla^l(\varrho_1^L, \varrho_2^L)\|_{L^2}
  \le C(C_0 +K_0 +\mathcal{M}^2(t))(1+t)^{-\frac32\left(\frac1p -\frac12\right) -\frac l2}.
 \end{equation}

 Finally, by the definition \eqref{M1} of $\mathcal{M}(t)$, and using \eqref{low-varrho1}--\eqref{high-m1} and the Sobolev interpolation inequality, we can get
 \begin{equation}\nonumber
  \mathcal{M}(t) \le C(C_0 +K_0) + C\mathcal{M}^2(t),
 \end{equation}
 which together with the smallness of $C_0$ and $K_0$ implies that $\mathcal{M}(t) \le C(C_0 +K_0)$. This complete the proof of Proposition \ref{es-thm-M}.

\section{Proof of Lower Decay Estimates}

In this section, we prove the lower decay estimates of the solution and its derivatives to the system \eqref{1eq}.
\begin{Proposition}\label{es-thm-lower}
 Under the assumptions \eqref{id-delta0}, \eqref{k0} for $p=1$ and \eqref{low-as} of Theorem \ref{exist-u}, it holds that for $0\le k\le l$ and some positive constant $C_6$,
 \begin{equation}\nonumber
  \min\left\{\|\nabla^k\varrho_1\|_{L^2}, \|\nabla^km_1\|_{L^2}, \|\nabla^k\varrho_2\|_{L^2}, \|\nabla^km_2\|_{L^2}, \|\nabla^k\nabla\phi\|_{L^2}\right\}
  \geq C_6\delta_0^\frac32(1+t)^{-\frac34 - \frac k2}.
 \end{equation}
\end{Proposition}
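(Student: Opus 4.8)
The plan is to run Duhamel's principle on \eqref{5eq-noli}, split the evolution into its linear part (which carries the lower bound of Proposition \ref{li-low-de-es1}) and its nonlinear part (which turns out to be strictly subordinate), and then lift the resulting zero-- and first--order lower bounds to all orders $2\le k\le l$ by an interpolation trick. Throughout I only need the conclusion ``for any large enough $t$'', which matches the validity range of Proposition \ref{li-low-de-es1}.

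First I would write $U=e^{tB}\ast U_0+\int_0^t e^{(t-\tau)B}\ast\mathcal{N}(\tau)\,d\tau$ and, for each component $f\in\{\varrho_1,n_1,\varrho_2,n_2\}$ and $k=0,1$, use
\begin{equation}\nonumber
 \|\nabla^k f\|_{L^2}\ge\|\nabla^k f^L\|_{L^2}\ge\big\|\nabla^k (e^{tB}\ast U_0)_f^L\big\|_{L^2}-\Big\|\nabla^k\Big(\int_0^t e^{(t-\tau)B}\ast\mathcal{N}\,d\tau\Big)_f^L\Big\|_{L^2}.
\end{equation}
The first term is $\ge C_2\delta_0^{3/2}(1+t)^{-3/4-k/2}$ for large $t$: for $k=0,1$ this is exactly \eqref{low-li-es1}--\eqref{one-low-li-es1}, and in fact the same computation as in the proof of Proposition \ref{li-low-de-es1}, with the weight $|\xi|^{2k}$ inserted, gives it for every $k$. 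The second term I would bound as in the proof of Lemma \ref{es-fre-op}: by \eqref{decay-S-varrho}, \eqref{decay-S-n}, the nonlinear estimates \eqref{es-no-N}--\eqref{es-no-F}, and the already established bound $\mathcal{M}(t)\le C(C_0+K_0)$ of Proposition \ref{es-thm-M}, it is $\lesssim(C_0+K_0)^2(1+t)^{-3/4-k/2}\lesssim\delta_0^2(1+t)^{-3/4-k/2}$. Since $\delta_0^2\ll\delta_0^{3/2}$, shrinking $\delta_0$ makes the nonlinear term at most half of the linear one, giving $\|\nabla^k f\|_{L^2}\ge\frac12 C_2\delta_0^{3/2}(1+t)^{-3/4-k/2}$ for $k=0,1$. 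For the momenta I would use $m_i=-\Lambda^{-1}\nabla n_i-\Lambda^{-1}{\rm div}M_i$, whose two summands are $L^2$--orthogonal, so $\|\nabla^k m_i^L\|_{L^2}\ge\|\nabla^k(\Lambda^{-1}\nabla n_i)^L\|_{L^2}=\|\nabla^k n_i^L\|_{L^2}$ and the bound for $n_i$ applies (here \eqref{low-as} forces $M_{i0}$ to have vanishing low--frequency part, so $M_i^L$ is purely nonlinear and negligible). For $\nabla\phi$ I would use $\Delta\phi=Z\varrho_1-\varrho_2$, i.e. $|\widehat{\nabla\phi}|=|\xi|^{-1}|Z\widehat\varrho_1-\widehat\varrho_2|$; one checks from \eqref{so-max1} that the leading low--frequency term of $Z\widehat\varrho_1-\widehat\varrho_2$ cancels, so the surviving contribution is of order $|\xi|\,e^{-\kappa_1|\xi|^2t}\widehat{n_{20}}$, which after dividing by $|\xi|$ yields the same lower bound as for $n_2$ up to an $O(\delta_0^2)$ remainder.

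For $2\le k\le l$ I would use the Gagliardo--Nirenberg interpolation $\|\nabla g\|_{L^2}\le C\|g\|_{L^2}^{1-1/k}\|\nabla^k g\|_{L^2}^{1/k}$, hence $\|\nabla^k g\|_{L^2}\ge C^{-k}\|\nabla g\|_{L^2}^{k}\|g\|_{L^2}^{-(k-1)}$, applied to each of the five quantities $g\in\{\varrho_1,m_1,\varrho_2,m_2,\nabla\phi\}$. Inserting the first--order lower bound $\|\nabla g\|_{L^2}\gtrsim\delta_0^{3/2}(1+t)^{-5/4}$ just obtained together with the upper bound $\|g\|_{L^2}\lesssim(C_0+K_0)(1+t)^{-3/4}$ furnished by Theorem \ref{1mainth} (equivalently Proposition \ref{es-thm-M}) produces $\|\nabla^k g\|_{L^2}\gtrsim(1+t)^{-3/4-k/2}$ with a strictly positive constant; $C_6$ is then the minimum of these constants over $0\le k\le l$.

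The main obstacle is the second step, namely showing that the nonlinear Duhamel term is genuinely of lower order than the linear one. This hinges on the divergence structure recorded in \eqref{S3} and the decay estimates \eqref{decay-S-varrho}--\eqref{decay-S-n2}, combined with the time--weighted bound $\mathcal{M}(t)\le C(C_0+K_0)$: the nonlinear part then scales like $(C_0+K_0)^2$, which is dominated by the linear part precisely because the normalization $|\mathfrak{F}[\Lambda^{-1}{\rm div}m_{20}]|\ge C\delta_0^{3/2}$ in \eqref{low-as} makes the linear part scale like $\delta_0^{3/2}\gg\delta_0^2$. A secondary technical point is the $\nabla\phi$ component, where the leading low--frequency term of $Z\widehat\varrho_1-\widehat\varrho_2$ vanishes and one must retain the next order in the Taylor expansion of the projectors $\mathbb{P}_i(\xi)$ to see that the surviving coefficient is bounded below.
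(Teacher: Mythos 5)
Your proposal is correct and follows essentially the same route as the paper: write the solution via Duhamel's formula, lower-bound the linear part with Proposition~\ref{li-low-de-es1} (\eqref{low-li-es1}, \eqref{one-low-li-es1}), upper-bound the nonlinear Duhamel integral using \eqref{decay-S-n}--\eqref{decay-S-n2}, \eqref{es-no-N}, and $\mathcal{M}(t)\le C(C_0+K_0)$, absorb the $O(\delta_0^2)$ nonlinear contribution into the $O(\delta_0^{3/2})$ linear one, and then pass from $k=0,1$ to $2\le k\le l$ by the Gagliardo--Nirenberg interpolation $\|\nabla^k g\|_{L^2}\ge C\|\nabla g\|_{L^2}^k\|g\|_{L^2}^{-(k-1)}$. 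You are in fact slightly more careful than the paper in two respects: you make explicit the $L^2$-orthogonality of $\Lambda^{-1}\nabla n_i$ and the incompressible piece (together with the vanishing of $\widehat{M_{i0}}$ on $|\xi|\le\eta$ implied by \eqref{low-as}) to transfer the lower bound from $n_i$ to $m_i$, whereas the paper applies \eqref{low-li-es1} to $m_i^L$ and $(\nabla\phi)^L$ without comment; and you flag that the $\nabla\phi$ component needs the next-order coefficient in the projector expansion \eqref{so-max1} after the leading term of $Z\widehat\varrho_1-\widehat\varrho_2$ cancels, a point the paper also leaves implicit. The only other cosmetic difference is bookkeeping of the nonlinear smallness: the paper factors the Duhamel integrand as $C_0^{1/3}\mathcal{M}^{5/3}(1+\tau)^{-5/4}$, while you use $\mathcal{M}^2(1+\tau)^{-3/2}$ directly; both give an $O(\delta_0^2)(1+t)^{-3/4-k/2}$ remainder and hence the same conclusion.
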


First by using \eqref{4eq}, the lower decay estimate \eqref{low-li-es1} for the linear system and \eqref{es-no-N}, the decay estimate \eqref{decay-S-n2} with $k=0$ on the nonlinear term, Proposition \ref{es-thm-M} and Lemma \ref{s1s2}, we have from \eqref{so-ex} that
\begin{align}
 \notag&\min\left\{\|\varrho_1\|_{L^2}, \|m_1\|_{L^2}, \|\varrho_2\|_{L^2}, \|m_2\|_{L^2}, \|\nabla\phi\|_{L^2}\right\}
 \\
 \notag&\geq \min\left\{\|\varrho_1^L\|_{L^2}, \|m_1^L\|_{L^2}, \|\varrho_2^L\|_{L^2}, \|m_2^L\|_{L^2}, \|(\nabla\phi)^L\|_{L^2}\right\}
 \\
 \notag&\geq C\delta_0^\frac32(1+t)^{-\frac34} -C\left|\int_0^t(1+t-\tau)^{-\frac34}\|\mathcal{N}^L(\tau)\|_{L^1}
 d\tau\right|
 \\
 \notag&\geq C\delta_0^\frac32(1+t)^{-\frac34} -C\int_0^t(1+t-\tau)^{-\frac34}\|(\varrho_1, m_1, \varrho_2, m_2, \nabla\phi)\|_{L^2}\|(\varrho_1, \nabla m_1, \varrho_2, \nabla m_2)\|_{H^1}d\tau
 \\
 \notag&\geq C\delta_0^\frac32(1+t)^{-\frac34} -CC_0^\frac13\int_0^t(1+t-\tau)^{-\frac34}\|(\varrho_1, m_1, \varrho_2, m_2, \nabla\phi)\|_{L^2}^{\frac23}\|(\varrho_1, \nabla m_1, \varrho_2, \nabla m_2)\|_{H^1}d\tau
 \\
 \notag&\geq C\delta_0^\frac32(1+t)^{-\frac34} -C C_0^\frac13 \int_0^t(1+t-\tau)^{-\frac34}(1+\tau)^{-\frac54}
 \mathcal{M}^\frac53(\tau)d\tau
 \\
 \notag&\geq (C\delta_0^\frac32 - CC_0^\frac13(C_0 +K_0)^\frac53)(1+t)^{-\frac34},
\end{align}
this combining with the fact that $C_0, K_0 \le \delta_0$ is small, implies that
\begin{equation}\label{low-nonli-es1}
 \begin{split}
 \min\left\{\|\varrho_1\|_{L^2}, \|m_1\|_{L^2}, \|\varrho_2\|_{L^2}, \|m_2\|_{L^2}, \|\nabla\phi\|_{L^2}\right\}
 \geq C_7\delta_0^\frac32(1+t)^{-\frac34}
 \end{split}
\end{equation}
for some positive constant $C_7$.

As in the proof of \eqref{low-nonli-es1}, by using \eqref{decay-S-n} with $k=1$ and \eqref{one-low-li-es1}, we have from \eqref{so-ex} that
\begin{equation}\nonumber
 \begin{split}
 & \min\left\{\|\nabla\varrho_{1}\|_{L^2}, \|\nabla m_{1}\|_{L^2}, \|\nabla\varrho_{2}\|_{L^2}, \|\nabla m_{2}\|_{L^2}, \|\nabla(\nabla\phi)\|_{L^2}\right\}
 \\
 &\geq \min\left\{\|\nabla\varrho_{1}^L\|_{L^2}, \|\nabla m_{1}^L\|_{L^2}, \|\nabla\varrho_{2}^L\|_{L^2}, \|\nabla m_{2}^L\|_{L^2}, \|\nabla(\nabla\phi)^L\|_{L^2}\right\}
 \\
 &\geq C\delta_0^\frac32(1+t)^{-\frac54} -C\left|\int_0^t(1+t-\tau)^{-\frac54}
 \|\mathcal{N}(\tau)\|_{L^1}d\tau\right|
 \\
 &\geq C\delta_0^\frac32(1+t)^{-\frac54} -CC_0^\frac13\mathcal{M}^\frac53\int_0^t(1+t-\tau)^{-\frac54}(1+\tau)^{-\frac54}d\tau
 \\
 &\geq (C\delta_0^\frac32 -CC_0^\frac13(C_0 +K_0)^\frac53)(1+t)^{-\frac54},
 \end{split}
\end{equation}
which also yields that
\begin{equation}\label{one-low-nonli-es1}
 \begin{split}
 \|\nabla(\varrho_1, u_1, \varrho_2, u_2)\|_{L^2}
 \geq C_8\delta_0^\frac32(1+t)^{-\frac54}
 \end{split}
\end{equation}
for some positive constant $C_8$.

Finally, by using \eqref{low-nonli-es1}, \eqref{one-low-nonli-es1} and the Sobolev interpolation inequality, we can deduce that for $2\le k\le l$,
\begin{equation}\nonumber
 \|\nabla^k(\varrho_1, u_1, \varrho_2, u_2)\|_{L^2}
 \geq C\|\nabla(\varrho_1, u_1, \varrho_2, u_2)\|_{L^2}^k\|(\varrho_1, u_1, \varrho_2, u_2)\|_{L^2}^{-(k-1)}
 \geq C_9\delta_0^\frac32(1+t)^{-\frac34 - \frac k2}
\end{equation}
for some positive constant $C_9$. This finishes the proof of Proposition \ref{es-thm-lower}.

\section{Analytic tools}

We will extensively use the Sobolev interpolation of the Gagliardo--Nirenberg inequality; the proof can be seen in \cite{N3}.
\begin{lemma}\label{1interpolation}
 Let $0\le i, j\le k$, then we have
 \begin{equation}\nonumber
  \|\nabla^i f\|_{L^p}\lesssim \|\nabla^jf\|_{L^q}^{1-a}\| \nabla^k f\|_{L^r}^a
 \end{equation}
 where $a$ belongs to $\left[\frac ik, 1\right]$ and satisfies
 \begin{equation}\nonumber
  \frac{i}{3}-\frac{1}{p}= \left(\frac{j}{3}-\frac{1}{q}\right)(1-a)+ \left(\frac{k}{3}-\frac{1}{r}\right)a.
 \end{equation}

 Especially, while $p=q=r=2$, we have
 \begin{equation}\nonumber
  \|\nabla^if\|_{L^2}\lesssim \|\nabla^jf\|_{L^2}^\frac{k-i}{k-j}
  \|\nabla^kf\|_{L^2}^\frac{i-j}{k-j}.
 \end{equation}
\end{lemma}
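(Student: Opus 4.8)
The plan is to prove the scale--invariant $L^2$ estimate first — the cleanest instance — by a one-line computation on the Fourier side, and then to deduce the general $L^p$ statement by reducing it to the classical Gagliardo--Nirenberg inequality applied to $g=\nabla^j f$. Throughout I would take $f$ smooth with every relevant norm finite and recover the general case by density.

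For $p=q=r=2$, observe that $j\le i\le k$ is already forced once the two exponents $\frac{k-i}{k-j}$ and $\frac{i-j}{k-j}$ are required to be nonnegative; hence $\theta:=\frac{k-i}{k-j}\in[0,1]$, $i=\theta j+(1-\theta)k$, and $|\xi|^{2i}=\bigl(|\xi|^{2j}\bigr)^{\theta}\bigl(|\xi|^{2k}\bigr)^{1-\theta}$ for every $\xi$. By Plancherel's theorem and H\"older's inequality with conjugate exponents $\tfrac1\theta$ and $\tfrac1{1-\theta}$,
\begin{equation}\nonumber
 \|\nabla^i f\|_{L^2}^2=\int_{\mathbb{R}^3}|\xi|^{2i}\,|\widehat f(\xi)|^2\,d\xi\le\Bigl(\int_{\mathbb{R}^3}|\xi|^{2j}|\widehat f|^2\,d\xi\Bigr)^{\theta}\Bigl(\int_{\mathbb{R}^3}|\xi|^{2k}|\widehat f|^2\,d\xi\Bigr)^{1-\theta}=\|\nabla^j f\|_{L^2}^{2\theta}\,\|\nabla^k f\|_{L^2}^{2(1-\theta)}.
\end{equation}
Taking square roots gives the special case with implied constant $1$; this is also the general statement for $a=\frac{i-j}{k-j}$, and the exponent identity $\frac i3-\frac12=(\frac j3-\frac12)(1-a)+(\frac k3-\frac12)a$ is merely $i=\theta j+(1-\theta)k$ rewritten.

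For general $1\le p,q,r\le\infty$ I would set $g=\nabla^j f$ and apply the classical Nirenberg inequality to $g$ with derivative orders $i-j$ on the left and $k-j$ highest: $\|\nabla^{i-j}g\|_{L^p}\lesssim\|\nabla^{k-j}g\|_{L^r}^{a}\,\|g\|_{L^q}^{1-a}$, with $a$ fixed by the same scaling relation. Since $\nabla^{i-j}g=\nabla^i f$, $\nabla^{k-j}g=\nabla^k f$ and $g=\nabla^j f$, this is precisely the asserted bound. The inequality for $g$ is standard: decompose $g=\sum_{m\ge-1}\Delta_m g$, fix a dyadic cutoff $N$, use the Bernstein inequalities to bound each low-frequency block $\Delta_m\nabla^{i-j}g$ with $m\le N$ by a convergent geometric series times $\|g\|_{L^q}$ and each high-frequency block with $m>N$ by a convergent geometric series times $\|\nabla^{k-j}g\|_{L^r}$, and then choose $N$ to balance the two resulting powers of $2^{N}$; the optimal $N$ reproduces the stated value of $a$. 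Alternatively one simply invokes \cite{N3}.

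The only genuine obstacle is the exceptional endpoint behaviour of the Gagliardo--Nirenberg inequality: the two geometric series above converge only under strict sign conditions on the exponents, which can fail in the well-known borderline cases (roughly, $q=\infty$ together with an exact integer matching between the orders and the integrability exponents), where one is pushed to $a=1$ and the estimate degenerates to the Sobolev embedding $\dot W^{k,r}\hookrightarrow\dot W^{i,p}$ and must be argued separately. For the present paper this never occurs: every instance used above has $q=r=2$ in dimension $3$, and a borderline case would require $k-j-\tfrac32$ to be a nonnegative integer, which is impossible since $k$ and $j$ are integers.
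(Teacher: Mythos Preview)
Your argument is correct. The $L^2$ case via Plancherel and H\"older is clean and yields the inequality with constant $1$; the reduction of the general case to the classical Gagliardo--Nirenberg inequality applied to $g=\nabla^j f$ is legitimate (it tacitly uses $j\le i$, which is implicit in the intended use of the lemma), and your Littlewood--Paley sketch together with the endpoint discussion is standard.

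By comparison, the paper supplies no proof at all: the lemma is simply recorded as a tool and the reader is referred to Nirenberg \cite{N3}. So your write-up goes well beyond what the paper does; there is nothing to compare at the level of method, since the paper treats the result as a black box.
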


To estimate the product of two functions, we shall record the following estimate, cf. \cite{J}:
\begin{lemma}\label{es-product}
 It holds that for $k\geq0$,
 \begin{equation}\nonumber
  \|\nabla ^k(gh)\|_{L^{p_0}} \lesssim \|g\|_{L^{p_1}}\|\nabla^kh\|_{L^{p_2}} +\|\nabla^kg\|_{L^{p_3}}\|h\|_{L^{p_4}}.
 \end{equation}
 Here $p_0, p_2, p_3\in (1, \infty)$ and
 \begin{equation}\nonumber
  \frac1{p_0} = \frac1{p_1} +\frac1{p_2} = \frac1{p_3} +\frac1{p_4}.
 \end{equation}
\end{lemma}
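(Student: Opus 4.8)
The plan is to reduce the estimate to the Gagliardo--Nirenberg inequality of Lemma \ref{1interpolation} by combining the Leibniz rule with H\"older's inequality. Starting from the pointwise bound $|\nabla^k(gh)|\lesssim\sum_{j=0}^{k}|\nabla^j g|\,|\nabla^{k-j}h|$, valid for every nonnegative integer $k$, it suffices to bound $\|\,|\nabla^j g|\,|\nabla^{k-j}h|\,\|_{L^{p_0}}$ for each $0\le j\le k$. The two endpoint indices are handled by H\"older directly: since $\frac1{p_0}=\frac1{p_1}+\frac1{p_2}$ one has $\|g\,\nabla^k h\|_{L^{p_0}}\le\|g\|_{L^{p_1}}\|\nabla^k h\|_{L^{p_2}}$, and since $\frac1{p_0}=\frac1{p_3}+\frac1{p_4}$ one has $\|(\nabla^k g)\,h\|_{L^{p_0}}\le\|\nabla^k g\|_{L^{p_3}}\|h\|_{L^{p_4}}$.

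For an interior index $1\le j\le k-1$ I would apply H\"older with exponents $a,b$ defined by
\begin{equation}\nonumber
 \frac1a=\frac{k-j}{k\,p_1}+\frac{j}{k\,p_3},\qquad
 \frac1b=\frac{k-j}{k\,p_2}+\frac{j}{k\,p_4},
\end{equation}
which satisfy $\frac1a+\frac1b=\frac{k-j}{k}\bigl(\frac1{p_1}+\frac1{p_2}\bigr)+\frac jk\bigl(\frac1{p_3}+\frac1{p_4}\bigr)=\frac1{p_0}$, so that $\|\,|\nabla^j g|\,|\nabla^{k-j}h|\,\|_{L^{p_0}}\le\|\nabla^j g\|_{L^a}\|\nabla^{k-j}h\|_{L^b}$. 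The exponent $a$ is chosen precisely so that interpolating $\nabla^j g$ between $g\in L^{p_1}$ and $\nabla^k g\in L^{p_3}$ via Lemma \ref{1interpolation} with interpolation parameter $\frac jk$ has the correct scaling, namely $\frac j3-\frac1a=\frac jk\bigl(\frac k3-\frac1{p_3}\bigr)-\frac{k-j}{k}\cdot\frac1{p_1}$; symmetrically $b$ is tuned so that $\nabla^{k-j}h$ interpolates between $h\in L^{p_4}$ and $\nabla^k h\in L^{p_2}$ with parameter $\frac{k-j}{k}$. This yields
\begin{equation}\nonumber
 \|\,|\nabla^j g|\,|\nabla^{k-j}h|\,\|_{L^{p_0}}
 \lesssim \bigl(\|g\|_{L^{p_1}}\|\nabla^k h\|_{L^{p_2}}\bigr)^{\frac{k-j}{k}}\bigl(\|\nabla^k g\|_{L^{p_3}}\|h\|_{L^{p_4}}\bigr)^{\frac jk},
\end{equation}
and Young's inequality with conjugate exponents $\frac{k}{k-j}$ and $\frac kj$ absorbs the right-hand side into $\|g\|_{L^{p_1}}\|\nabla^k h\|_{L^{p_2}}+\|\nabla^k g\|_{L^{p_3}}\|h\|_{L^{p_4}}$. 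Summing over the finitely many indices $j=0,\dots,k$ then gives the lemma.

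The only genuine point to verify---and the main, albeit mild, obstacle---is that Lemma \ref{1interpolation} really applies to each interior term, i.e.\ that $a$ and $b$ lie in the admissible range and that the scaling identity holds with the stated interpolation parameter. This is exactly where the hypothesis $p_0,p_2,p_3\in(1,\infty)$ enters: from $\frac1{p_0}=\frac1{p_1}+\frac1{p_2}=\frac1{p_3}+\frac1{p_4}$ one first deduces $p_1,p_4\in(1,\infty]$, and then, since $\frac1a$ is a convex combination of $\frac1{p_1}$ and $\frac1{p_3}$ with $p_3\in(1,\infty)$, one gets $a\in(1,\infty)$ (with $a\ge p_3$ in the limiting case $p_1=\infty$); symmetrically $b\in(1,\infty)$ using $p_2\in(1,\infty)$. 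Since the top-order spaces $L^{p_3}$ and $L^{p_2}$ are finite and the interpolation parameters $\frac jk$ and $\frac{k-j}{k}$ are strictly interior to $[0,1]$ for $1\le j\le k-1$, no borderline exception to the Gagliardo--Nirenberg inequality is encountered, and the short scaling computation above closes the estimate. Alternatively, this is the integer-order case of the Kato--Ponce product inequality and may simply be quoted from \cite{J}.
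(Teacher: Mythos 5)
Your proof is correct, and it proceeds along a genuinely different (and more informative) route than the paper, which offers no argument at all for this lemma: the authors simply record the estimate with a pointer to \cite{J}, whereas you derive it from first principles by combining the Leibniz rule, H\"older's inequality with carefully tuned intermediate exponents, the Gagliardo--Nirenberg interpolation of Lemma~\ref{1interpolation}, and Young's inequality. The scaling arithmetic checks out: your exponents $a,b$ do satisfy $\frac1a+\frac1b=\frac1{p_0}$, the parameter $\frac jk$ is exactly the one forced by the Gagliardo--Nirenberg scaling identity when interpolating $\nabla^j g$ between $g\in L^{p_1}$ and $\nabla^k g\in L^{p_3}$ (and symmetrically $\frac{k-j}{k}$ for $h$), and the resulting geometric mean is absorbed by Young's inequality with conjugate exponents $\frac{k}{k-j}$ and $\frac kj$. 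Your deduction that $p_1,p_4\in(1,\infty]$ and hence $a,b\in(1,\infty)$ is also sound, precisely because $p_2,p_3\in(1,\infty)$ put positive weight on a finite exponent in the convex combinations defining $\frac1a$ and $\frac1b$. The one borderline case of Gagliardo--Nirenberg you might have cited explicitly is the exclusion of the endpoint $a=1$ when $1<r<\infty$ and $m-j-\frac nr$ is a nonnegative integer; since your interpolation parameters lie strictly in $(0,1)$ for $1\le j\le k-1$, that exclusion never bites, which is exactly the point of your final observation. What the paper's approach buys is brevity; what yours buys is a self-contained verification and a clear view of precisely where the hypotheses $p_0,p_2,p_3\in(1,\infty)$ are used.
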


Thus we can easily deduce from Lemma \ref{es-product} the following commutator estimate:
\begin{lemma}\label{1commutator}
 Let $f$ and $g$ be smooth functions belonging to $H^k\cap L^\infty$ for any integer $k\ge1$ and  define the commutator
 \begin{equation}\nonumber
  [\nabla ^k,f]g=\nabla ^k(fg)-f\nabla ^kg.
 \end{equation}
 Then we have
 \begin{equation}\nonumber
  \|[\nabla ^k,f]g\|_{L^{p_0}} \lesssim \|\nabla  f\|_{L^{p_1}}\|\nabla ^{k-1}g\|_{L^{p_2}}+\|\nabla ^k f\|_{L^{p_3}}\| g\|_{L^{p_4}}.
 \end{equation}
 Here $p_i(i=0,1,2,3,4)$ are defined in Lemma \ref{es-product}.
\end{lemma}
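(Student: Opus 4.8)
The plan is to obtain Lemma~\ref{1commutator} directly from the Leibniz rule together with the product estimate of Lemma~\ref{es-product} and the Gagliardo--Nirenberg interpolation of Lemma~\ref{1interpolation}. The case $k=1$ is trivial, since then $[\nabla,f]g=(\nabla f)g$; so assume $k\ge 2$. Expanding $\nabla^k(fg)$ by the Leibniz rule and cancelling the term $f\nabla^kg$, we may write $[\nabla^k,f]g$ as a finite linear combination, with universal combinatorial coefficients, of products $(\nabla^jf)(\nabla^{k-j}g)$ with $1\le j\le k$; since there are only $k$ of these, it suffices to bound each of them by the right-hand side of the claimed inequality.

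For the two extreme indices one just uses H\"older's inequality together with the hypotheses $\tfrac1{p_0}=\tfrac1{p_1}+\tfrac1{p_2}=\tfrac1{p_3}+\tfrac1{p_4}$: the term $j=1$ gives $\|(\nabla f)(\nabla^{k-1}g)\|_{L^{p_0}}\le\|\nabla f\|_{L^{p_1}}\|\nabla^{k-1}g\|_{L^{p_2}}$, and the term $j=k$ gives $\|(\nabla^kf)g\|_{L^{p_0}}\le\|\nabla^kf\|_{L^{p_3}}\|g\|_{L^{p_4}}$. For an intermediate index $2\le j\le k-1$ I would first split by H\"older, $\|(\nabla^jf)(\nabla^{k-j}g)\|_{L^{p_0}}\le\|\nabla^jf\|_{L^{\alpha}}\|\nabla^{k-j}g\|_{L^{\beta}}$ with $\tfrac1{p_0}=\tfrac1{\alpha}+\tfrac1{\beta}$, and then interpolate via Lemma~\ref{1interpolation}: applied to $\nabla f$ (writing $\nabla^jf=\nabla^{j-1}(\nabla f)$, interpolated between $\nabla f\in L^{p_1}$ and $\nabla^{k-1}(\nabla f)=\nabla^kf\in L^{p_3}$) it gives $\|\nabla^jf\|_{L^{\alpha}}\lesssim\|\nabla f\|_{L^{p_1}}^{1-a}\|\nabla^kf\|_{L^{p_3}}^{a}$ with $a=\tfrac{j-1}{k-1}$, and applied to $g$ (interpolated between $g\in L^{p_4}$ and $\nabla^{k-1}g\in L^{p_2}$) it gives $\|\nabla^{k-j}g\|_{L^{\beta}}\lesssim\|g\|_{L^{p_4}}^{1-b}\|\nabla^{k-1}g\|_{L^{p_2}}^{b}$ with $b=\tfrac{k-j}{k-1}$. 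Since $a+b=1$, multiplying these bounds yields $\big(\|\nabla f\|_{L^{p_1}}\|\nabla^{k-1}g\|_{L^{p_2}}\big)^{1-a}\big(\|\nabla^kf\|_{L^{p_3}}\|g\|_{L^{p_4}}\big)^{a}$, and Young's inequality $X^{1-a}Y^{a}\le X+Y$ finishes the estimate of this term; summing over $j$ completes the proof.

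The step I expect to need the most care is the exponent bookkeeping for the intermediate terms: one must confirm that the exponents $\alpha,\beta$ forced by the scaling identities of Lemma~\ref{1interpolation} for the interpolation parameters $a=\tfrac{j-1}{k-1}$ and $b=1-a$ actually satisfy $\tfrac1{\alpha}+\tfrac1{\beta}=\tfrac1{p_0}$, so that the H\"older splitting is legitimate (and that $\alpha,\beta$ lie in admissible ranges and each interpolation is used at its natural value). A short computation confirms this: adding the two scaling relations, the derivative counts $\tfrac{j-1}{3}$ and $\tfrac{k-j}{3}$ combine to $\tfrac{k-1}{3}$, while the remaining contributions reduce — using precisely $\tfrac1{p_1}+\tfrac1{p_2}=\tfrac1{p_3}+\tfrac1{p_4}=\tfrac1{p_0}$ — to $(1-a)\big(\tfrac1{p_0}-\tfrac{k-1}{3}\big)+a\big(\tfrac1{p_0}-\tfrac{k-1}{3}\big)=\tfrac1{p_0}-\tfrac{k-1}{3}$, so the sum is $\tfrac1{p_0}$ as required. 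An equivalent route, closer to the remark preceding the lemma, is to induct on $k$ via the identity $[\nabla^k,f]g=\nabla^{k-1}\big((\nabla f)g\big)+[\nabla^{k-1},f](\nabla g)$, applying Lemma~\ref{es-product} to the first summand and the inductive hypothesis (together with the same interpolation for its intermediate contributions) to the second.
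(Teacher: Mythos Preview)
Your argument is correct and is the standard Kato--Ponce-type proof: Leibniz expansion, H\"older for the extreme terms, Gagliardo--Nirenberg interpolation (Lemma~\ref{1interpolation}) for the intermediate terms with parameters $a=\tfrac{j-1}{k-1}$, $b=1-a$, and then Young's inequality. Your exponent computation showing $\tfrac1\alpha+\tfrac1\beta=\tfrac1{p_0}$ is right; in fact one finds explicitly $\tfrac1\alpha=\tfrac{b}{p_1}+\tfrac{a}{p_3}$ and $\tfrac1\beta=\tfrac{a}{p_4}+\tfrac{b}{p_2}$, which also confirms that $\alpha,\beta$ lie between the given endpoint exponents and are admissible.

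As for comparison with the paper: the paper does not actually write out a proof of this lemma. It simply asserts that the commutator estimate ``can easily be deduced from Lemma~\ref{es-product}''. Your direct Leibniz-plus-interpolation argument is the natural way to fill in the details; the inductive alternative you sketch at the end (using $[\nabla^k,f]g=\nabla^{k-1}((\nabla f)g)+[\nabla^{k-1},f](\nabla g)$ and applying Lemma~\ref{es-product} to the first summand) is closer in spirit to the paper's one-line remark, but it ultimately requires the same interpolation to close. Either route is fine.
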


Next, to estimate the $L^2$--norm of the spatial derivatives of some smooth function $F(f)$, we shall introduce some estimates which follow from Lemma \ref{1interpolation} and Lemma \ref{es-product}:
\begin{lemma}\label{infty}
 Let $F(f)$ be a smooth function of $f$ with bounded derivatives of any order and $f$ belong to $H^k$ for any integer $k\ge3$ , then we have
 \begin{equation}\nonumber
  \|\nabla ^k(F(f))\|_{L^2} \lesssim \sup_{0\le i\le k}\|F^{(i)}(f)\|_{L^\infty}
  \left(\sum_{j=2}^k\|f\|_{L^2}^{j-1-\frac{3(j-1)}{2k}}\|\nabla ^kf\|_{L^2}^{1+\frac{3(j-1)}{2k}}+ \|\nabla ^kf\|_{L^2}\right).
 \end{equation}

 Moreover, if $f$ has the lower and upper bounds, and $\|f\|_k\le 1$, we have
 \begin{equation}\nonumber
  \|\nabla ^k(F(f))\|_{L^2} \lesssim \|\nabla ^kf\|_{L^2}.
 \end{equation}
\end{lemma}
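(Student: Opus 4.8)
The plan is to expand $\nabla^k\big(F(f)\big)$ by the higher-order chain rule (Fa\`a di Bruno's formula) and then estimate the resulting multilinear expressions by Gagliardo--Nirenberg interpolation. For $k\ge1$ one writes
\[
\nabla^k\big(F(f)\big)=\sum_{j=1}^{k}\ \sum_{\substack{\alpha_1+\cdots+\alpha_j=k\\ \alpha_i\ge1}}c_{j,\alpha}\,F^{(j)}(f)\,\nabla^{\alpha_1}f\cdots\nabla^{\alpha_j}f,
\]
with combinatorial constants $c_{j,\alpha}$. Taking $L^2$-norms and factoring out $\|F^{(j)}(f)\|_{L^\infty}\le\sup_{0\le i\le k}\|F^{(i)}(f)\|_{L^\infty}$, the term $j=1$ contributes exactly $\|F'(f)\|_{L^\infty}\|\nabla^k f\|_{L^2}$, which is the last summand in the claimed bound; so it remains to control $\|\nabla^{\alpha_1}f\cdots\nabla^{\alpha_j}f\|_{L^2}$ for $2\le j\le k$. (Alternatively, the same expansion can be generated inductively from $\nabla^k(F(f))=\nabla^{k-1}(F'(f)\nabla f)$ together with the Leibniz estimate of Lemma \ref{es-product}.)

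For a fixed tuple $(\alpha_1,\dots,\alpha_j)$ I would apply H\"older's inequality with exponents $p_1,\dots,p_j\in[2,\infty]$ satisfying $\sum_i p_i^{-1}=\tfrac12$ and then Lemma \ref{1interpolation}, which gives $\|\nabla^{\alpha_i}f\|_{L^{p_i}}\lesssim\|f\|_{L^2}^{1-\theta_i}\|\nabla^k f\|_{L^2}^{\theta_i}$ with the scaling-forced value $\theta_i=\tfrac{\alpha_i}{k}+\tfrac3k\big(\tfrac12-\tfrac1{p_i}\big)$. Summing over $i$ yields $\sum_i\theta_i=1+\tfrac{3(j-1)}{2k}$ and $\sum_i(1-\theta_i)=j-1-\tfrac{3(j-1)}{2k}$, hence
\[
\|\nabla^{\alpha_1}f\cdots\nabla^{\alpha_j}f\|_{L^2}\lesssim\|f\|_{L^2}^{\,j-1-\frac{3(j-1)}{2k}}\,\|\nabla^k f\|_{L^2}^{\,1+\frac{3(j-1)}{2k}},
\]
which is precisely the $j$-th term of the asserted sum; adding the finitely many contributions gives the first inequality. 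One must also check that admissible $p_i$ exist, i.e. that $\theta_i\in[\alpha_i/k,1]$ can be met: since $j\ge2$ forces every $\alpha_i\le k-1$ and $k\ge3$, the lower constraints on $p_i^{-1}$ sum to at most $\tfrac16<\tfrac12$ while the upper constraints sum to $j/2\ge1$, so the target value $\tfrac12$ is always attainable.

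For the second assertion, boundedness of $f$ from above and below makes $F$ and all of its derivatives bounded along $f$, so $\sup_{0\le i\le k}\|F^{(i)}(f)\|_{L^\infty}\lesssim1$, while $\|f\|_k\le1$ gives $\|f\|_{L^2}\le1$ and $\|\nabla^k f\|_{L^2}\le1$. Then in the first inequality each summand obeys $\|f\|_{L^2}^{\,j-1-\frac{3(j-1)}{2k}}\le1$ because the exponent equals $(j-1)\big(1-\tfrac3{2k}\big)\ge0$ for $k\ge2$, and $\|\nabla^k f\|_{L^2}^{\,1+\frac{3(j-1)}{2k}}\le\|\nabla^k f\|_{L^2}$; summing, the whole right-hand side is $\lesssim\|\nabla^k f\|_{L^2}$, as claimed.

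The main obstacle is the exponent bookkeeping in the second step: one has to verify in every configuration $(j;\alpha_1,\dots,\alpha_j)$ that the scaling-determined $\theta_i$ land in the range where Lemma \ref{1interpolation} applies, the only borderline case being $j=2$, $\alpha=(k-1,1)$, where one factor is measured in $L^p$ with $p$ near $2$ and the other in $L^\infty$. Once the exponents are pinned down, the remainder is a routine Fa\`a di Bruno expansion plus a counting argument.
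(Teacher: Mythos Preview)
Your proof is correct and is precisely the argument the paper has in mind: the paper gives no proof of this lemma, only the remark that it ``follow[s] from Lemma~\ref{1interpolation} and Lemma~\ref{es-product},'' and your Fa\`a di Bruno expansion combined with H\"older and Gagliardo--Nirenberg interpolation is exactly how one cashes that remark out. Your exponent bookkeeping (including the verification that admissible $p_i$ exist for every $2\le j\le k$) and your derivation of the second assertion from the first are both sound.
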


\bigskip

\subsection*{Acknowledgments}
Qing Chen's research is supported in part by Natural Science Foundation of Fujian Province (No. 2018J01430). Guochun Wu's research was in part supported by National Natural Science Foundation of China (No. 11701193, 11671086),  Natural Science Foundation of Fujian Province (No. 2018J05005, 2017J01562), Program for Innovative Research Team in Science and Technology in Fujian Province University  Quanzhou High--Level Talents Support Plan (No. 2017ZT012). Yinghui Zhang' research is partially supported by Guangxi Natural Science Foundation (No. 2019JJG110003, 2019AC20214), and National Natural Science Foundation of China (No. 11771150, 11571280, 11301172 and 11226170.)


\begin{thebibliography}{99}

\smallskip

\bibitem{Bella1}
P. Bella, Long time behavior of weak solutions to Navier--Stokes--Poisson system, J. Math. Fluid Mech., {\bf 14} (2012), 279--294.

\bibitem{BWY}
Q. Bie, Q. Wang, Z. Yao, Optimal decay rate for the compressible Navier--Stokes--Poisson system in the critical $L^p$ framework, J. Differential Equations, {\bf 263} (2017), 8391--8417.

\bibitem{C1}
Q. Chen, Optimal time decay rate of the highest derivative of solutions to the compressible Navier--Stokes equations, Acta. Math. Sci., accepted.

\bibitem{CD}
N. Chikami, R. Danchin, On the global existence and time decay estimates in critical spaces for the Navier--Stokes--Poisson system, Math. Nachr., {\bf 290} (2017), 1939--197.

\bibitem{CY}
H. W. Cheng, S. T. Yau, More explicit formulas for the matrix exponential, Linear Algebra and its Applications, {\bf 262} (1) (1997), 131--163.

\bibitem{Dan1}
R. Danchin, Global existence in critical spaces for compressible
Navier--Stokes equations, Invent. Math. {\bf 141} (2000), 579--614.

\bibitem{Dan2}
R. Danchin, Global existence in critical spaces for flows of compressible viscous and heat--conductive gases, Arch. Rational Mech. Anal. {\bf 16} (2001), 1--39.

\bibitem{Dan3}
R. Danchin, Fourier Analysis Methods for PDEs, in: Lecture Notes, November 14 (2005).

\bibitem{Donatelli1}
D. Donatelli, Local and global existence for the coupled Navier--Stokes--Poisson problem, Quart. Appl. Math., {\bf 61} (2) (2003), 345--361.

\bibitem{DUYZ1}
R. J. Duan, S. Ukai, T. Yang, H. J., Zhao, Optimal convergence rate for
the compressible Navier--Stokes equations with potential force. Mathe. Mod. Meth. Appl. Sci., {\bf 17} (2007), 737--758.

\bibitem{DY}
R. Duan, X. Yang, Stability of rarefaction wave and boundary layer for outflow problem on the two--fluid Navier--Stokes--Poisson equations, Commun. Pure Appl. Anal., {\bf 12} (2013), 985--1014.

\bibitem{Ducomet1}
B. Ducomet, E. Feireisl, H.Petzeltov\'a, I. Stra$\check{\text{s}}$kraba,  Global in time
weak solutions for compressible barotropic self--gravitating fluids, Discrete Contin. Dyn. Syst., {\bf 11} (2004), 113--130.

\bibitem{Donatelli2}
D. Donatelli, P. Marcati, A quasineutral type limit for the Navier--Stokes--Poisson system with large data, Nonlinearity, {\bf 21} (2008), 135--148.

\bibitem{Guo2}
Y. Guo, Smooth irrotational flows in the large to the Euler--Poisson system in $R^{3+1}$, Comm. Math. Phys., {\bf 195} (2) (1998), 249--265.

\bibitem{Guo3}
Y. Guo, D. Ionscu and B. Pausader, Global solutions of the Euler--Maxwell two-fluid system in 3D, Ann. Math., {\bf 183} (2) (2016), 377--498.

\bibitem{Guo4}
Y. Guo, B. Pausader, Global smooth ion dynamics in the Euler--Poisson system, Comm. Math. Phys., {\bf 303} (1) (2011), 89--125.

\bibitem{HL}
C. Hao, H. Li, Global existence for compressible Navier--Stokes--Poisson equations in three and higher dimensions, J. Differential Equations, {\bf 246} (2009), 4791--4812.

\bibitem{J}
Ju N. Existence and uniqueness of the solution to the dissipative 2D quasi--geostrophic equations in the Sobolev space. Communications in Mathematical Physics, {\bf 251} (2004), 365--376.

\bibitem{Ju}
Q. Ju, F. Li, H. Li, The quasineutral limit of compressible
Navier--Stokes--Poisson system with heat conductivity and general
initial data, J. Differential Equations, {\bf 247} (2009), 203--224.

\bibitem{LMZ}
H. Li, A. Matsumura, G. Zhang, Optimal decay rate of the compressible Navier--Stokes--Poisson system in $\mathbb{R}^3$, Arch. Ration. Mech. Anal., {\bf 196} (2010), 681--713.

\bibitem{LYZ}
H. Li, T. Yang, C. Zou, Time asymptotic behavior of the bipolar Navier--Stokes--Poisson system, Acta Math. Sci. B, {\bf 29} (2009), 1721--1736.

\bibitem{Mats1}
A. Matsumura, T. Nishida, The initial value problem for the equations of
motion of compressible viscous and heat conductive fluids, Proc. Japan Acad. Ser. A, {\bf 55} (1979), 337--342.

\bibitem{Mats2}
A. Matsumura, T. Nishida, The initial value problem for the equations of
motion of viscous and heat--conductive gases, J. Math. Kyoto Univ. {\bf 20} (1) (1980), 67--04.

\bibitem{N3}
Nirenberg L. On elliptic partial differential equations. Annali della Scuola Normale Superiore di Pisa Classe di Scienze, \textbf{13} (1959), 115--162

\bibitem{Sideris}
T.C. Sideris, B. Thomases, D.H. Wang, Long time behavior of solutions to the 3D compressible Euler equations with damping, {Comm. Partial Differential Equations} {\bf 28} (2003), 795--816.

\bibitem{Tan3}
Z. Tan, Y. Wang, Y. Wang, Stability of steady states of the Navier--Stokes--Poisson equations with non--flat doping profile, SIAM J. Math. Anal., {\bf 47} (1) (2015), 179--209.

\bibitem{TW}
Z. Tan, G. Wu, Global existence for the non--isentropic compressible Navier--Stokes--Poisson system in three and higher dimensions, Nonlinear Anal. Real., {\bf 13} (2012), 650--664.

\bibitem{Wangsu}
S. Wang, S. Jiang, The convergence of the Navier--Stokes--Poisson system to the incompressible Euler equations, Comm. Partial Differential Equations, {\bf 31} (2006), 571--591.

\bibitem{WX}
W. Wang, X. Xu, The decay rate of solution for the bipolar Navier--Stokes--Poisson system, J. Math. Phys., {\bf 55} (2014), 1577--1590.

\bibitem{W1}
Y. Wang, Decay of the Navier--Stokes--Poisson equations, J. Differential Equations, {\bf 253} (2012), 273--297.

\bibitem{WuQin}
Z. Wu, Y. Qin, Optimal decay rate of the bipolar Euler--Poisson system with damping in dimension three, Math. Methods Appl. Sci., {\bf 38} (13) (2015), 2864--2875.

\footnotesize
\bibitem{WuW11}
Z. Wu, W. Wang, Decay of the solution for the bipolar Euler--Poisson system with damping in dimension three, Commun. Math. Sci., {\bf 12} (7) (2014), 1257--1276.

\bibitem{WW1}
Z. Wu, W. Wang, Pointwise Estimates for Bipolar Compressible
Navier--Stokes--Poisson System in Dimension Three, Arch. Rational Mech. Anal., {\bf 226} (2017), 587--638.

\bibitem{WZZ}
G. Wu, Y. Zhang, A. Zhang, Global existence and time decay rates for the 3D bipolar compressible Navier--Stokes--Poisson system with unequal viscosities. Science China Mathematics, (2020), 1--22.

\bibitem{WZZ2}
G. Wu, Y. Zhang, L. Zou, Optimal large--time behavior of the two--phase fluid model in the whole space. SIAM J. Math. Anal., {\bf 52} (2010), 5748--5774.

\bibitem{ZLZ}
G. Zhang, H. Li, C. Zhu, Optimal decay rate of the non--isentropic
compressible Navier--Stokes--Poisson system in $\mathbb{R}^3$, J. Differential Equations, {\bf 250} (2011), 866--891.

\bibitem{ZhangY}
Y. Zhang, Z. Tan, On the existence of solutions to the Navier--Stokes--Poisson equations of a two--dimensional compressible flow, Math. Methods Appl. Sci., {\bf 30} (2007), 305--329.

\bibitem{ZL}
F. Zhou, Y. Li, Convergence rate of solutions toward stationary
solutions to the bipolar Navier--Stokes--Poisson equations in a half
line, Bound. Value Probl., {\bf 124} (2013), 1--20.

\bibitem{Z1}
C. Zou, Large time behaviors of the isentropic bipolar compressible
Navier--Stokes--Poisson system, Acta Math. Sci., {\bf 31} (2011),
1725--1740.

\end{thebibliography}
\end{document}